\documentclass[12pt]{amsart}%
\usepackage{amsfonts}
\usepackage{amsmath}
\usepackage{amssymb}
\usepackage{graphicx}%
\setcounter{MaxMatrixCols}{30}
\providecommand{\U}[1]{\protect\rule{.1in}{.1in}}
\newtheorem{theorem}{Theorem}
\theoremstyle{plain}

\newtheorem{corollary}{Corollary}

\newtheorem{lemma}{Lemma}

\newtheorem{proposition}{Proposition}

\numberwithin{equation}{section}
\begin{document}
\title[Multiplicities for $\theta$--group harmonics]{An analogue of the Kostant-Rallis multiplicity theorem for $\theta$--group harmonics}
\author{Nolan R. Wallach}
\dedicatory{To Roger Howe with admiration.}
\begin{abstract}
The main result in this paper is the generalization of the Kostant-Rallis
multiplicity formula to general $\theta$--groups (in the sense of Vinberg).
The special cases of the two most interesting examples one for E$_{6}$ (three
qubits) and one for E$_{8}$ are given explicit formulas.

\end{abstract}
\maketitle

\section{Introduction}

The purpose of this paper is to give proofs of analogues for a Vinberg
$\theta$--group of two results of Kostant-Rallis \cite{Kostant-Rallis} for the
case when  $\theta$ is an involution of a semi--simple Lie algebra,
$\mathfrak{g}$, over $\mathbb{C}$. To describe the results we need some
notation. Let $V$ denote the $-1$ eigenspace of $\theta$. Set $H$ equal to the
identity component of the centralizer of $\theta$ in the automorphism group of
$\mathfrak{g}$. We use the notation $\mathcal{O}(V)$ for the algebra of
polynomials on $V$ and $\mathcal{O}(V)^{H}$ for the algebra of $H$-invariants.
Finally we set $\mathcal{H}$ equal to the $H$--module of harmonics. The first
result is that $\mathcal{O}(V)$ is isomorphic with $\mathcal{O}(V)^{H}%
\otimes\mathcal{H}$ as a $H$ and $\mathcal{O}(V)^{H}$ module. The second
result the generalization of their formula for the $H$--multiplicities in
$\mathcal{H}$. If $\theta$ an automorphism of $\mathfrak{g}$ of order
$0<m<\infty$ then a \textquotedblleft$\theta$--group\textquotedblright\ is a
pair $(H,V)$ where  $V$ is the eigenspace for a principal $m$--th root of
unity in $\mathfrak{g}$ and $H$ is the identity component of the centralizer of
$\theta$ in the automorphism group of $\mathfrak{g}$ restricted  to $V$. We prefer to use
the term Vinberg pair for $(H,V)$. We will also give a proof of Vinberg's main
theorem (that says that $\mathcal{O}(V)^{H}$ is a polynomial algebra over
$\mathbb{C}$) for these pairs that does not rely on classification based on a
brilliant theorem of Panyushev \cite{Panyushev}. The reasons for this
inclusion are that we have not seen this proof in full detail in the literature. Also the other results of Vinberg
(which do not depend on classification) that are used in our argument are also
needed in the proof of the multiplicity formula. Vinberg's original
argument is complicated whereas both Panyushev's argument and the reduction of
Vinberg's theorem to it are not complicated.  Also in light of the recent deep applications of Vinberg theory in characteristic $p$ we note that the
arguments should extend to arbitrary fields where the Shephard-Todd theorem
applies using deeper \'{e}tale theoretic arguments. Vladimir Popov has communicated to us that 
he and Vinberg published a similar argument in their (much) earlier paper [PV]

Vinberg's paper \cite{Vinberg} was a major addition to the literature of
geometric invariant theory. Even if the reader is not interested in the
results that go beyond \cite{Vinberg}, the listing of the main results of that
paper (with explicit references to the original) might be reason enough to
read this one. The interested reader should also look at the tables in
\cite{Vinberg}. An expanded exposition of the results in of Vinberg quoted and
the newer results in this paper will appear as part of my forthcoming book
\cite{Wallach}.

The paper is organized as follows: Section 2 describes the part of Vinberg's
work that is necessary for our proof of his main theorem. Section 3. is a
description of the Shephard-Todd theorem, Panyushev's result and a proof of
Vinberg's main theorem. Section 4 studies maximal compact subgroups of $H$ and
a description of the Kempf-Ness set for a Vinberg pair. Section 5 contains
several more results of Vinberg and our proof of the freeness theorem above
and the multiplicity formula. I give two important examples of the
multiplicity formula in an effectively computable form. The first is what is
probably the most interesting non-symmetric example for E$_{6}$ (what the
physicists call \textquotedblleft three qutrits\textquotedblright)\ and the
second is the example that was studied extensively by Vinberg and Elashvili
\cite{Elesh-Vin} for E$_{8}$.

We thank Hanspeter Kraft for his patient explanation of Panyushev's result;
Vladimir Popov for pointing out his work with Vinberg [PV] and
the referee for this article for pointing out the paper of Dodak and Kac on what
they call polar representations \cite{Dodak-Kac}. It appears that many of the
preliminary results in this paper are also true in this larger context. Thus
it is likely that variant of the multiplicity formula is true in this context.
Jeb Willenbring's student, Alexander Heaton, will be doing part of his thesis
work on this problem.

This paper is dedicated to my long time friend Roger Howe on the occasion of
his seventieth birthday. We met in Berkeley in 1966 when I was a first year
postdoc and he was a graduate student. I was asked by Cal Moore to teach the
third quarter of his Lie groups course. I was awed by the quality of the
students. In that exceptional group Roger stood out. Although we have written
only one joint paper (joint also with Tom Enright) we have had many deep
mathematical conversations. It was my good fortune that Roger's parents spent
their retirement in San Diego. This led Roger to visit UCSD often. I miss
Roger's parents and I miss his visits.

\section{\label{Resuklts-1}Definitions and some of Vinberg's results}

A $\theta$--group is a pair $(H,V)$ of a finite dimensional vector space over
$\mathbb{C}$, $V$, and a Zariski closed, connected, reductive subgroup of
$GL(V)$ constructed as follows: $\mathfrak{g}$ is a semi--simple Lie algebra
over $\mathbb{C}$, $\theta$ is an automorphism of finite order, $m,$ of
$\mathfrak{g}$, $\zeta$ is a primitive $m$--th root of unity, $V$ is the
$\zeta$ eigenspace for $\theta$, $L$ is the connected subgroup of
$\mathrm{Aut}(\mathfrak{g)}$ with $\mathrm{Lie}(L)=\mathfrak{g}^{\theta}$
(eigenspace for $1$) and $H=L_{|V}$. In this paper we will call $(H,V)$ a
Vinberg pair.

Vinberg's theory reduces the study of the orbit structure and invariant theory
of a Vinberg pair to the case when $\mathfrak{g}$ is simple. In this paper we
will concentrate on this case. The purpose of this section is to give a
listing of the results of Vinberg that we will need in our proof of his main
theorem (all of which are proved without case by case checks). First some
general notation.

If $U$ is a finite dimensional vector space over $\mathbb{C}$ let
$\mathcal{O}(U)$ denote the polynomial functions on $U$ and if $X$ is Zariski
closed in $U$ then $\mathcal{O}(X)=\mathcal{O}(U)_{|X}$ (the regular functions
on $U$). If $G$ is a algebraic group acting on $X$ regularly then we have a
representation of $G$ on $\mathcal{O}(X)$ by%
\[
(gf)(x)=f(g^{-1}x),g\in G,x\in X.
\]
We note that if $f\in\mathcal{O}(X)$ then the span of $Gf$, $Z$, is finite
dimensional and the corresponding action of $G$ on $Z$ is regular. We set
$\mathcal{O}(X)^{G}=\{f\in\mathcal{O}(X)|gf=f\}$. This algebra is finitely
generated over $\mathbb{C}$ and so we can form the maximal spectrum, $X//G$,
of $\mathcal{O}(X)^{G}$ which is an affine variety with $\mathcal{O}%
(X//G)=\mathcal{O}(X)^{G}$. Null cone of $X$ is the set $\{x\in
X|f(x)=f(0),f\in\mathcal{O}(X)^{G}\}$.

Let $(H,V)$ be a Vinberg pair that corresponds to a simple Lie algebra
$\mathfrak{g}$ and automorphism $\theta$. Here are the results

\noindent1. Let $v\in V$. Then $Hv$ is closed in $V$ if and only if $v$ is
semi-simple in $\mathfrak{g}$ \cite{Vinberg}, Proposition 3.\smallskip

\noindent2. $v$ is an element of the null cone of $V$ if and only if $v$ is
nilpotent in $\mathfrak{g}$ \cite{Vinberg}, Proposition 1.\smallskip

A Cartan subspace of $V$ is a subspace, $\mathfrak{a}$, such that

a) every element of $\mathfrak{a}$ is semi-simple in $\mathfrak{g}$,

b) $[\mathfrak{a},\mathfrak{a}]=0$ and

c) $\mathfrak{a}$ is maximal with respect to a) and b).

\noindent3. All Cartan subspaces are conjugate under $H$. Define the common
dimension of the Cartan subspaces to be the rank of the Vinberg pair.
\cite{Vinberg} Theorem 1.\smallskip

\noindent4. If $\mathfrak{a}$ is a Cartan subspace then $H\mathfrak{a}$ is the
union of the closed orbits of $H$ [V] Corollary to Theorem 1.\smallskip

\noindent5. Set for $v\in V$ the variety $X_{v}=\{x\in V|f(x)=f(v),f\in
\mathcal{O}(V)^{H}\}$ ($X_{0}$ is the null cone). If $l$ is the rank of
$(H,V)$ then
\[
\dim X_{v}=\dim V-l.
\]
Furthermore every irreducible component of $X_{v}$ contains an open $H$-orbit,
(indeed, $X_{v}$ is a finite union of $H$--orbits) \cite{Vinberg} Theorems 4,5.

\noindent6. Let $\mathfrak{a}$ be a Cartan subspace of $V$ and let
$N_{H}(\mathfrak{a})=\{h\in H|h\mathfrak{a=a\}}$ and $W(\mathfrak{a}%
)=N_{H}(\mathfrak{a})_{|\mathfrak{a}}$. Then $W_{H}(\mathfrak{a})$ is a finite
group and $V//H$ is isomorphic with $\mathfrak{a}/W_{H}(\mathfrak{a})$ as an
affine variety. \cite{Vinberg} Theorem 7.

\section{Complex reflections, Panyushev's result and Vinberg's main theorem}

Let $U$ be a finite dimensional vector space over $\mathbb{C}$ then a complex
reflection on $U$ is a linear isomorphism of finite order such that $\dim
\ker(U-I)=\dim U-1$. Shephard-Todd in \cite{Shephard-Todd} proved

\begin{theorem}
Let $U$ be a finite dimensional vector space and $G$ a finite group acting on
$U$. Then $G$ is generated by complex reflections if and only if $U/G$ is
isomorphic with $U$ as an affine variety.
\end{theorem}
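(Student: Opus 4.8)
This is the Chevalley--Shephard--Todd theorem, and I would prove the two implications separately, treating $\mathcal{O}(U)^{G}$ as a graded $\mathbb{C}$-algebra (the condition $U/G\cong U$ is equivalent to $\mathcal{O}(U)^{G}$ being a polynomial algebra on $n=\dim U$ homogeneous generators). Throughout I would use the Reynolds operator $f\mapsto f^{\#}=\frac{1}{|G|}\sum_{g\in G}gf$, a degree-preserving $\mathcal{O}(U)^{G}$-linear projection of $\mathcal{O}(U)$ onto $\mathcal{O}(U)^{G}$, available because $\mathrm{char}\,\mathbb{C}=0$. \textbf{For the ``if'' direction}, assume $G$ is generated by complex reflections. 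Let $I\subseteq\mathcal{O}(U)$ be the ideal generated by the homogeneous invariants of positive degree, and choose a minimal set $f_{1},\dots,f_{m}$ of homogeneous invariants generating $I$. A graded Nakayama argument together with the Reynolds operator shows that the $f_{i}$ in fact generate $\mathcal{O}(U)^{G}$ as a $\mathbb{C}$-algebra: any homogeneous invariant of positive degree lies in $I$, hence is an $\mathcal{O}(U)$-combination of the $f_{i}$, and applying $\#$ replaces the coefficients by invariants of strictly smaller degree, so one can induct on degree. It then remains to prove that $m=n$ and that the $f_{i}$ are algebraically independent.

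The heart of the matter is the following reflection lemma: if $\sum_{i}h_{i}f_{i}=0$ with the $h_{i}$ homogeneous and $f_{1}\notin(f_{2},\dots,f_{m})\mathcal{O}(U)^{G}$, then $h_{1}\in I$. I would prove this by induction on $\deg h_{1}$, using that for a reflection $s$ with reflecting hyperplane $\ker\alpha_{s}$ the difference $h-s\,h$ is divisible by the linear form $\alpha_{s}$; applying $s-1$ to the relation and cancelling $\alpha_{s}$ lowers degrees, and averaging over $G$ (which is generated by the $s$) forces $h_{1}\equiv h_{1}^{\#}\pmod{I}$. From this lemma the algebraic independence follows by a Jacobian argument: a minimal-degree relation $P(f_{1},\dots,f_{m})=0$ would, after differentiating in the coordinates of $U$ and invoking the lemma on the resulting syzygy among the $f_{i}$, contradict either the minimality of the generating set or the fact that $\mathcal{O}(U)$ has transcendence degree $n$ over $\mathbb{C}$. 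This interplay of the reflection lemma with the Jacobian is where I expect the real work to lie.

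\textbf{For the ``only if'' direction}, assume $\mathcal{O}(U)^{G}=\mathbb{C}[f_{1},\dots,f_{n}]$ is polynomial with $\deg f_{i}=d_{i}$. I would first extract two numerical identities from the Molien series $P_{G}(t)=\frac{1}{|G|}\sum_{g}\det(1-tg)^{-1}=\prod_{i}(1-t^{d_{i}})^{-1}$ by expanding both sides near $t=1$: comparing the poles of order $n$ gives $|G|=\prod_{i}d_{i}$, and comparing the poles of order $n-1$ gives that the number $N$ of complex reflections in $G$ equals $\sum_{i}(d_{i}-1)$. For the latter one pairs each reflection $g$, with nontrivial eigenvalue $\lambda$, against $g^{-1}$, using $\frac{1}{1-\lambda}+\frac{1}{1-\lambda^{-1}}=1$, so that all non-reflection, non-identity elements contribute poles of order at most $n-2$ and drop out.

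Now let $G'\leq G$ be the subgroup generated by all complex reflections of $G$. By the already-proved direction, $\mathcal{O}(U)^{G'}=\mathbb{C}[f'_{1},\dots,f'_{n}]$ is polynomial with degrees $d'_{i}$, and since $G$ and $G'$ contain exactly the same reflections the identity above gives $\sum_{i}(d_{i}-1)=N=\sum_{i}(d'_{i}-1)$, hence $\sum_{i}d_{i}=\sum_{i}d'_{i}$. Because $\mathcal{O}(U)^{G'}$ is a finitely generated Cohen--Macaulay module over the regular ring $\mathcal{O}(U)^{G}$ of the same Krull dimension, it is free, so the quotient of Poincaré series $Q(t)=P_{G'}(t)/P_{G}(t)=\prod_{i}(1-t^{d_{i}})/\prod_{i}(1-t^{d'_{i}})$ is an honest polynomial with nonnegative integer coefficients, $Q(0)=1$, and $Q(1)=[G:G']$. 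Its top degree is $\sum_{i}d_{i}-\sum_{i}d'_{i}=0$, so $Q$ is the constant $Q(0)=1$; therefore $[G:G']=Q(1)=1$ and $G=G'$ is generated by reflections. The only delicate inputs here are the freeness of $\mathcal{O}(U)^{G'}$ over $\mathcal{O}(U)^{G}$ and the eigenvalue bookkeeping in the Molien expansion, both of which are routine once the ``if'' direction is in hand.
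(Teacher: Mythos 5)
Your proposal is correct in outline, but note that the paper itself never proves this statement: it is quoted as the Shephard--Todd theorem, with only historical remarks --- the original proof of the reflections-imply-polynomial-invariants half was case by case, Chevalley \cite{Chevalley} (with Serre's observation) removed the classification, and the converse half is obtained without classification by reduction to the first half --- and the complete exposition is deferred to \cite{Wallach}. Your sketch is precisely the standard classification-free argument those remarks point to. For the half where $G$ is generated by reflections, your key lemma (a homogeneous syzygy $\sum_i h_i f_i=0$ with $f_1\notin(f_2,\dots,f_m)\mathcal{O}(U)^G$ forces $h_1\in I$, proved by dividing $h-sh$ by $\alpha_s$ and averaging over $G$) together with the differentiation/Euler argument is exactly Chevalley's proof, and $m=n$ follows since $\mathcal{O}(U)$ is integral over $\mathcal{O}(U)^G$, so the invariant field has transcendence degree $n$. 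For the converse, your route --- the reflection subgroup $G'$, the Molien identities $|G|=\prod_i d_i$ and $N=\sum_i(d_i-1)$ via the pairing $\frac{1}{1-\lambda}+\frac{1}{1-\lambda^{-1}}=1$, freeness of $\mathcal{O}(U)^{G'}$ over $\mathcal{O}(U)^{G}$ by the graded Cohen--Macaulay-over-regular argument, and the conclusion $[G:G']=Q(1)=1$ from the degree-zero Hilbert quotient --- is the standard reduction and is sound; your bookkeeping is right because any $g\neq 1$ that is not a reflection has fixed space of codimension at least $2$ and so contributes a pole of order at most $n-2$ at $t=1$. One point you should make explicit in a full write-up: the hypothesis $U/G\cong U$ a priori gives only an abstract $\mathbb{C}$-algebra isomorphism $\mathcal{O}(U)^G\cong\mathbb{C}[y_1,\dots,y_n]$, and one needs the standard graded-local argument (lift a basis of $\mathfrak{m}/\mathfrak{m}^2$ at the irrelevant maximal ideal to homogeneous elements) to conclude that $\mathcal{O}(U)^G$ is polynomial on $n$ \emph{homogeneous} generators of degrees $d_1,\dots,d_n$, which is what your Molien computation actually uses.
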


The \textquotedblleft only if\textquotedblright\ part of this theorem is
usually stated

\begin{theorem}
If $U$ is a finite dimensional vector space and $G\subset GL(U)$ is a finite
subgroup generated by complex reflections then $\mathcal{O}(U)^{G}$ is
generated as an algebra over $\mathbb{C}$ by $\dim U$ algebraically
independent homogenous polynomials.
\end{theorem}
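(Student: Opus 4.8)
The plan is to prove this Chevalley-type statement by exploiting the reflection hypothesis through a single divisibility lemma. Write $S=\mathcal{O}(U)=\mathbb{C}[x_1,\dots,x_n]$ with $n=\dim U$, graded by degree; since $G\subset GL(U)$ acts by degree-preserving algebra automorphisms, $R:=\mathcal{O}(U)^G$ is a graded subalgebra with $R_0=\mathbb{C}$, and I write $R_+=\bigoplus_{d>0}R_d$ for its augmentation ideal. By graded Nakayama I would fix homogeneous $f_1,\dots,f_k\in R_+$ whose images form a basis of $R_+/R_+^2$; these minimally generate $R$ as a $\mathbb{C}$-algebra and also generate the Hilbert ideal $I:=R_+S$ of $S$. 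Because $G$ is finite, $S$ is integral over $R$, so $\operatorname{trdeg}_{\mathbb{C}}R=n$. The theorem then reduces to algebraic independence of the $f_i$: independence forces $k\le n$, while $R=\mathbb{C}[f_1,\dots,f_k]$ forces $\operatorname{trdeg}_{\mathbb{C}}R\le k$, so $k=n$ and $R$ is the desired free polynomial algebra.

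The reflection hypothesis enters only through the observation that a complex reflection $s$ fixes its reflecting hyperplane $\{\ell_s=0\}$ pointwise, so for any $p\in S$ the difference $p-s\cdot p$ vanishes on that hyperplane and is therefore divisible by the linear form $\ell_s$. Using this I would establish Chevalley's lemma: if $u_1,\dots,u_m\in R$ are homogeneous with $u_1\notin(u_2,\dots,u_m)R$, and $a_1,\dots,a_m\in S$ are homogeneous with $\sum_i a_iu_i=0$, then $a_1\in I$. The proof is induction on $\deg a_1$; applying a reflection $s$ and subtracting gives $\sum_i(a_i-s\,a_i)u_i=0$, and dividing the relations $a_i-s\,a_i=\ell_s\,b_i$ through by $\ell_s$ produces a relation of strictly smaller degree to which induction applies, yielding $a_1-s\,a_1\in I$ for every reflection $s$; since $G$ is reflection-generated and $I$ is $G$-stable, $a_1-g\,a_1\in I$ for all $g$, and averaging over $G$ (the Reynolds operator) puts $a_1$ into $R_+\subset I$.

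With the lemma in hand, the algebraic independence proof is the Euler-operator argument. Supposing the $f_i$ satisfy a nontrivial relation, I would grade the polynomial ring in auxiliary variables $y_i$ by $\deg y_i=\deg f_i$, take a nonzero $P(y)$ with $P(f)=0$ of minimal weighted degree, set $g_i=(\partial P/\partial y_i)(f)\in R$, and note that minimality together with characteristic zero forces the ideal $(g_1,\dots,g_k)R$ to be nonzero. After reordering so that $g_1,\dots,g_r$ minimally generate this ideal and writing $g_j=\sum_{i\le r}c_{ji}g_i$ for $j>r$, the chain rule $\sum_i g_i\,\partial f_i/\partial x_l=0$ rewrites as a relation among $g_1,\dots,g_r$ to which Chevalley's lemma applies, giving $\partial f_1/\partial x_l+\sum_{j>r}c_{j1}\,\partial f_j/\partial x_l\in I$ for each $l$. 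Multiplying by $x_l$, summing over $l$, and invoking Euler's identity places $(\deg f_1)f_1$ in the ideal $S_+\,I$ (where $S_+$ denotes the positive-degree part of $S$); applying the Reynolds operator to replace the $S$-coefficients by $R$-coefficients and then reducing modulo $R_+^2$ exhibits $f_1$ as a linear combination of the other generators $f_j$ in $R_+/R_+^2$, contradicting minimality of the generating set.

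The main obstacle is Chevalley's lemma, which is the only place the reflection structure is used and whose inductive proof must be set up carefully so that the division by $\ell_s$ genuinely lowers degree while keeping the coefficients in $S$. The secondary delicate point is the bookkeeping at the end of the independence argument, where one must pass from the $S$-coefficient identity produced by the lemma back to an honest relation in $R_+/R_+^2$ via the Reynolds operator and a weighted-degree count, so that only the equal-degree generators survive and the contradiction with minimality is clean.
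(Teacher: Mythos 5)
Your proposal is correct, and it is precisely the classical Chevalley argument: minimal homogeneous generators of $\mathcal{O}(U)^G$ via graded Nakayama, the divisibility lemma ($\ell_s \mid p - s\cdot p$ for a reflection $s$, proved by induction on $\deg a_1$ with the Reynolds operator closing the argument), and the Euler-operator/chain-rule independence argument with the weighted-degree bookkeeping modulo $R_+^2$ — all the delicate points you flag are the right ones and are handled correctly in the standard way. The paper itself does not prove this theorem but quotes it from Shephard--Todd and Chevalley (noting Serre's observation that Chevalley's order-$2$ hypothesis is unnecessary, which is exactly the generality your lemma already achieves, since the divisibility needs only that $s$ fix its hyperplane pointwise), deferring the full exposition to \cite{Wallach}; so your argument coincides with the proof the paper implicitly relies on.
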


The proof in \cite{Shephard-Todd} of this part of the theorem was by a case by
case check. The \textquotedblleft if\textquotedblright\ part is proved without
case by case checking by reducing to the \textquotedblleft only
if\textquotedblright\ part. Chevalley \cite{Chevalley} gave a proof of this
part without classification under the hypothesis that $G$ is generated by
reflections of order $2$. The literature seems unanimous that Serre pointed
out to him that his proof of the special case proved the full result without
any real change.

The theorem of Panyushev \cite{Panyushev} rests on the full theorem above. In
\cite{Wallach} a complete exposition of the proof of this theorem and the
Shephard-Todd theorem is given.

\begin{theorem}
Let $V$ and $U$ be finite dimensional complex vector spaces and let $H$ be a
reductive group acting on $V$ regularly and let $W\subset GL(U)$ be a finite
subgroup. Let $p:V\rightarrow V//H$ be the natural surjection. Assume that if
$X\subset V//H$ is Zariski closed and of codimension at least $2$ then
$p^{-1}(X)$ is of codimension at least $2$ in $V$. If $V//H$ is isomorphic
with $U/W$ as an affine variety then $W$ is generated by complex reflections.
\end{theorem}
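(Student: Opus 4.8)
The plan is to deduce this from the Shephard-Todd theorem (the "only if" direction stated above) by showing that the codimension hypothesis forces $U/W \cong U$ as an affine variety, from which the conclusion that $W$ is generated by complex reflections follows immediately. So the entire content lies in proving that $U/W$ is isomorphic to affine space $U$, and the given isomorphism $V//H \cong U/W$ is the tool for transferring smoothness/regularity information across.

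\medskip

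First I would recall that for a reductive group $H$ acting on the smooth affine variety $V$, the quotient $V//H$ has rational singularities and, more to the point here, the quotient map $p \colon V \to V//H$ is equidimensional precisely when the codimension condition stated in the hypothesis holds — that is, the hypothesis that preimages of codimension-$\geq 2$ closed sets remain codimension $\geq 2$ is exactly the condition that controls the singular locus of $V//H$. The key classical fact I would invoke is a theorem on the singularities of quotients by reductive groups (in the spirit of the work surrounding the Shephard-Todd and Chevalley theorems, and used in Panyushev's argument): under this equidimensionality/codimension hypothesis, the singular locus of $V//H$ has codimension at least $2$. The plan is therefore: (i) use the codimension hypothesis to conclude $\mathrm{codim}_{V//H}(\mathrm{Sing}(V//H)) \geq 2$; (ii) transport this across the affine isomorphism $V//H \cong U/W$ to conclude $\mathrm{codim}_{U/W}(\mathrm{Sing}(U/W)) \geq 2$.

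\medskip

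The second half of the argument is the purely finite-group part. Having established that the singular locus of $U/W$ has codimension at least $2$, I would factor $W$ through the normal subgroup $W'$ generated by all the complex reflections in $W$. The quotient $U/W'$ is smooth (indeed isomorphic to $U$) by the Shephard-Todd "only if" theorem, and $U/W$ is the further quotient $(U/W')/(W/W')$ where the group $W/W'$ acts without any nontrivial complex reflections (all reflections of $W$ already lie in $W'$). A group acting on a smooth variety without pseudoreflections has a quotient whose singular locus has codimension at least $2$ at every point where the action is nontrivial; equivalently, if $W/W'$ is nontrivial then $U/W$ is singular along a divisor, contradicting (ii). Hence $W/W' $ is trivial, i.e. $W = W'$ is generated by complex reflections, and then by Shephard-Todd $U/W \cong U$ as an affine variety.

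\medskip

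The main obstacle I anticipate is step (i): rigorously extracting the codimension bound on $\mathrm{Sing}(V//H)$ from the stated preimage-codimension hypothesis. This requires a genuine input from geometric invariant theory relating the fibers of $p$ and the local structure of $V//H$ — one must control how the codimension of the singular locus downstairs is governed by the dimensions of the fibers and the codimensions of their images, which is precisely where the assumption is tailored to be used. The finite-group half (the converse reasoning about pseudoreflections and codimension-one singularities) is standard once one knows that a quotient of a smooth variety by a pseudoreflection-free action is singular in codimension one, so I expect it to be routine; the real work is confined to the equivariant-geometry step linking the hypothesis to the singularity estimate on $U/W$.
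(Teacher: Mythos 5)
Your second half rests on a ``standard fact'' that you have stated backwards, and this is fatal. If a nontrivial finite group acts on a smooth variety \emph{without} pseudoreflections, the quotient map is unramified in codimension one and the singular locus of the quotient has codimension \emph{at least} $2$; the quotient is not ``singular along a divisor.'' Concretely, take $W=\{\pm I\}$ acting on $U=\mathbb{C}^{2}$: no element of $W$ is a complex reflection, so $W'=\{1\}$, yet $U/W$ is the quadric cone, singular only at the vertex, which has codimension $2$. So your steps (i)--(ii), even granted, produce no contradiction with $W/W'\neq\{1\}$, and the argument collapses exactly at the point you flagged as ``routine.'' Worse, (i)--(ii) are vacuous: $\mathcal{O}(V)^{H}$ and $\mathcal{O}(U)^{W}$ are integrally closed, so $V//H$ and $U/W$ are normal and hence regular in codimension one (Serre's criterion) with no hypothesis at all. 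Thus your proof never genuinely uses the codimension assumption --- a sure sign something is wrong, since the theorem fails without it: for $H=\mathbb{C}^{\times}$ acting on $V=\mathbb{C}^{3}$ with weights $(1,1,-2)$ the invariants are $x^{2}z,\,xyz,\,y^{2}z$, so $V//H$ is the quadric cone $\cong\mathbb{C}^{2}/\{\pm I\}$ with $\{\pm I\}$ not generated by reflections (and indeed the preimage of the vertex contains the divisor $z=0$, violating the hypothesis).

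For the record, the paper itself does not prove this theorem --- it quotes it from \cite{Panyushev} and defers the detailed exposition to \cite{Wallach} --- so the comparison must be with Panyushev's argument. There your first move (factoring through the normal subgroup $W'$ generated by the reflections of $W$, with $U/W'$ an affine space by Shephard--Todd) is exactly right, but the invariant one transports across $V//H\cong U/W$ is not the codimension of the singular locus; it is the absence of nontrivial finite covers \'{e}tale in codimension one. If $W\neq W'$, then $U/W'\rightarrow U/W$ is such a cover of degree $|W/W'|>1$, since every ramification divisor of $U\rightarrow U/W$ (i.e.\ every reflection hyperplane) is already absorbed in $U\rightarrow U/W'$. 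On the other side, a cover of $V//H$ \'{e}tale away from a closed set of codimension $\geq2$ pulls back along $p$ to a cover of $V$ \'{e}tale off a set of codimension $\geq2$ in $V$ --- this is precisely where the codimension hypothesis enters --- and purity of the branch locus (Zariski--Nagata) together with the simple connectedness of the vector space $V$ forces the pullback, hence the original cover, to be trivial. That contradiction, not a singular-locus estimate, is what eliminates $W/W'$; to repair your write-up you would need to replace (i)--(ii) and the pseudoreflection-free ``divisorial singularity'' claim by this quasi-\'{e}tale cover mechanism.
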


Panyushev's proof of this theorem is an ingenious application of the
Shephard-Todd theorem. We note that it is an exercise to prove that if $H$ is
semi-simple then the codimension assumption is satisfied. Thus 6. in the
previous section implies that if $H$ is semi-simple that $W_{H}(\mathfrak{a})
$ is generated by reflections. This contains all of the cases Vinberg looks at
in his tables for the exceptional groups.

The main theorem in \cite{Vinberg} says

\begin{theorem}
Let $(H,V)$ be a Vinberg pair and let $\mathfrak{a}$ be a Cartan subspace of
$V.$Then $W_{H}(\mathfrak{a})$ is generated by complex reflections of
$\mathfrak{a}$.
\end{theorem}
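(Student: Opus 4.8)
The plan is to deduce the statement from Panyushev's theorem (the third theorem above) applied to the data $U=\mathfrak{a}$ and $W=W_{H}(\mathfrak{a})$, with the reductive group being $H$ acting on $V$. By result 6 of Section~2 we already have an isomorphism of affine varieties $V//H\cong\mathfrak{a}/W_{H}(\mathfrak{a})$, which is exactly the hypothesis that ``$V//H$ is isomorphic with $U/W$'' in Panyushev's theorem. Thus the entire content of the proof reduces to verifying the one remaining hypothesis of that theorem: that the quotient map $p:V\rightarrow V//H$ pulls back closed subsets of codimension at least $2$ to closed subsets of codimension at least $2$. Once this codimension condition is checked, Panyushev's theorem immediately gives that $W_{H}(\mathfrak{a})$ is generated by complex reflections of $\mathfrak{a}$.

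To verify the codimension condition I would first identify the fibers of $p$. For $v\in V$ the fiber $p^{-1}(p(v))$ is precisely the set $X_{v}=\{x\in V\mid f(x)=f(v),\ f\in\mathcal{O}(V)^{H}\}$ of result 5, and since $p$ is surjective (a standard fact for quotients by reductive groups) every fiber of $p$ arises this way. Result 5 then asserts that \emph{every} fiber of $p$ has dimension $\dim V-l$, where $l$ is the rank of $(H,V)$. On the other hand $\dim(V//H)=\dim(\mathfrak{a}/W_{H}(\mathfrak{a}))=\dim\mathfrak{a}=l$ because $W_{H}(\mathfrak{a})$ is finite (result 6). Hence $p$ is equidimensional: all of its fibers have dimension exactly $\dim V-\dim(V//H)$.

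Equidimensionality yields the codimension bound via the theorem on dimension of fibers. It suffices to treat an irreducible closed $X\subseteq V//H$ with $\dim X\le\dim(V//H)-2=l-2$. Let $Z$ be any irreducible component of $p^{-1}(X)$ and consider the dominant morphism $Z\rightarrow\overline{p(Z)}\subseteq X$. Its generic fiber is contained in a fiber of $p$ and so has dimension at most $\dim V-l$; therefore $\dim Z\le\dim\overline{p(Z)}+(\dim V-l)\le\dim X+(\dim V-l)\le(l-2)+(\dim V-l)=\dim V-2$. Thus every component of $p^{-1}(X)$ has codimension at least $2$ in $V$, and the general (possibly reducible) case follows by taking the union over the irreducible components of $X$. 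This is exactly the hypothesis needed, so Panyushev's theorem applies and finishes the argument.

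The step I expect to be the main obstacle is precisely this codimension verification. The remark following Panyushev's theorem observes that the condition is ``an exercise'' when $H$ is \emph{semi-simple}, but a general Vinberg pair only has $H$ reductive (it carries the central torus coming from the grading element), so the semi-simple argument does not apply directly. The essential point, and what makes the proof go through uniformly, is that Vinberg's dimension formula in result 5 already encodes the equidimensionality of $p$ for \emph{all} Vinberg pairs, not merely the semi-simple ones; feeding this into the fiber-dimension estimate replaces any appeal to semi-simplicity. The only care needed is to ensure the surjectivity of $p$ and the identification of its fibers with the $X_{v}$, both of which are standard facts of geometric invariant theory for quotients by reductive groups.
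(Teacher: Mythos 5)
Your proposal is correct and takes essentially the same route as the paper: both arguments reduce the theorem to Panyushev's result via the isomorphism $V//H\cong\mathfrak{a}/W_{H}(\mathfrak{a})$ and verify the codimension-$2$ hypothesis by combining Vinberg's fiber-dimension formula ($\dim X_{v}=\dim V-l$, result 5) with the theorem on the dimension of fibers applied to an irreducible component of $p^{-1}(X)$. The only cosmetic difference is that the paper disposes of the case $\dim\mathfrak{a}=1$ separately (any finite-order map of a line is a complex reflection), whereas your argument covers it vacuously since there are then no closed subsets of codimension at least $2$ in $V//H$.
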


\begin{proof}
If $\dim\mathfrak{a}=1$ then any map of of finite order of $\mathfrak{a}$ is a
complex reflection.

We may now assume that $\dim\mathfrak{a}\geq2$. Let $X\subset V//H$ be Zariski
closed and of codimension at least $2$ we show that $p^{-1}(X)$ has
codimension at least $2.$ Let $l=\dim\mathfrak{a}$. Then $\dim X=l-k$ with
$k\geq2$. Let $Y\subset p^{-1}(X)$ be an irreducible component. Then since $H$
is connected $Y$ is $H$--invariant. We may assume that $\overline{p(Y)}=X$
(this might increase $k$). If $x\in p(Y)$ then $\dim\left(  p_{|Y}\right)
^{-1}(x)\leq\dim p^{-1}(x)=\dim V-l$. The theorem of the fiber (c.f.
\cite{GoodWall}) implies that there exists $x\in p(Y)$ such that
\[
\dim\left(  p_{|Y}\right)  ^{-1}(x)=\dim Y-\dim X.
\]
Thus
\[
\dim V-l\geq\dim Y-\dim X=\dim Y-(l-k).
\]
So
\[
\dim Y\leq\dim V-l+l-k=\dim V-k.
\]
Thus the hypothesis of Panyushev's theorem is satisfied in this case. This
completes our proof of Vinberg's main theorem.
\end{proof}

\section{Maximal compact subgroups}

If $\mathfrak{g}$ is a semi-simple Lie algebra over $\mathbb{C}$ then we may
realize $\mathfrak{g}$ as a Lie sub-algebra of $M_{n}(\mathbb{C})$ with the
property that if $x\in\mathfrak{g}$ then $x^{\ast}=\bar{x}^{T}\in\mathfrak{g}$
(i.e. the conjugate transpose is in $\mathfrak{g}$). On $\mathfrak{g}$ we put
the Hilbert space structure
\[
\left\langle x,y\right\rangle =\mathrm{tr}(\mathrm{ad}x\mathrm{ad}y^{\ast
})=B(x,y^{\ast})
\]
with $B$ the Killing form. $Aut(\mathfrak{g})$ is closed under adjoint with
respect to $\left\langle ...,...\right\rangle $.

\subsection{$\theta$ can be assumed unitary}

\begin{lemma}
$\mathrm{Aut}(\mathfrak{g})$ is closed under adjoint with respect to
$\left\langle ...,...\right\rangle $.
\end{lemma}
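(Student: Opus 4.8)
The plan is to reduce everything to a single clean identity relating the operator adjoint $g^{*}$ to conjugation by the compact real form. Write $\sigma$ for the conjugate-linear map $x\mapsto x^{*}$ on $\mathfrak{g}$; by hypothesis $\sigma$ maps $\mathfrak{g}$ to itself, and since $[x,y]^{*}=-[x^{*},y^{*}]$ one checks that $\tau=-\sigma$, i.e. $\tau(x)=-x^{*}$, is a conjugate-linear automorphism of the (real) Lie algebra underlying $\mathfrak{g}$, namely the conjugation with respect to the compact real form $\mathfrak{u}=\{x:x^{*}=-x\}$. I would first record that $\tau=\tau^{-1}$ and that conjugation by $\tau$ preserves $\mathrm{Aut}(\mathfrak{g})$: if $g\in\mathrm{Aut}(\mathfrak{g})$ then $\tau g\tau^{-1}$ is again a complex-linear bracket-preserving bijection, the complex-linearity coming from the fact that a product of two conjugate-linear maps with one linear map in between is linear, and the bracket-preservation coming from $\tau[x,y]=[\tau x,\tau y]$.

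Next I would reformulate the given Hilbert structure as $\langle x,y\rangle=B(x,\sigma y)$, which is visibly linear in $x$ and conjugate-linear in $y$. The crux is the identity
\[
g^{*}=\sigma g^{-1}\sigma\qquad(g\in\mathrm{Aut}(\mathfrak{g})).
\]
To prove it I would use that every automorphism preserves the Killing form, $B(gx,gy)=B(x,y)$, so that the $B$-transpose $g^{t}$ (defined by $B(gx,y)=B(x,g^{t}y)$) equals $g^{-1}$. Then for the operator adjoint one computes $\langle gx,y\rangle=B(gx,\sigma y)=B(x,g^{-1}\sigma y)$, while $\langle x,g^{*}y\rangle=B(x,\sigma g^{*}y)$; comparing and using that $B$ is nondegenerate gives $\sigma g^{*}=g^{-1}\sigma$, and since $\sigma^{2}=\mathrm{id}$ this is exactly $g^{*}=\sigma g^{-1}\sigma$.

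Finally, because $\sigma=-\tau$ one has $\sigma g^{-1}\sigma=\tau g^{-1}\tau=\tau g^{-1}\tau^{-1}$, which lies in $\mathrm{Aut}(\mathfrak{g})$ by the first paragraph applied to $g^{-1}$. Hence $g^{*}\in\mathrm{Aut}(\mathfrak{g})$, completing the argument. I expect the main obstacle to be purely bookkeeping: keeping straight which maps are complex-linear and which are conjugate-linear (so that $g^{*}$ genuinely comes out complex-linear and the compositions land in the automorphism group), together with the one substantive input that automorphisms are $B$-orthogonal, which is what forces $g^{t}=g^{-1}$ and makes the identity $g^{*}=\sigma g^{-1}\sigma$ hold for automorphisms even though it fails for a general $g\in GL(\mathfrak{g})$.
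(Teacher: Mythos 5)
Your proposal is correct and takes essentially the same route as the paper: both arguments derive the key identity $g^{*}=\sigma g^{-1}\sigma$ from the invariance of the Killing form under automorphisms, and then verify that this element preserves the bracket. Your repackaging of the final verification through the conjugate-linear automorphism $\tau=-\sigma$ (the compact-form conjugation) is merely a tidier organization of the paper's direct sign computation $[\sigma g^{-1}\sigma x,\sigma g^{-1}\sigma y]=\sigma g^{-1}\sigma[x,y]$, not a genuinely different argument.
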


\begin{proof}
Let $g\in\mathrm{Aut}(\mathfrak{g})$. Then%
\[
\left\langle gx,y\right\rangle =B(gx,y^{\ast})=B(x,g^{-1}y^{\ast})
\]%
\[
=B(x,\left(  (g^{-1}y^{\ast})^{\ast}\right)  ^{\ast})=\left\langle
x,(g^{-1}y^{\ast})^{\ast}\right\rangle .
\]
Thus setting $\sigma(x)=x^{\ast}$, the adjoint of $g$ is $\sigma g^{-1}\sigma
$. We assert that this element is in $\mathrm{Aut}(\mathfrak{g}) $. To see
this we calculate%
\[
\lbrack\sigma g^{-1}\sigma x,\sigma g^{-1}\sigma y]=-[g^{-1}\sigma
x,g^{-1}\sigma y]^{\ast}=
\]%
\[
=-(g^{-1}[x^{\ast},y^{\ast}])^{\ast}=(g^{-1}[x,y]^{\ast})^{\ast}=\sigma
g^{-1}\sigma\lbrack x,y]\text{.}
\]

\end{proof}

Now let $\theta$ be an automorphism of order $m<\infty$ of $\mathfrak{g}$ and
let $(H,V)$ be the corresponding Vinberg pair ($V=\mathfrak{g}_{\zeta}$,
$\zeta$ a primitive $m$--th root of $1$. Let $G=Aut(\mathfrak{g})$ and let
$G^{o}$ denote the identity component of $G$. Let $U$ be the unitary group of
$\mathfrak{g}$ relative to $\left\langle ...,...\right\rangle $. Then $G\cap
U$ is a maximal compact subgroup of $G$ and $G^{o}\cap U$ is maximal compact
in $G^{o}$.

The conjugacy theorem of maximal compact subgroups implies that there exists
$g\in G$ such that $g\theta g^{-1}$ is contained in $G\cap U$. Thus we have proved

\begin{theorem}
There exists $g\in G$ such that $g\theta g^{-1}$ normalizes $G^{o}\cap U$.
Furthermore replacing $\theta$ with $g\theta g^{-1}$then $\mathfrak{g}_{\zeta
}^{\ast}=\mathfrak{g}_{\zeta^{-1}}$.
\end{theorem}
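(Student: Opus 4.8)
The plan is to prove the theorem in two moves. First, establish the conjugacy statement: there exists $g \in G$ with $g\theta g^{-1} \in G \cap U$. Second, deduce the adjoint relation $\mathfrak{g}_\zeta^* = \mathfrak{g}_{\zeta^{-1}}$ after this replacement. The first part is essentially given to us: $\theta$ is an automorphism of finite order, hence generates a finite cyclic subgroup $\langle \theta \rangle \subset G$, which is compact. By the conjugacy theorem for maximal compact subgroups of the (real) Lie group $G = \mathrm{Aut}(\mathfrak{g})$, any compact subgroup is conjugate into a maximal compact one. Since $G \cap U$ is maximal compact in $G$, there is $g \in G$ with $g \langle \theta \rangle g^{-1} \subset G \cap U$, so in particular $g\theta g^{-1}$ lies in $G \cap U$ and thus normalizes $G^o \cap U$ (it lies in the normalizer because $G^o \cap U$ is characteristic as the identity component of a maximal compact subgroup of the normal subgroup $G^o$).

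The second and more substantive move is the adjoint relation. After replacing $\theta$ by the conjugate, I may assume $\theta \in G \cap U$, i.e.\ $\theta$ is unitary with respect to $\langle \cdot, \cdot \rangle$ in addition to being an automorphism of finite order. The key observation is to combine unitarity with the structure of $\sigma(x) = x^*$. From the previous lemma, $\sigma g^{-1} \sigma$ is the adjoint of $g$ for any $g \in \mathrm{Aut}(\mathfrak{g})$; applied to $g = \theta$, the adjoint of $\theta$ is $\sigma \theta^{-1} \sigma$. Unitarity of $\theta$ means its adjoint equals its inverse, so $\sigma \theta^{-1} \sigma = \theta^{-1}$, which rearranges to $\theta \sigma = \sigma \theta$, i.e.\ $\theta$ commutes with the conjugate-transpose map $\sigma$. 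This commutation is the crux: it says $\sigma$ preserves the eigenspace decomposition of $\theta$ up to the action of $\sigma$ on eigenvalues.

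Given that $\theta$ and $\sigma$ commute, I would run the eigenspace computation directly. If $x \in \mathfrak{g}_\zeta$, so $\theta x = \zeta x$, then I want to identify $\theta(x^*)$. Here one must be careful that $\sigma$ is conjugate-linear, not linear. Applying $\sigma$ to $\theta x = \zeta x$ and using $\theta \sigma = \sigma \theta$ gives $\theta(x^*) = \theta \sigma(x) = \sigma \theta(x) = \sigma(\zeta x) = \bar{\zeta}\, x^*$. Since $\zeta$ is a root of unity on the unit circle, $\bar\zeta = \zeta^{-1}$, so $\theta(x^*) = \zeta^{-1} x^*$, which says exactly $x^* \in \mathfrak{g}_{\zeta^{-1}}$. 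Thus $\mathfrak{g}_\zeta^* \subseteq \mathfrak{g}_{\zeta^{-1}}$, and applying the same argument with $\zeta$ replaced by $\zeta^{-1}$ (together with $\sigma^2 = \mathrm{id}$) yields the reverse inclusion, giving the desired equality.

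The main obstacle, and the step that warrants the most care, is getting the conjugate-linearity of $\sigma$ to interact correctly with the eigenvalue. It would be easy to mistakenly write $\theta(x^*) = \zeta x^*$; the essential point is that $\sigma(\zeta x) = \bar\zeta\, \sigma(x)$, which is precisely what flips $\zeta$ to $\zeta^{-1} = \bar\zeta$ and produces the eigenspace swap $\mathfrak{g}_\zeta \leftrightarrow \mathfrak{g}_{\zeta^{-1}}$. The only other point needing a brief justification is why unitarity of $\theta$ follows from $\theta \in G \cap U$; this is immediate from the definition of $U$ as the unitary group of $\mathfrak{g}$ relative to the Hilbert space structure, so the adjoint of any element of $U$ is its inverse.
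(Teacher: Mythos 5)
Your proof is correct and takes essentially the same route as the paper: the paper's entire written proof is the appeal to the conjugacy theorem for maximal compact subgroups to arrange $g\theta g^{-1}\in G\cap U$, with the rest left implicit. Your completion --- unitarity combined with the preceding lemma (adjoint of $g$ equals $\sigma g^{-1}\sigma$) forcing $\theta\sigma=\sigma\theta$, and the conjugate-linearity of $\sigma$ flipping the eigenvalue $\zeta$ to $\bar\zeta=\zeta^{-1}$ --- is exactly the intended unwritten argument, correctly carried out.
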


Replace $\theta$ with $g\theta g^{-1}$. We note that if $L$ is the connected
subgroup of $G^{o}$ corresponding to $\mathfrak{g}^{\theta}$ then $L^{\ast}%
=L$. This implies

\begin{lemma}
$H$ is invariant under the adjoint corresponding to the restriction of
$\left\langle ...,...\right\rangle $ to $V$. Thus in particular $U(V)\cap H$
is a maximal compact subgroup of $H$.
\end{lemma}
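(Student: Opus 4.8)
The plan is to prove that $H\subset GL(V)$ is stable under the adjoint for the restricted form and then to read off the maximal-compact assertion from the standard structure theory for self-adjoint linear groups. Throughout write $\ell^{*}$ for the adjoint of an operator $\ell$ on $\mathfrak{g}$ relative to $\left\langle \cdot,\cdot\right\rangle $, and $T^{\dagger}$ for the adjoint of $T\in\mathrm{End}(V)$ relative to the restriction $\left\langle \cdot,\cdot\right\rangle _{|V}$; the latter is a genuine adjoint because the positive-definite form $\left\langle \cdot,\cdot\right\rangle $ remains positive definite, hence nondegenerate, on the subspace $V$.

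First I would record that every $\ell\in L$ preserves $V$. Since $L$ is connected with $\mathrm{Lie}(L)=\mathfrak{g}^{\theta}$ and $\theta$ fixes $\mathfrak{g}^{\theta}$ pointwise, for $X\in\mathfrak{g}^{\theta}$ we have $\theta\exp(\mathrm{ad}\,X)\theta^{-1}=\exp(\mathrm{ad}\,\theta X)=\exp(\mathrm{ad}\,X)$, so $\theta$ centralizes $L$. Hence each $\ell\in L$ commutes with $\theta$ and therefore stabilizes every $\theta$--eigenspace, in particular $\ell V=V$. This is exactly what makes restriction $\ell\mapsto\ell_{|V}$ a well-defined homomorphism $L\rightarrow GL(V)$ with image $H$.

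Next I would fix $h=\ell_{|V}\in H$ with $\ell\in L$ and use the previously established fact $L^{*}=L$: it gives $\ell^{*}\in L$, so by the preceding paragraph $\ell^{*}$ also preserves $V$. For $v,w\in V$ both $\ell v\in V$ and $\ell^{*}w\in V$, and
\[
\left\langle \ell_{|V}v,w\right\rangle =\left\langle \ell v,w\right\rangle =\left\langle v,\ell^{*}w\right\rangle =\left\langle v,(\ell^{*})_{|V}w\right\rangle .
\]
Since the form on the right is literally $\left\langle \cdot,\cdot\right\rangle _{|V}$, this identifies $h^{\dagger}=(\ell_{|V})^{\dagger}=(\ell^{*})_{|V}$, which lies in $H$ because $\ell^{*}\in L$. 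Thus $H^{\dagger}\subseteq H$, and as $\dagger$ is an involution, $H=(H^{\dagger})^{\dagger}\subseteq H^{\dagger}$, so $H^{\dagger}=H$. With $H$ now known to be a Zariski-closed reductive subgroup of $GL(V)$ stable under $\dagger$, the standard structure theorem for self-adjoint closed subgroups of $GL(V)$ (Mostow; see \cite{GoodWall}) yields that $H\cap U(V)$ is a maximal compact subgroup of $H$, together with the Cartan decomposition $H=(H\cap U(V))\exp(\mathfrak{p})$, where $\mathfrak{p}$ consists of the $\dagger$--Hermitian elements of $\mathrm{Lie}(H)$.

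The main obstacle, and the only point needing care, is the identification $h^{\dagger}=(\ell^{*})_{|V}$: the adjoint computed \emph{inside} $V$ must agree with the restriction of the ambient adjoint $\ell^{*}$. This is not a formal consequence of $\ell V=V$ alone; it requires that $\ell^{*}$ stabilize $V$ as well, which is precisely where the input $L^{*}=L$ is consumed. It is worth emphasizing that the subspace $V=\mathfrak{g}_{\zeta}$ is itself \emph{not} $*$--stable when $\zeta\neq\pm1$, since $\mathfrak{g}_{\zeta}^{*}=\mathfrak{g}_{\zeta^{-1}}$; what the argument uses is only that the two operators $\ell$ and $\ell^{*}$ each preserve $V$.
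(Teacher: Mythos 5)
Your proposal is correct and is essentially the paper's own argument: after replacing $\theta$ by a unitary conjugate, the paper simply records $L^{*}=L$ and asserts that \textquotedblleft this implies\textquotedblright\ the lemma, and you have supplied exactly the implicit steps (that $\theta$ centralizes the connected group $L$, so each $\ell\in L$ and hence each $\ell^{*}\in L$ preserves $V$; that consequently the adjoint of $\ell_{|V}$ relative to $\left\langle \cdot,\cdot\right\rangle _{|V}$ is $(\ell^{*})_{|V}\in H$; and the standard fact, also invoked in the paper's Kempf--Ness subsection, that a Zariski closed self-adjoint subgroup of $GL(V)$ has $H\cap U(V)$ as a maximal compact subgroup). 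Your emphasis that $V=\mathfrak{g}_{\zeta}$ is not itself $*$-stable, so the identification $h^{\dagger}=(\ell^{*})_{|V}$ genuinely consumes $L^{*}=L$ rather than following formally from $\ell V=V$, is an accurate gloss on the paper's terse deduction.
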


\subsection{Kempf-Ness theory}

We recall a bit of the Kempf-Ness theory. Let $V$ be a finite dimensional
complex Hilbert space and let $H\subset GL(V)$ be a Zariski closed subspace
invariant under adjoint. Let $K=H\cap U(V)$ then $K$ is a maximal compact
subgroup of $H$ and $H$ is the Zariski closure of $K$. We say that $v\in V$ is
critical if $\left\langle Xv,v\right\rangle =0$ for all $X\in\mathrm{Lie}(H)$.
We will use the notation $Crit(V)$ for the space of critical elements of $V$
(this set is usually called the Kempf-Ness set) . The Kempf-Ness theorem
\cite{Kempf-Ness} says

\begin{theorem}
Notation as above

1. $x\in V$ is critical if and only if $\left\Vert hx\right\Vert
\geq\left\Vert x\right\Vert $ for all $h\in H$.

2. If $x\in V$ is critical then $\{y\in Hx|\left\Vert y\right\Vert =\left\Vert
x\right\Vert \}=Kx$.

3. If $x\in V$ and $Hx$ is closed then $Hx$ contains a critical element.

4. If $x\in V$ is critical then $Hx$ is closed.
\end{theorem}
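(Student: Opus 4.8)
The plan is to reduce the whole theorem to a single convexity estimate via the polar decomposition of $H$. Write $\mathfrak{h}=\mathrm{Lie}(H)$; since $H$ is invariant under the adjoint $*$, so is $\mathfrak{h}$, giving a decomposition $\mathfrak{h}=\mathfrak{k}\oplus\mathfrak{p}$ into the skew-Hermitian part $\mathfrak{k}=\mathrm{Lie}(K)$ and the Hermitian part $\mathfrak{p}=i\mathfrak{k}$, together with $H=K\exp(\mathfrak{p})$. Because $K$ acts by isometries, for $h=k\exp(X)$ one has $\|hx\|=\|\exp(X)x\|$, so $\inf_{h\in H}\|hx\|=\inf_{X\in\mathfrak{p}}\|\exp(X)x\|$; and since $iY\in\mathfrak{p}$ whenever $Y\in\mathfrak{k}$, the criticality condition $\langle Xx,x\rangle=0$ for all $X\in\mathfrak{h}$ is equivalent to the same condition restricted to $X\in\mathfrak{p}$. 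This reduces every statement to the one-variable behaviour along the rays $t\mapsto\exp(tX)$, $X\in\mathfrak{p}$ (the case $x=0$ being trivial).

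The technical heart I would establish first is: for $X\in\mathfrak{p}$ the function $\psi_{x,X}(t)=\log\|\exp(tX)x\|^{2}$ is convex, with $\psi_{x,X}'(0)=2\langle Xx,x\rangle/\|x\|^{2}$. Diagonalizing the Hermitian operator as $X=\sum_{i}\lambda_{i}P_{i}$ gives $\|\exp(tX)x\|^{2}=\sum_{i}e^{2t\lambda_{i}}\|P_{i}x\|^{2}=:g(t)$, and $(\log g)''=(g''g-(g')^{2})/g^{2}\ge 0$ is precisely the Cauchy--Schwarz inequality $(g')^{2}\le g\,g''$; equality forces all $\lambda_{i}$ with $P_{i}x\ne 0$ to coincide. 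The derivative formula follows from $\frac{d}{dt}\|\exp(tX)x\|^{2}|_{0}=2\,\mathrm{Re}\langle Xx,x\rangle$ and the fact that $\langle Xx,x\rangle$ is real for Hermitian $X$.

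Granting this, parts 1--3 follow cleanly. For 1, if $x$ is critical then each $\psi_{x,X}$ is convex with vanishing derivative at $0$, hence nondecreasing for $t\ge 0$, so $\|\exp(X)x\|\ge\|x\|$ for all $X\in\mathfrak{p}$, i.e. $\|hx\|\ge\|x\|$ for all $h$; conversely, if $\|hx\|\ge\|x\|$ for all $h$ then each $\psi_{x,X}$ has a minimum at $0$, so $2\langle Xx,x\rangle=\psi_{x,X}'(0)\|x\|^{2}=0$ for all $X\in\mathfrak{p}$, giving criticality. For 2, if $x$ is critical and $y=k\exp(X)x$ has $\|y\|=\|x\|$, then $\psi_{x,X}$ is convex, nondecreasing, and equal at the endpoints of $[0,1]$, hence constant there; the equality case forces the eigenvalues of $X$ supported by $x$ to be a common value $\lambda$, and $\langle Xx,x\rangle=\lambda\|x\|^{2}=0$ gives $\lambda=0$, so $Xx=0$, $\exp(X)x=x$, and $y=kx\in Kx$. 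For 3, on a closed orbit a sublevel set $\{y\in Hx:\|y\|\le \rho+1\}$ with $\rho=\inf_{Hx}\|\cdot\|$ is closed and bounded, hence compact, so the infimum is attained at some $h_{0}x$, and $\|hh_{0}x\|\ge\|h_{0}x\|$ for all $h$ makes $h_{0}x$ critical by part 1.

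Part 4 is the real obstacle. Assuming $x$ critical, let $y\in\overline{Hx}$ with $h_{n}x\to y$; writing $h_{n}=k_{n}\exp(X_{n})$ and passing to a subsequence with $k_{n}\to k$ in the compact group $K$, it suffices to show $\exp(X_{n})x\to k^{-1}y$ lies in $Hx$. If the radial parts $X_{n}$ stay bounded, a subsequence converges and the limit is manifestly in $Hx$, so the entire difficulty is the case $\|X_{n}\|\to\infty$. Setting $Z_{n}=X_{n}/\|X_{n}\|\to Z$ with $\|Z\|=1$, criticality gives $\langle Z_{n}x,x\rangle=0$ for every $n$, hence $\langle Zx,x\rangle=\sum_{i}\lambda_{i}\|P_{i}x\|^{2}=0$ in the limit; the intended contradiction is that unless $Z$ fixes $x$, some eigenvalue $\lambda_{i}>0$ with $P_{i}x\ne 0$ forces $\|\exp(X_{n})x\|\to\infty$. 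Making this limiting eigenvalue analysis rigorous---controlling $\exp(\|X_{n}\|Z_{n})x$ by the leading behaviour of $\exp(\|X_{n}\|Z)x$ when $Z_{n}\to Z$ only approximately---is the delicate point. I would handle it either through the Hilbert--Mumford--Kempf theorem, which replaces the escaping sequence by a genuine one-parameter degeneration $\lim_{t}\lambda(t)x$ into the unique closed orbit of $\overline{Hx}$ (that orbit containing a critical point by part 3, forcing a norm comparison incompatible with $\|X_{n}\|\to\infty$), or by proving directly that the convex function $hK\mapsto\|hx\|^{2}$ on $H/K$ is proper modulo the stabilizer of $x$. This properness/one-parameter-subgroup step is where the essential content of the closedness statement resides, and it is the part I expect to require the most care.
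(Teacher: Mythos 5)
The paper offers no proof of this statement to compare against: it is quoted verbatim from Kempf--Ness \cite{Kempf-Ness}, used as a black box, with the remark that part 4 is the hard part. Measured against the standard Kempf--Ness argument, your parts 1--3 are correct and are essentially that argument: the polar decomposition $H=K\exp(\mathfrak{p})$ (valid since $H$ is Zariski closed, reductive and $*$-stable, with $K=H\cap U(V)$), the reduction of criticality to Hermitian directions (using that $\mathfrak{h}$ is a complex subspace, so $i\mathfrak{k}\subset\mathfrak{p}$), the log-convexity of $t\mapsto\|\exp(tX)x\|^{2}$ via diagonalization with the Cauchy--Schwarz equality analysis for part 2, and the compactness of norm sublevel sets of a closed orbit for part 3 are all sound.

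Part 4, however, is not proved, and the gap you flag is genuine in both of the directions you sketch. The sequential route fails as stated: from $Z_{n}=X_{n}/\|X_{n}\|\to Z$ you cannot control $\exp(\|X_{n}\|Z_{n})x$ by the spectral data of $Z$, because the eigenprojections of $Z_{n}$ need not converge (eigenvalue crossings), and the uniform lower bound you would need is exactly the properness of $hK\mapsto\|hx\|^{2}$ modulo the stabilizer --- which for a critical $x$ is essentially equivalent to the closedness being proved, so invoking it is circular unless established independently. The parenthetical ``norm comparison incompatible with $\|X_{n}\|\to\infty$'' is likewise an assertion, not an argument. Your first suggested route does work, but it needs more than the bare Hilbert--Mumford theorem over $\mathbb{C}$: an arbitrary destabilizing one-parameter subgroup $\lambda$ is only $H$-conjugate to one of polar form, while criticality of $x$ is preserved only under $K$, so you need the adapted (self-adjoint) form --- Kempf's optimal destabilizing subgroup, or the Richardson--Slodowy/Birkes real form of Hilbert--Mumford --- producing $X\in\mathfrak{p}$ with $\lim_{t\to\infty}\exp(tX)x$ existing in the unique closed orbit of $\overline{Hx}$. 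Granted that, criticality finishes in one line: existence of the limit forces the eigenvalues of $X$ on the support of $x$ to be $\leq 0$, while $\langle Xx,x\rangle=\sum_{i}\lambda_{i}\|P_{i}x\|^{2}=0$ then forces $Xx=0$, so the limit is $x$ itself and $Hx$ is closed. The missing ingredient is thus precisely the polar Hilbert--Mumford theorem, which must be proved or cited; without it the proposal does not establish the one assertion that is the actual content of the theorem.
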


The hard part of this theorem is part 4. We will now apply this to $(H,V)$
which we can assume satisfies the hypotheses of the theorem in light of the
material in the last sub-section. We now carry over $\mathfrak{g,\theta
,}\left\langle ...,...\right\rangle ,H,V=\mathfrak{g}_{\zeta}$. Set
$K_{H}=H\cap V$.

\begin{lemma}
$Crit(V)=\{x\in V|[x,x^{\ast}]=0\}$.
\end{lemma}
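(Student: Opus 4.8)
The plan is to convert the Kempf--Ness criticality condition into a statement about the single element $[v,v^{\ast}]$ and then exploit positive definiteness of $\langle\cdot,\cdot\rangle$. Recall that $\mathrm{Lie}(H)=\mathfrak{g}^{\theta}$ acts on $V=\mathfrak{g}_{\zeta}$ by $X\cdot v=[X,v]=\mathrm{ad}(X)v$, so $v$ is critical precisely when $\langle[X,v],v\rangle=0$ for every $X\in\mathfrak{g}^{\theta}$. The first step is the identity
\[
\langle[X,v],v\rangle=\langle X,[v,v^{\ast}]\rangle,\qquad X\in\mathfrak{g}^{\theta}.
\]
To obtain this I would unwind $\langle\cdot,\cdot\rangle=B(\cdot,(\cdot)^{\ast})$, use the $\mathrm{ad}$--invariance of the Killing form to slide $X$ across the bracket, $B([X,v],v^{\ast})=B(X,[v,v^{\ast}])$, and then re-express $B(X,[v,v^{\ast}])$ as $\langle X,[v,v^{\ast}]^{\ast}\rangle$. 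The only computation here is checking that $[v,v^{\ast}]$ is self-adjoint, which follows from $[a,b]^{\ast}=-[a^{\ast},b^{\ast}]$ applied with $a=v$, $b=v^{\ast}$.

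Second, I would locate $[v,v^{\ast}]$ inside the eigenspace grading. Since the previous subsection arranges $\mathfrak{g}_{\zeta}^{\ast}=\mathfrak{g}_{\zeta^{-1}}$, we have $v\in\mathfrak{g}_{\zeta}$ and $v^{\ast}\in\mathfrak{g}_{\zeta^{-1}}$, and because $\theta$ is an automorphism the bracket lands in $\mathfrak{g}_{\zeta\zeta^{-1}}=\mathfrak{g}_{1}=\mathfrak{g}^{\theta}$. Thus $[v,v^{\ast}]\in\mathrm{Lie}(H)$, the very space over which criticality is tested.

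Finally, combining the two steps, $v$ is critical if and only if $[v,v^{\ast}]$ is orthogonal, with respect to $\langle\cdot,\cdot\rangle$, to all of $\mathfrak{g}^{\theta}$. But $[v,v^{\ast}]$ itself lies in $\mathfrak{g}^{\theta}$, so taking $X=[v,v^{\ast}]$ forces $\langle[v,v^{\ast}],[v,v^{\ast}]\rangle=0$; since $\langle\cdot,\cdot\rangle$ is positive definite this gives $[v,v^{\ast}]=0$. The converse is immediate from the displayed identity, since then $\langle[X,v],v\rangle=\langle X,0\rangle=0$ for all $X$.

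I expect no serious obstacle here: the argument is essentially bookkeeping with the invariance of $B$ and the eigenspace decomposition. The one place to be careful is the conjugate-linearity of $\langle\cdot,\cdot\rangle$ together with the signs in $[a,b]^{\ast}=-[a^{\ast},b^{\ast}]$, which are exactly what make $[v,v^{\ast}]$ self-adjoint and keep the identity $\langle[X,v],v\rangle=\langle X,[v,v^{\ast}]\rangle$ honestly $\mathbb{C}$--linear in $X$. The essential structural input, and the reason the orthogonality collapses to genuine vanishing, is precisely that $[v,v^{\ast}]$ returns to the same space $\mathfrak{g}^{\theta}$.
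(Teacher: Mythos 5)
Your proof is correct and follows essentially the same route as the paper's: rewrite $\langle Xv,v\rangle$ via $B$ and its $\mathrm{ad}$--invariance as $B(X,[v,v^{\ast}])$, observe that $[v,v^{\ast}]\in\mathfrak{g}^{\theta}$ by the eigenspace grading, and conclude. You merely make explicit two steps the paper leaves implicit --- the self-adjointness of $[v,v^{\ast}]$ and the final choice $X=[v,v^{\ast}]$ with positive definiteness forcing $[v,v^{\ast}]=0$ --- which is exactly how the paper's argument is meant to be completed.
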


\begin{proof}
$x\in Crit(V)$ if and only if $\left\langle Xx,x\right\rangle =0$ for all
$X\in\mathrm{Lie}(H)$. This condition is if and only if $B(Xx,x^{\ast})=0$ for
all $X\in\mathrm{Lie}(H)$. Hence $x\in Crit(V)$ if and only if
$B([X,x],x^{\ast})=B([x,x^{\ast}],X)=0$ for all $X\in\mathfrak{g}^{\theta} $.
But our assumptions imply that $[x,x^{\ast}]\in\mathfrak{g}^{\theta}$.
\end{proof}

\section{Analogues of the Kostant-Rallis theorems}

Let $(H,V)$ be a Vinberg pair corresponding (as above) to a simple Lie algebra
$\mathfrak{g}$ with an automorphism of order $m$, $\theta$. Let $\mathfrak{a}$
be a Cartan subspace of $V$. We assume, as we may, that $\mathfrak{g}\subset
M_{n}(\mathbb{C})$ is invariant under adjoint and thus we have the inner
product
\[
\left\langle x,y\right\rangle =B(x,y^{\ast}).
\]
The restriction of this form yielding an inner product on $V$.

\subsection{The freeness}

Let $K_{H}=H\cap U(V)$. Then $K_{H}$ is a maximal compact subgroup of $H$. We
set $V_{1}=\{v\in V|\left\langle v,\mathfrak{a}\right\rangle =0\}.$If
$p_{\mathfrak{a}}$ and $p_{1}$ are the natural projections of respectively $V$
to $\mathfrak{a}$ and $V$ to $V_{1}$ we will identify $\mathcal{O}%
(\mathfrak{a)}$ and $\mathcal{O}(V_{1})$ respectively with $p_{\mathfrak{a}%
}^{\ast}(\mathcal{O}(\mathfrak{a}))$ and $p_{1}^{\ast}(\mathcal{O}(V_{1}))$
thus we have the graded algebra isomorphism%
\[
\mathcal{O}(\mathfrak{a})\otimes\mathcal{O}(V_{1})\rightarrow\mathcal{O}(V)
\]
under multiplication. Set $W=W_{H}(\mathfrak{a})$ then using Th\'{e}or\`{e}me
4 ii p.115 of \cite{Bourbaki} there is a subspace $\mathcal{A}$ of
$\mathcal{O(\mathfrak{a})}$ such that the map
\[
\mathcal{O}(\mathfrak{a})^{W}\otimes\mathcal{A}\rightarrow\mathcal{O}%
(\mathfrak{a})
\]
given by multiplication is a graded isomorphism and the Shephard Todd theorem
implies that we can take $\mathcal{A}$ to be a graded subspace of
$\mathcal{O}(\mathfrak{a})$ of dimension equal to $|W|$ and is a $W$ module
equivalent to the regular representation. The next result is analogous to
Lemma 12.4.11 in \cite{GoodWall}.

\begin{proposition}
The map $\mathcal{O}(V)^{H}\otimes\mathcal{A}\otimes\mathcal{O}(V_{1}%
)\rightarrow\mathcal{O}(V)$ given by multiplication is a graded vector space isomorphism.
\end{proposition}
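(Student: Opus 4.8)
The plan is to prove the statement by degenerating the multiplication map to an associated graded in which it becomes the tensor product of the two factorizations already recorded. The one genuine input from Vinberg's theory is item 6 of Section 2: the isomorphism $V//H\cong\mathfrak{a}/W$ is induced by the inclusion $\mathfrak{a}\hookrightarrow V$, so the restriction homomorphism $r:\mathcal{O}(V)\to\mathcal{O}(\mathfrak{a})$, $r(f)=f_{|\mathfrak{a}}$, carries $\mathcal{O}(V)^{H}$ isomorphically, as graded algebras, onto $\mathcal{O}(\mathfrak{a})^{W}$; in particular $r$ is injective on $\mathcal{O}(V)^{H}$. I would begin by recording this.

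Next I would exploit the bigrading $\mathcal{O}(V)\cong\mathcal{O}(\mathfrak{a})\otimes\mathcal{O}(V_{1})=\bigoplus_{a,b}\mathcal{O}(\mathfrak{a})^{(a)}\otimes\mathcal{O}(V_{1})^{(b)}$ coming from the given multiplication isomorphism, and filter by $V_{1}$--degree from below: $G_{c}=\bigoplus_{b\ge c}\mathcal{O}(\mathfrak{a})\otimes\mathcal{O}(V_{1})^{(b)}$. This filtration is decreasing and multiplicative ($G_{c}G_{c'}\subseteq G_{c+c'}$), and $\mathrm{gr}_{G}\mathcal{O}(V)\cong\mathcal{O}(\mathfrak{a})\otimes\mathcal{O}(V_{1})$ as graded algebras. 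Writing $f=\sum_{b}f_{b}$ one has $r(f)=f_{0}$, because $\mathcal{O}(V_{1})^{(b)}$ vanishes at the origin for $b>0$. The key observation is then that for a nonzero homogeneous $f\in\mathcal{O}(V)^{H}$ the leading ($G$--)symbol is $r(f)\in\mathcal{O}(\mathfrak{a})^{W}$, sitting in $G$--degree $0$ (here injectivity of $r$ is used), while elements of $\mathcal{A}\subset\mathcal{O}(\mathfrak{a})$ and of $\mathcal{O}(V_{1})^{(d)}$ are their own symbols, in $G$--degrees $0$ and $d$ respectively.

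Equipping the source with the tensor--product $G$--filtration, the multiplication map $\mu$ is filtered, and $\mathrm{gr}(\mu)$ is the multiplication map $\mathcal{O}(\mathfrak{a})^{W}\otimes\mathcal{A}\otimes\mathcal{O}(V_{1})\to\mathcal{O}(\mathfrak{a})\otimes\mathcal{O}(V_{1})$, $(q,a,h)\mapsto(qa)\otimes h$. This is precisely the given graded isomorphism $\mathcal{O}(\mathfrak{a})^{W}\otimes\mathcal{A}\xrightarrow{\sim}\mathcal{O}(\mathfrak{a})$ tensored with $\mathrm{id}_{\mathcal{O}(V_{1})}$, hence an isomorphism. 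Since in each total degree all the spaces involved are finite dimensional and the $G$--filtration has only finitely many steps, $\mathrm{gr}(\mu)$ being an isomorphism forces $\mu$ to be one.

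The main obstacle is concentrated entirely in the first paragraph: that restriction to $\mathfrak{a}$ is injective---indeed an isomorphism---on $H$--invariants. This is the analogue of the Chevalley restriction theorem and is supplied by Vinberg's item 6 (which in turn rests on his main theorem, proved above). Granting it, the rest is the routine verification that the $G$--symbol of an invariant is its restriction and that $\mathrm{gr}(\mu)$ is the asserted tensor product. As a consistency check, the isomorphisms $\mathcal{O}(V)^{H}\cong\mathcal{O}(\mathfrak{a})^{W}$, $\mathcal{O}(\mathfrak{a})\cong\mathcal{O}(\mathfrak{a})^{W}\otimes\mathcal{A}$ and $\mathcal{O}(V)\cong\mathcal{O}(\mathfrak{a})\otimes\mathcal{O}(V_{1})$ already show that source and target have equal Hilbert series, so it would in fact suffice to prove injectivity or surjectivity of $\mu$ alone; the graded argument delivers both at once.
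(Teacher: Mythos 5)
Your argument is correct and is essentially the paper's own: the paper likewise rests everything on the restriction isomorphism $\mathcal{O}(V)^{H}\cong\mathcal{O}(\mathfrak{a})^{W}$ (Vinberg's result 6) to equate graded dimensions, and then invokes the proof of Lemma 12.4.11 in \cite{GoodWall}, whose downward induction on the $V_{1}$--degree (replace a $W$--invariant factor by its $H$--invariant extension, the error having strictly higher $V_{1}$--degree) is precisely your filtration-by-$V_{1}$-degree and associated-graded argument written in inductive form. So this is the same approach, merely repackaged in the language of filtrations.
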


\begin{proof}
We have seen that the restriction map $p_{\mathfrak{a}}^{\ast}:\mathcal{O}%
(V)^{H}\rightarrow\mathcal{O}(\mathfrak{a})^{W}$ is a graded algebra
isomorphism. Thus if we grade the tensor products above by the tensor product
grade then the graded components of $\mathcal{O}(V)^{L}\otimes\mathcal{A}%
\otimes\mathcal{O}(V_{1})$ and $\mathcal{O}(\mathfrak{a})^{W}\otimes
\mathcal{A}\otimes\mathcal{O}(V_{1})$ have the same dimension. Now the rest of
the argument is identical to that of Lemma 12.4.11 \cite{GoodWall}.
\end{proof}

The following result is proved in exactly the same way as in the last
paragraph of p.602 in [GW] by induction on the degree.

\begin{corollary}
\label{Harmonics}We extend $\left\langle ...,...\right\rangle $ to an inner
product on $\mathcal{O}(V)$ and define $\mathcal{H}^{j}=\left(  \left(
\mathcal{O}(V)\mathcal{O}_{+}(V)^{H}\right)  ^{j}\right)  ^{\perp}$ in
$\mathcal{O}(V)^{j}$ relative to this inner product. Then $\mathcal{H=\oplus
}_{j=0}^{\infty}\mathcal{H}^{j}$ is an $H$--module isomorphic with
$\mathcal{O}(V)/\left(  \mathcal{O}(V)\mathcal{O}_{+}(V)^{H}\right)  $ and
furthermore the map%
\[
\mathcal{O}(V)^{H}\otimes\mathcal{H\rightarrow O}(V)
\]
given by multiplication is a linear bijection.
\end{corollary}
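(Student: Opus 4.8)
The plan is to derive both assertions from two things already available: the graded freeness of the Proposition just proved, and the fact that orthogonal complements for $\langle\cdot,\cdot\rangle$ are $H$-stable. I would extend $\langle\cdot,\cdot\rangle$ to $\mathcal{O}(V)$ in the usual Fischer fashion, so that the grading is orthogonal and $K_H=H\cap U(V)$ acts unitarily on each homogeneous component $\mathcal{O}(V)^j$; the latter holds because $K_H\subset U(V)$ preserves any $U(V)$-invariant inner product on $\mathcal{O}(V)=S(V^{\ast})$. Writing $I=\mathcal{O}(V)\mathcal{O}_+(V)^H$ for the graded, $H$-stable ideal generated by the positive-degree invariants, I then have $I=\oplus_j I^j$ together with the orthogonal decomposition $\mathcal{O}(V)^j=I^j\oplus\mathcal{H}^j$ coming straight from the definition $\mathcal{H}^j=(I^j)^{\perp}$.

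First I would check that each $\mathcal{H}^j$ is an $H$-submodule. Since $I^j$ is $K_H$-stable and the inner product is $K_H$-invariant, its orthogonal complement $\mathcal{H}^j$ is $K_H$-stable; because $K_H$ is Zariski dense in $H$ and $H$ acts regularly on the finite-dimensional space $\mathcal{O}(V)^j$, $K_H$-stability forces $H$-stability. The quotient map $\mathcal{O}(V)^j\to\mathcal{O}(V)^j/I^j$ then restricts to an $H$-equivariant linear isomorphism $\mathcal{H}^j\xrightarrow{\ \sim\ }\mathcal{O}(V)^j/I^j$, and summing over $j$ (using that $I$ is graded) yields the graded $H$-module isomorphism $\mathcal{H}\cong\mathcal{O}(V)/\bigl(\mathcal{O}(V)\mathcal{O}_+(V)^H\bigr)$. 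This disposes of the first assertion.

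For the multiplication map $\mu:\mathcal{O}(V)^H\otimes\mathcal{H}\to\mathcal{O}(V)$ I would prove surjectivity degree by degree and then finish with a dimension count. Surjectivity is exactly the induction on degree mentioned in the text: it is trivial in degree $0$, and for $f\in\mathcal{O}(V)^d$ I write $f=h+g$ with $h\in\mathcal{H}^d$ and $g\in I^d$; each term of $g$ has the form $\varphi u$ with $u\in\mathcal{O}_+(V)^H$ and $\deg\varphi<d$, so the inductive hypothesis gives $\varphi\in\mathcal{O}(V)^H\cdot\mathcal{H}$, whence $\varphi u=u\varphi\in\mathcal{O}(V)^H\cdot\mathcal{H}$ since $u$ is invariant. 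Thus $f\in\mathcal{H}+\mathcal{O}(V)^H\cdot\mathcal{H}=\mathcal{O}(V)^H\cdot\mathcal{H}$, so $\mu$ is onto in every degree.

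The dimension count is where the Proposition does the essential work, and this is the step I expect to carry the real weight. Reducing the graded isomorphism $\mathcal{O}(V)\cong\mathcal{O}(V)^H\otimes(\mathcal{A}\otimes\mathcal{O}(V_1))$ modulo $I=\mathcal{O}_+(V)^H\cdot\mathcal{O}(V)$ identifies $\mathcal{O}(V)/I$ with $(\mathcal{O}(V)^H/\mathcal{O}_+(V)^H)\otimes(\mathcal{A}\otimes\mathcal{O}(V_1))=\mathcal{A}\otimes\mathcal{O}(V_1)$ as graded spaces, so $\mathcal{H}$ and $\mathcal{A}\otimes\mathcal{O}(V_1)$ share the same Hilbert series; hence $\mathcal{O}(V)^H\otimes\mathcal{H}$ and $\mathcal{O}(V)^H\otimes(\mathcal{A}\otimes\mathcal{O}(V_1))\cong\mathcal{O}(V)$ have equal, degreewise-finite graded dimensions. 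A graded surjection between graded vector spaces with equal finite-dimensional components is a graded isomorphism, so $\mu$ is bijective. The only genuinely non-formal points, beyond bookkeeping, are the passage from $K_H$-stability to $H$-stability of $\mathcal{H}^j$ via Zariski density, and the observation that the Hilbert-series match is supplied wholesale by the already-proven Proposition, so that no separate Cohen--Macaulay or regular-sequence argument is needed to upgrade surjectivity to bijectivity.
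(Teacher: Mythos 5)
Your proposal is correct and takes essentially the same route as the paper, whose proof is simply the citation of the last paragraph of p.~602 of \cite{GoodWall}: surjectivity of the multiplication map $\mathcal{O}(V)^{H}\otimes\mathcal{H}\rightarrow\mathcal{O}(V)$ by induction on degree, and injectivity by the degreewise dimension count that the preceding Proposition supplies via the graded isomorphism $\mathcal{O}(V)\cong\mathcal{O}(V)^{H}\otimes\mathcal{A}\otimes\mathcal{O}(V_{1})$, which forces $\mathcal{H}$ and $\mathcal{A}\otimes\mathcal{O}(V_{1})$ to have the same Hilbert series. The only detail you add beyond the cited argument is the explicit verification, via $K_{H}$-unitarity of the Fischer inner product and Zariski density of $K_{H}$ in $H$, that each $\mathcal{H}^{j}$ is $H$-stable, a point the paper leaves implicit and which your argument handles correctly.
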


We note that the ideal $\mathcal{O}(V)\mathcal{O}_{+}(V)^{H}$ defines the null
cone of $V$. Thus if we could show that the ideal $\mathcal{O}(V)\mathcal{O}%
_{+}(V)^{H}$ is a radical ideal then $\mathcal{H}$ could be identified with
$\mathcal{O}(\mathcal{N})$ with $\mathcal{N}$ the null cone of $V$. This is
one of the main results of Kostant and Rallis in the case when $\theta$ is an
involution. A result of Panyushev (c.f. \cite{Kraft-Schwarz}) proves that this
ideal is reduced if $H$ is semi--simple. The technique of Kostant-Rallis
\cite{Kostant-Rallis} does not work in the context of Vinberg pairs. However
their multiplicity theorem does generalize as does the technique used in
\cite{GoodWall} to prove the theorem.

\subsection{A few more results of Vinberg}

If $\lambda\in\mathfrak{a}^{\ast}$ set
\[
\mathfrak{g}^{\lambda}=\{x\in\mathfrak{g}|[h,x]=\lambda(h)x,h\in
\mathfrak{a}\}
\]
and $\Sigma(\mathfrak{a})=\{\lambda\neq0|\mathfrak{g}^{\lambda}\neq0\}$. We
set
\[
\mathfrak{a}^{\prime}=\{h\in\mathfrak{a}|\lambda(h)\neq0,\lambda\in
\Sigma(\mathfrak{a})\}.
\]

Set $C_{\mathfrak{g}}(\mathfrak{a})=\mathfrak{g}^{0}$, $C_{\mathfrak{g}%
}(h)=\ker\mathrm{ad}h$ and $C_{H}(\mathfrak{a})=\{g\in H|gh=h,h\in
\mathfrak{a}\}$. If $h\in\mathfrak{a}^{\prime}$ then $C_{\mathfrak{g}%
}(h)=C_{\mathfrak{g}}(\mathfrak{a})$. The following results are contained in
subsection 2 of section 3 of \cite{Vinberg}

\begin{theorem}
$C_{\mathfrak{g}}(\mathfrak{a})\cap V=\mathfrak{a\oplus n}$ with
$\mathfrak{n}$ a subspace of $V$ consisting of nilpotent elements.
$(C_{H}(\mathfrak{a})_{|\mathfrak{n}},\mathfrak{n})$ is a Vinberg pair of rank
$0$.
\end{theorem}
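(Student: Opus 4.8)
The plan is to push the entire question into the reductive centralizer $\mathfrak{z}:=C_{\mathfrak{g}}(\mathfrak{a})=\mathfrak{g}^{0}$, on which $\theta$ restricts to a finite--order automorphism. Indeed, for $x\in\mathfrak{z}$ and $h\in\mathfrak{a}$ one has $[h,\theta x]=\theta[\theta^{-1}h,x]=\zeta^{-1}\theta[h,x]=0$, since $\theta^{-1}h=\zeta^{-1}h\in\mathfrak{a}$; hence $\theta\mathfrak{z}=\mathfrak{z}$, and $C_{\mathfrak{g}}(\mathfrak{a})\cap V$ is exactly the $\zeta$--eigenspace $\mathfrak{z}_{\zeta}$ of $\theta|_{\mathfrak{z}}$. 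Because $\mathfrak{a}$ is a commuting family of semisimple elements, $\mathfrak{z}$ is reductive, so I would write $\mathfrak{z}=\mathfrak{c}\oplus\mathfrak{z}'$ with $\mathfrak{c}=Z(\mathfrak{z})$ toral and $\mathfrak{z}'=[\mathfrak{z},\mathfrak{z}]$ semisimple, both $\theta$--stable. Note $\mathfrak{a}\subseteq\mathfrak{c}$ by the very definition of $\mathfrak{z}$. Passing to $\zeta$--eigenspaces gives $\mathfrak{z}_{\zeta}=\mathfrak{c}_{\zeta}\oplus\mathfrak{z}'_{\zeta}$, and the first assertion becomes the two identifications $\mathfrak{c}_{\zeta}=\mathfrak{a}$ and $\mathfrak{n}:=\mathfrak{z}'_{\zeta}$ nilpotent.

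Both identifications rest on the maximality clause c) in the definition of a Cartan subspace, together with the fact that commuting semisimple elements sum to a semisimple element. For the center, $\mathfrak{c}_{\zeta}\subseteq V$ is abelian, consists of semisimple elements (as $\mathfrak{c}$ is toral) and contains $\mathfrak{a}$; thus $\mathfrak{c}_{\zeta}$ itself satisfies a) and b), and maximality forces $\mathfrak{c}_{\zeta}=\mathfrak{a}$. For $\mathfrak{n}=\mathfrak{z}'_{\zeta}$ I would show every element is nilpotent: given $x\in\mathfrak{n}$ with Jordan decomposition $x=x_{s}+x_{n}$ in $\mathfrak{g}$, the fact that $\theta$ preserves Jordan decompositions forces $\theta x_{s}=\zeta x_{s}$ and $\theta x_{n}=\zeta x_{n}$, so $x_{s}\in\mathfrak{z}'_{\zeta}$ is semisimple and commutes with $\mathfrak{a}$; then $\mathfrak{a}+\mathbb{C}x_{s}$ again satisfies a) and b), whence $x_{s}\in\mathfrak{a}\cap\mathfrak{z}'\subseteq\mathfrak{c}\cap\mathfrak{z}'=0$ and $x=x_{n}$ is nilpotent. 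This yields $C_{\mathfrak{g}}(\mathfrak{a})\cap V=\mathfrak{a}\oplus\mathfrak{n}$ with $\mathfrak{n}$ nilpotent.

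For the second assertion I would recognize $(\mathfrak{z}',\theta|_{\mathfrak{z}'},\zeta)$ as genuine Vinberg data: $\mathfrak{z}'$ is semisimple, and (in the only nontrivial case $\mathfrak{n}\neq0$) a root--of--unity eigenvalue count shows $\theta|_{\mathfrak{z}'}$ still has order $m$ with $\zeta$ primitive, so $\mathfrak{n}=\mathfrak{z}'_{\zeta}$ is the $\zeta$--eigenspace of a $\theta$--group. Its rank is the dimension of a Cartan subspace of $\mathfrak{n}$, and since a Cartan subspace consists of semisimple elements while $\mathfrak{n}$ contains none but $0$, the rank is $0$. To match the groups I compute $\mathrm{Lie}(C_{L}(\mathfrak{a}))=\{X\in\mathfrak{g}^{\theta}:[X,\mathfrak{a}]=0\}=\mathfrak{z}^{\theta}=\mathfrak{c}^{\theta}\oplus(\mathfrak{z}')^{\theta}$; as $\mathfrak{c}^{\theta}$ is central in $\mathfrak{z}$ it acts by $0$ on $\mathfrak{n}\subseteq\mathfrak{z}'$, so the action of $C_{H}(\mathfrak{a})$ on $\mathfrak{n}$ is governed by $(\mathfrak{z}')^{\theta}=\mathrm{Lie}(L')$, where $L'\subset\mathrm{Aut}(\mathfrak{z}')$ is the connected group defining the $\theta$--group of $\mathfrak{z}'$. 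Hence $(C_{H}(\mathfrak{a})^{\circ}|_{\mathfrak{n}},\mathfrak{n})$ is precisely this Vinberg pair, of rank $0$.

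The step I expect to demand the most care is this last matching of the groups, not the vector--space decomposition: one must verify that $C_{H}(\mathfrak{a})$ restricted to $\mathfrak{n}$ realizes all of $L'|_{\mathfrak{n}}$ rather than a proper subgroup, which is a connectedness/surjectivity statement for $C_{L}(\mathfrak{a})\to\mathrm{Aut}(\mathfrak{z}')|_{\mathfrak{n}}$. Up to passing to identity components this follows from the Lie algebra computation above, and I would lean on Vinberg's structural results in subsection 2 of section 3 of \cite{Vinberg} to control the component groups. The conceptual core, by contrast, is the pair of maximality arguments that simultaneously force $\mathfrak{c}_{\zeta}=\mathfrak{a}$ and make $\mathfrak{z}'_{\zeta}$ consist of nilpotent elements.
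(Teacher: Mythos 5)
You should know at the outset that the paper contains no proof of this statement: it is quoted directly from Vinberg (\cite{Vinberg}, Section 3, subsection 2), so there is no in-paper argument to compare yours against. Your reconstruction is correct and follows the natural route: pass to the reductive, $\theta$--stable centralizer $\mathfrak{z}=C_{\mathfrak{g}}(\mathfrak{a})$, split it as $\mathfrak{c}\oplus\mathfrak{z}^{\prime}$ with $\mathfrak{c}=Z(\mathfrak{z})$ toral and $\mathfrak{z}^{\prime}=[\mathfrak{z},\mathfrak{z}]$ semisimple, and invoke maximality of $\mathfrak{a}$ twice, once to force $\mathfrak{c}_{\zeta}=\mathfrak{a}$ and once to kill the semisimple Jordan component of any $x\in\mathfrak{z}^{\prime}_{\zeta}$. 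Two points deserve citation rather than bare assertion. First, your sentence deriving $x_{s}\in\mathfrak{z}^{\prime}_{\zeta}$ from $\theta x_{s}=\zeta x_{s}$ is, as written, a non sequitur: the eigenvalue computation only places $x_{s}$ in $\mathfrak{g}_{\zeta}$. What you actually need is the standard fact that a semisimple subalgebra of a semisimple Lie algebra contains the ambient Jordan components of its elements (Bourbaki, Ch.~I); the same fact is silently used in your rank--$0$ step, where nilpotency in $\mathfrak{g}$ must be converted into nilpotency for the intrinsic Jordan decomposition of $\mathfrak{z}^{\prime}$ before you can conclude that a Cartan subspace of $\mathfrak{n}$ is zero. (Similarly, that $\mathfrak{c}$ is toral is true but needs a word: $\mathfrak{z}$ contains a Cartan subalgebra of $\mathfrak{g}$, and $Z(\mathfrak{z})$ sits inside it.) With these facts supplied, both halves of your vector--space argument are complete.

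On the group matching, your Lie--algebra computation does correctly identify $C_{H}(\mathfrak{a})^{\circ}{}_{|\mathfrak{n}}$ with $L^{\prime}_{|\mathfrak{n}}$, since two connected subgroups of $GL(\mathfrak{n})$ with the same Lie algebra coincide, and you are right that the component group is the genuinely delicate point: $C_{H}(\mathfrak{a})$ is disconnected in practice (it is the finite group $M$ of order $81$ in the paper's E$_{6}$ example, albeit with $\mathfrak{n}=0$ there), and the paper's definition of a Vinberg pair requires a connected group, so the statement as quoted implicitly needs connectedness of the restriction or a slightly looser reading. You correctly defer this to Vinberg's structure theory, and it is worth noting that this looseness is harmless for everything the paper actually extracts from the theorem: the corollaries (that $\mathfrak{n}$ is a finite union of $C_{H}(\mathfrak{a})$--orbits, and that $X_{h}$ has a unique open orbit for $h\in\mathfrak{a}^{\prime}$) follow already from the identity--component statement together with rank $0$, since finitely many $L^{\prime}_{|\mathfrak{n}}$--orbits implies finitely many orbits of any larger group with the same identity component. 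Your eigenvalue observation that $\theta_{|\mathfrak{z}^{\prime}}$ retains full order $m$ whenever $\mathfrak{n}\neq0$ (otherwise $\zeta$ could not occur as an eigenvalue) is a correct and necessary detail, and the degenerate case $\mathfrak{n}=0$ is indeed vacuous; keep both remarks.
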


The second part of the theorem combined with 5. in Section \ref{Resuklts-1} implies

\begin{corollary}
The space $\mathfrak{n}$ is a finite union of $C_{H}(\mathfrak{a})$ orbits.
\end{corollary}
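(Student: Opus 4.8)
The plan is to apply result 5 of Section~\ref{Resuklts-1} to the rank-$0$ Vinberg pair $(C_{H}(\mathfrak{a})_{|\mathfrak{n}},\mathfrak{n})$ furnished by the second assertion of the preceding theorem. All of the content of the corollary comes from this one combination: result 5 guarantees that each fiber $X_{v}$ of the quotient map of a Vinberg pair is a finite union of orbits, while the vanishing of the rank forces $\mathfrak{n}$ itself to be a single such fiber, namely the null cone.

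First I would record what rank $0$ means for the pair $(C_{H}(\mathfrak{a})_{|\mathfrak{n}},\mathfrak{n})$. Since the rank is the common dimension of the Cartan subspaces (result 3), a Cartan subspace of this pair is $\{0\}$, and result 6 applied to this pair gives that $\mathfrak{n}//C_{H}(\mathfrak{a})_{|\mathfrak{n}}$ is isomorphic to the quotient of $\{0\}$ by a finite group, hence a single point. Consequently $\mathcal{O}(\mathfrak{n})^{C_{H}(\mathfrak{a})_{|\mathfrak{n}}}=\mathbb{C}$, every invariant is constant, and the null cone $\{x\in\mathfrak{n}\mid f(x)=f(0),\,f\in\mathcal{O}(\mathfrak{n})^{C_{H}(\mathfrak{a})_{|\mathfrak{n}}}\}$ is all of $\mathfrak{n}$. (As a cross-check, result 5 gives $\dim X_{v}=\dim\mathfrak{n}-0=\dim\mathfrak{n}$ for every $v$; since the $X_{v}$ are Zariski closed and $\mathfrak{n}$ is irreducible, each $X_{v}$ must equal $\mathfrak{n}$, so there is indeed a single fiber. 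This also matches result 2, as every element of $\mathfrak{n}$ is nilpotent.)

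Next, applying the last sentence of result 5 to this pair with $v=0$, the fiber $X_{0}=\mathfrak{n}$ is a finite union of $C_{H}(\mathfrak{a})_{|\mathfrak{n}}$-orbits; since the orbits of the restricted action $C_{H}(\mathfrak{a})_{|\mathfrak{n}}$ on $\mathfrak{n}$ are by definition the orbits of $C_{H}(\mathfrak{a})$ on $\mathfrak{n}$, this is exactly the assertion of the corollary. I do not expect a genuine obstacle here: the substance has been absorbed into the two quoted inputs, namely the identification of $(C_{H}(\mathfrak{a})_{|\mathfrak{n}},\mathfrak{n})$ as a rank-$0$ Vinberg pair and Vinberg's finiteness statement in result 5. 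The only point requiring any care is the elementary observation that vanishing rank collapses the quotient to a point, so that $\mathfrak{n}$ coincides with the null cone and is therefore precisely the fiber $X_{0}$ to which result 5 applies.
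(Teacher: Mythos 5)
Your proposal is correct and follows exactly the paper's (one-line) argument: the paper also obtains the corollary by combining the second assertion of the preceding theorem, that $(C_{H}(\mathfrak{a})_{|\mathfrak{n}},\mathfrak{n})$ is a Vinberg pair of rank $0$, with result 5 of Section~2. Your elaboration that rank $0$ forces $\mathfrak{n}$ to coincide with the null cone $X_{0}$, to which the finiteness-of-orbits statement then applies, is precisely the detail the paper leaves implicit.
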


We also note that it implies

\begin{corollary}
If $h\in\mathfrak{a}^{\prime}$ then $X_{h}$ contains a unique open $H$--orbit,
$H(h+x)$ where $C_{H}(\mathfrak{a})x$ is the unique open $C_{H}(\mathfrak{a})$
orbit in $\mathfrak{n}$.
\end{corollary}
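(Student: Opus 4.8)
The plan is to establish the set-theoretic identity $X_{h}=H(h+\mathfrak{n})$ and to read off the statement from it together with the preceding corollary. The key tool is the Jordan decomposition: every $v\in V$ may be written $v=v_{s}+v_{n}$ with $v_{s}$ semisimple, $v_{n}$ nilpotent and $[v_{s},v_{n}]=0$, and since $\theta v=\zeta v$ while $\theta$ preserves Jordan decompositions, both $v_{s}$ and $v_{n}$ again lie in $\mathfrak{g}_{\zeta}=V$.

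First I would prove the inclusion $H(h+\mathfrak{n})\subseteq X_{h}$. For $y\in\mathfrak{n}$ both $h$ and $y$ lie in $C_{\mathfrak{g}}(\mathfrak{a})$, in which $\mathfrak{a}$ is central, so $[h,y]=0$; as $h$ is semisimple and $y$ nilpotent, $h$ is the semisimple part of $h+y$, hence the closed orbit $Hh$ lies in $\overline{H(h+y)}$ and every $f\in\mathcal{O}(V)^{H}$ satisfies $f(h+y)=f(h)$. Thus $h+y\in X_{h}$, and since $X_{h}$ is $H$--stable this gives $H(h+\mathfrak{n})\subseteq X_{h}$.

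For the reverse inclusion, take $v\in X_{h}$. Since $X_{h}$ is closed and $H$--stable, $\overline{Hv}\subseteq X_{h}$; as $h\in\mathfrak{a}$ is semisimple, $Hh$ is closed (fact 1 of Section 2) and is the unique closed $H$--orbit contained in the fibre $X_{h}$. The unique closed orbit in $\overline{Hv}$ is $Hv_{s}$, so $Hv_{s}=Hh$ and $v_{s}=gh$ for some $g\in H$. Then $g^{-1}v_{n}$ is nilpotent, lies in $V$, and commutes with $g^{-1}v_{s}=h$; since $h\in\mathfrak{a}^{\prime}$ forces $C_{\mathfrak{g}}(h)=C_{\mathfrak{g}}(\mathfrak{a})$, we obtain $g^{-1}v_{n}\in C_{\mathfrak{g}}(\mathfrak{a})\cap V=\mathfrak{a}\oplus\mathfrak{n}$, and a nilpotent element of $\mathfrak{a}\oplus\mathfrak{n}$ (where $\mathfrak{a}$ is semisimple and central) must lie in $\mathfrak{n}$. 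Hence $g^{-1}v=h+g^{-1}v_{n}\in h+\mathfrak{n}$ and $v\in H(h+\mathfrak{n})$, proving $X_{h}=H(h+\mathfrak{n})$.

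Finally I would extract the conclusion. Since $H$ and $\mathfrak{n}$ are irreducible, $X_{h}=H(h+\mathfrak{n})$ is the image of the irreducible variety $H\times\mathfrak{n}$ under the action morphism, hence irreducible; being a finite union of $H$--orbits (fact 5 of Section 2), it has a unique open (dense) orbit. By the previous corollary $\mathfrak{n}$ is a finite union of $C_{H}(\mathfrak{a})$--orbits, so it has a unique open orbit $C_{H}(\mathfrak{a})x$, which is dense. Because $C_{H}(\mathfrak{a})$ fixes $h$ pointwise, $h+C_{H}(\mathfrak{a})x\subseteq H(h+x)$, so $H(h+x)=H(h+C_{H}(\mathfrak{a})x)$, and the density of $C_{H}(\mathfrak{a})x$ in $\mathfrak{n}$ makes $H(h+x)$ dense in $X_{h}$; a dense orbit in a finite union of orbits is the open one. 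This identifies the unique open orbit of $X_{h}$ as $H(h+x)$, as claimed. I expect the main obstacle to be the reverse inclusion, specifically the clean use of the Jordan decomposition inside $V$ and the identification of $Hv_{s}$ with the unique closed orbit $Hh$ of the fibre; the subsequent irreducibility and density arguments are routine.
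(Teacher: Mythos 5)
Your proposal is correct, and it is in essence the detailed version of the deduction the paper merely sketches: the paper states this corollary (together with its neighbors) as an immediate consequence of Vinberg's theorem $C_{\mathfrak{g}}(\mathfrak{a})\cap V=\mathfrak{a}\oplus\mathfrak{n}$ combined with fact 5 of Section \ref{Resuklts-1}, leaving the verification to \cite{Vinberg}. The one structural difference is order: you first establish $X_{h}=H(h+\mathfrak{n})$ --- which is the paper's \emph{next} corollary --- and then extract the unique open orbit from irreducibility of $H(h+\mathfrak{n})$, finiteness of the orbit set, and density of $H(h+x)$ via $m(h+x)=h+mx$ for $m\in C_{H}(\mathfrak{a})$; the paper states the open-orbit corollary before the irreducibility statement, so your route buys a cleaner logical chain at the cost of proving slightly more up front. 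All the individual steps check out: the Jordan components $v_{s},v_{n}$ stay in $V$ because $\theta$ commutes with the Jordan decomposition and $(\zeta v)_{s}=\zeta v_{s}$; a nilpotent element of $\mathfrak{a}\oplus\mathfrak{n}$ lies in $\mathfrak{n}$ by uniqueness of the Jordan decomposition (its $\mathfrak{a}$-component commutes with the $\mathfrak{n}$-component); and a dense orbit among finitely many is open since orbits are open in their closures. The only ingredient you import that is not among the paper's listed facts 1--6 is that $Hv_{s}$ is the unique closed orbit in $\overline{Hv}$ (equivalently, that each fiber $X_{v}$ contains exactly one closed orbit); this is standard --- it follows from fact 2 applied to the $\theta$-stable reductive centralizer $C_{\mathfrak{g}}(v_{s})$, or from facts 4 and 6 together with the separation of closed orbits by invariants --- but since the paper never states it explicitly, you should flag it with a reference (it is in \cite{Vinberg}) rather than treat it as given.
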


\begin{corollary}
If $h\in\mathfrak{a}^{\prime}$ then $X_{h}=H(h+\mathfrak{n})$, in particular,
$X_{h}$ is irreducible.
\end{corollary}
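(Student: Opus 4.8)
The plan is to establish the two inclusions $H(h+\mathfrak{n})\subseteq X_{h}$ and $X_{h}\subseteq H(h+\mathfrak{n})$ separately; irreducibility is then a formal consequence. The main tool is the compatibility of the Jordan decomposition with the grading: if $z\in V=\mathfrak{g}_{\zeta}$ has Jordan decomposition $z=z_{s}+z_{n}$ in $\mathfrak{g}$, then applying $\theta$ and invoking uniqueness of the decomposition together with $\theta(z)=\zeta z$ forces $\theta(z_{s})=\zeta z_{s}$ and $\theta(z_{n})=\zeta z_{n}$; hence $z_{s},z_{n}\in V$, with $z_{s}$ semisimple, $z_{n}$ nilpotent, and $[z_{s},z_{n}]=0$. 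I will also use the standard reductive-group facts that $X_{h}$, being a fiber of $V\rightarrow V//H$, contains a unique closed $H$-orbit and that this orbit lies in the closure of every $H$-orbit meeting $X_{h}$; since $h$ is semisimple, result~1 of Section~\ref{Resuklts-1} shows $Hh$ is closed, so $Hh$ is that unique closed orbit.

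For $H(h+\mathfrak{n})\subseteq X_{h}$, fix $y\in\mathfrak{n}$. The pair $(C_{H}(\mathfrak{a})_{|\mathfrak{n}},\mathfrak{n})$ has rank $0$, so its only semisimple element is $0$; by result~1 its unique closed orbit is $\{0\}$, which lies in $\overline{C_{H}(\mathfrak{a})y}$. As $C_{H}(\mathfrak{a})$ fixes $h$, we have $g(h+y)=h+gy$ for $g\in C_{H}(\mathfrak{a})$, whence $h\in\overline{C_{H}(\mathfrak{a})(h+y)}\subseteq\overline{H(h+y)}$. Every $f\in\mathcal{O}(V)^{H}$ is constant on orbit closures, so $f(h+y)=f(h)$ and $h+y\in X_{h}$. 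Thus $h+\mathfrak{n}\subseteq X_{h}$, and $H$-invariance of $X_{h}$ gives $H(h+\mathfrak{n})\subseteq X_{h}$.

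For the reverse inclusion, take $z\in X_{h}$ and write $z=z_{s}+z_{n}$. The crux of the proof, and the step I expect to be the main obstacle, is to show $z_{s}\in Hh$. I would choose a graded Jacobson--Morozov triple $(z_{n},h_{0},f)$ lying in $C_{\mathfrak{g}}(z_{s})$ with $h_{0}\in\mathfrak{g}^{\theta}$; then $\gamma(t)=\exp(t\,\mathrm{ad}\,h_{0})$ restricts to elements of $H$, and $[h_{0},z_{s}]=0$, $[h_{0},z_{n}]=2z_{n}$ give $\gamma(t)z=z_{s}+e^{2t}z_{n}\rightarrow z_{s}$ as $t\rightarrow-\infty$. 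Hence $z_{s}\in\overline{Hz}\subseteq X_{h}$; since $Hz_{s}$ is closed by result~1, uniqueness of the closed orbit in the fiber yields $Hz_{s}=Hh$, say $z_{s}=g_{1}h$. Then $g_{1}^{-1}z=h+y'$ with $y'=g_{1}^{-1}z_{n}$ nilpotent and $[h,y']=0$, so $y'\in C_{\mathfrak{g}}(h)\cap V=C_{\mathfrak{g}}(\mathfrak{a})\cap V=\mathfrak{a}\oplus\mathfrak{n}$, using $C_{\mathfrak{g}}(h)=C_{\mathfrak{g}}(\mathfrak{a})$ for $h\in\mathfrak{a}^{\prime}$. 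Writing $y'=a+n$ with $a\in\mathfrak{a}$, $n\in\mathfrak{n}$ displays the Jordan decomposition of the nilpotent $y'$, so $a=0$ and $y'=n\in\mathfrak{n}$; therefore $z=g_{1}(h+y')\in H(h+\mathfrak{n})$.

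Combining the inclusions gives $X_{h}=H(h+\mathfrak{n})$. For irreducibility, note that $H$ is connected and $h+\mathfrak{n}$ is an affine-linear subspace, so both are irreducible; hence $H\times(h+\mathfrak{n})$ is irreducible and its image $H(h+\mathfrak{n})=X_{h}$ under the action morphism is irreducible.
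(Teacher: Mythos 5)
Your proof is correct, and it takes a genuinely different route from the paper. The paper gives no freestanding argument for this corollary: it is stated as an immediate consequence of Vinberg's structure theorem ($C_{\mathfrak{g}}(\mathfrak{a})\cap V=\mathfrak{a}\oplus\mathfrak{n}$ with $(C_{H}(\mathfrak{a})_{|\mathfrak{n}},\mathfrak{n})$ of rank $0$) combined with result 5 of Section \ref{Resuklts-1} --- that each fiber $X_{v}$ has dimension $\dim V-l$ and is a \emph{finite union} of $H$--orbits, every irreducible component containing an open orbit --- together with the preceding corollary identifying the unique open orbit $H(h+x)$. You instead compute the fiber directly: the graded Jordan decomposition (so $z_{s},z_{n}\in V$), the graded Jacobson--Morozov contraction $\gamma(t)z=z_{s}+e^{2t}z_{n}$ to get $z_{s}\in\overline{Hz}$, uniqueness of the closed orbit in a fiber to force $Hz_{s}=Hh$, and the Jordan-uniqueness trick in $\mathfrak{a}\oplus\mathfrak{n}$ to place the nilpotent part in $\mathfrak{n}$. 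What your route buys: it is self-contained, never uses the orbit-finiteness or dimension formula of result 5, and in fact proves the sharper statement $X_{h}=\{z\in V\mid z_{s}\in Hh\}$. What it costs: you import the graded Jacobson--Morozov theorem (an $\mathfrak{sl}_{2}$-triple $(z_{n},h_{0},f)$ with $h_{0}\in\mathfrak{g}^{\theta}$ inside the $\theta$-stable reductive algebra $C_{\mathfrak{g}}(z_{s})$), which is true and classification-free --- it is proved in \cite{Vinberg} --- but is not among the results the paper quotes as inputs; likewise the ``unique closed orbit in each fiber'' fact, which you assert as standard GIT, can for consistency with the paper's toolkit be deduced from results 4 and 6 (closed orbits lie in $H\mathfrak{a}$, and $V//H\cong\mathfrak{a}/W_{H}(\mathfrak{a})$). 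Your first inclusion via the rank-$0$ pair (the only semisimple element of $\mathfrak{n}$ is $0$, so $0\in\overline{C_{H}(\mathfrak{a})y}$ and hence $h\in\overline{H(h+y)}$) and the closing irreducibility argument (image of the irreducible variety $H\times(h+\mathfrak{n})$ under the action morphism) are both sound.
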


Another result, of a different nature, that will be used in the next
subsection is the content of subsection 1 of section 3 in \cite{Vinberg}.

\begin{theorem}
Let $T_{\mathfrak{a}}$ be the intersection of all Zariski closed subgroups of
$G$ whose Lie algebra contains $\mathfrak{a}$. Then $T_{\mathfrak{a}}$ is a
torus that is the center of the group $C_{G}(\mathfrak{a})$. If $\mathfrak{t}%
_{\mathfrak{a}}=\mathrm{Lie}(T_{\mathfrak{a}})$ then
\[
\mathfrak{t}_{\mathfrak{a}}=\oplus_{%
\begin{array}
[c]{c}%
1\leq j<m\\
\gcd(j,m)=1
\end{array}
}\mathfrak{t}_{\mathfrak{a}}\cap\mathfrak{g}_{\zeta^{j}}%
\]
and each space $\mathfrak{t}_{\mathfrak{a}}\cap\mathfrak{g}_{\zeta^{j}}$ is a
Cartan subspace of the Vinberg pair $(L_{|\mathfrak{g}_{\zeta^{j}}%
},\mathfrak{g}_{\zeta^{j}})$.
\end{theorem}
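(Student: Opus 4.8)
The plan is to identify $T_{\mathfrak{a}}$ explicitly and then read off the three assertions (torus, graded decomposition, center) in turn, leaving the precise identification of the full center for last. First I would observe that since every element of $\mathfrak{a}$ is semisimple and $\mathfrak{a}$ is abelian, the elements $\exp(h)$, $h\in\mathfrak{a}$, are commuting semisimple elements, so the Zariski closure $A=\overline{\exp(\mathfrak{a})}$ is a torus with $\mathfrak{a}\subseteq\mathrm{Lie}(A)$. Any Zariski closed subgroup $B$ with $\mathrm{Lie}(B)\supseteq\mathfrak{a}$ contains $\exp(\mathfrak{a})$, hence contains $A$; and $A$ is itself such a subgroup, so $T_{\mathfrak{a}}=A$ is a torus. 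Because $C_{G}(\mathfrak{a})$ fixes $\mathfrak{a}$ pointwise it fixes $\exp(\mathfrak{a})$ and therefore centralizes $T_{\mathfrak{a}}$, giving the easy inclusion $T_{\mathfrak{a}}\subseteq Z(C_{G}(\mathfrak{a}))$. Finally, since $\theta$ acts on $V=\mathfrak{g}_{\zeta}$ as the scalar $\zeta$ it preserves $\mathfrak{a}$ as a set, so $\theta\exp(h)\theta^{-1}=\exp(\zeta h)\in\exp(\mathfrak{a})$; thus $\theta$ normalizes $T_{\mathfrak{a}}$ and $\mathfrak{t}_{\mathfrak{a}}$ is $\theta$-stable, hence graded, $\mathfrak{t}_{\mathfrak{a}}=\oplus_{j}\,\mathfrak{t}_{\mathfrak{a}}\cap\mathfrak{g}_{\zeta^{j}}$.

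Next I would pin down which graded pieces occur. The induced action of $\theta$ on the cocharacter lattice of $T_{\mathfrak{a}}$ is by an integer matrix of finite order, so its characteristic polynomial is monic with integer coefficients and all of its roots are roots of unity; grouping the eigenspaces of $\theta$ on $\mathfrak{t}_{\mathfrak{a}}$ by the order $d\mid m$ of the corresponding eigenvalue yields a decomposition $\mathfrak{t}_{\mathfrak{a}}=\oplus_{d\mid m}U_{d}$ into rational ($\theta$-defined) subspaces, each the Lie algebra of a subtorus. Since $\mathfrak{a}\subseteq\mathfrak{g}_{\zeta}$ lies in $U_{m}$, and $U_{m}$ is a subtorus Lie algebra containing $\mathfrak{a}$, minimality of $T_{\mathfrak{a}}$ forces $\mathfrak{t}_{\mathfrak{a}}=U_{m}$; that is, the only eigenvalues of $\theta$ on $\mathfrak{t}_{\mathfrak{a}}$ are the primitive $m$-th roots of unity, which is exactly the asserted decomposition $\mathfrak{t}_{\mathfrak{a}}=\oplus_{\gcd(j,m)=1}\mathfrak{t}_{\mathfrak{a}}\cap\mathfrak{g}_{\zeta^{j}}$. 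Because the characteristic polynomial is then a power of the cyclotomic polynomial $\Phi_{m}$, every primitive $m$-th root occurs with the same multiplicity, so all summands $\mathfrak{t}_{\mathfrak{a}}\cap\mathfrak{g}_{\zeta^{j}}$ share the common dimension $l=\dim\mathfrak{a}$.

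To prove that each $\mathfrak{t}_{\mathfrak{a}}\cap\mathfrak{g}_{\zeta^{j}}$ (with $\gcd(j,m)=1$) is a Cartan subspace of $(L_{|\mathfrak{g}_{\zeta^{j}}},\mathfrak{g}_{\zeta^{j}})$, note that it is abelian and consists of semisimple elements, being a subspace of $\mathrm{Lie}(T_{\mathfrak{a}})$; hence it is contained in some Cartan subspace $\mathfrak{b}$ of that pair, whence $l\leq\dim\mathfrak{b}$. For the reverse inequality I would rerun the argument of the previous two paragraphs with the primitive root $\zeta^{j}$ in place of $\zeta$: the torus attached to $\mathfrak{b}$ is again $\theta$-stable with $\theta$-eigenvalues the primitive $m$-th roots and equal multiplicities, so its $\zeta$-eigenspace is an abelian space of semisimple elements in $V$ of dimension $\dim\mathfrak{b}$, and maximality of $\mathfrak{a}$ gives $\dim\mathfrak{b}\leq l$. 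Thus $\dim\mathfrak{b}=l$ and $\mathfrak{t}_{\mathfrak{a}}\cap\mathfrak{g}_{\zeta^{j}}=\mathfrak{b}$ is a Cartan subspace. The same maximality argument, applied inside $\mathfrak{z}(\mathfrak{g}^{0})$, shows that for $\gcd(j,m)=1$ the graded piece $\mathfrak{z}(\mathfrak{g}^{0})\cap\mathfrak{g}_{\zeta^{j}}$ is an abelian space of semisimple elements containing the Cartan subspace $\mathfrak{t}_{\mathfrak{a}}\cap\mathfrak{g}_{\zeta^{j}}$, hence equals it.

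The remaining and hardest point is the exact identification $T_{\mathfrak{a}}=Z(C_{G}(\mathfrak{a}))$. The inclusion $\subseteq$ is already in hand, and the previous paragraph matches the coprime-degree graded pieces of $\mathfrak{z}(\mathfrak{g}^{0})=\mathrm{Lie}\,Z(C_{G}(\mathfrak{a})^{o})$ with those of $\mathfrak{t}_{\mathfrak{a}}$; what must still be ruled out are the graded pieces of $\mathfrak{z}(\mathfrak{g}^{0})$ in degrees $j$ with $\gcd(j,m)\neq 1$ (in particular the $\theta$-fixed degree $0$), which can be nonzero in the \emph{connected} center. The point is that these extra central directions fail to be central in the possibly disconnected group $C_{G}(\mathfrak{a})$: I expect to produce elements of $C_{G}(\mathfrak{a})$ outside its identity component whose common fixed space on $\mathfrak{z}(\mathfrak{g}^{0})$ is exactly $\mathfrak{t}_{\mathfrak{a}}$, so that $\mathrm{Lie}\,Z(C_{G}(\mathfrak{a}))=\mathfrak{t}_{\mathfrak{a}}$ and, $T_{\mathfrak{a}}$ being connected, $Z(C_{G}(\mathfrak{a}))=T_{\mathfrak{a}}$. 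Constructing enough such automorphisms, for instance from the normalizer of $T_{\mathfrak{a}}$ in $G$ or from the duality $\sigma(x)=x^{\ast}$ of Section $4$ (which fixes $\mathfrak{a}$ while inverting the extra central directions), is where the real work lies and is the step I would expect to be the main obstacle.
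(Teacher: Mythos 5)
You should first note the benchmark here: the paper itself gives no proof of this theorem --- it is quoted as ``the content of subsection 1 of section 3 in \cite{Vinberg}'' --- so the comparison is with Vinberg's original argument. Your first stages are correct and essentially follow Vinberg's line: the Zariski closure of $\exp(\mathrm{ad}\,\mathfrak{a})$ is a torus which minimality identifies with $T_{\mathfrak{a}}$; $\theta$ normalizes $T_{\mathfrak{a}}$; and the integrality of the $\theta$-action on the cocharacter lattice, combined with minimality applied to the subtorus with Lie algebra $U_{m}$, forces the characteristic polynomial to be a power of the cyclotomic polynomial $\Phi_{m}$, giving the coprime grading with equidimensional pieces (here you implicitly use $\mathfrak{t}_{\mathfrak{a}}\cap\mathfrak{g}_{\zeta}=\mathfrak{a}$, which maximality does supply). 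The two-sided dimension swap showing each graded piece is a Cartan subspace is also sound, except that the bound $\dim\mathfrak{b}\leq l$ uses result 3 of Section \ref{Resuklts-1} (conjugacy, hence common dimension, of all Cartan subspaces of $(H,V)$), not merely maximality of the particular $\mathfrak{a}$, since $\mathfrak{t}_{\mathfrak{b}}\cap\mathfrak{g}_{\zeta}$ lands in \emph{some} Cartan subspace, not necessarily in $\mathfrak{a}$.

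The genuine gap is the inclusion $Z(C_{G}(\mathfrak{a}))\subseteq T_{\mathfrak{a}}$, which you explicitly leave as an expectation rather than a proof. This is not a formality: already for $\mathfrak{g}=\mathfrak{sl}_{3}$, $m=2$, $\theta=\mathrm{Ad}\,\mathrm{diag}(1,1,-1)$ and $\mathfrak{a}=\mathbb{C}(E_{13}+E_{31})$, the centralizer $\mathfrak{g}^{0}$ is a full Cartan subalgebra, so $\mathfrak{z}(\mathfrak{g}^{0})$ is two-dimensional while $\mathfrak{t}_{\mathfrak{a}}$ is one-dimensional: the connected center of $C_{G^{o}}(\mathfrak{a})$ is strictly larger than $T_{\mathfrak{a}}$, so elements outside the identity component are indispensable, exactly as you suspect --- and you construct none. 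Two further defects in your closing plan: first, $\sigma(x)=x^{\ast}$ is conjugate-linear, hence not an element of $G=\mathrm{Aut}(\mathfrak{g})$, and it does not fix $\mathfrak{a}$ pointwise (it carries $\mathfrak{a}$ to a Cartan subspace of $\mathfrak{g}_{\zeta^{-1}}$), so it cannot directly supply elements of $C_{G}(\mathfrak{a})$; second, even granting $\mathrm{Lie}\,Z(C_{G}(\mathfrak{a}))=\mathfrak{t}_{\mathfrak{a}}$, your final inference is invalid --- connectedness of $T_{\mathfrak{a}}$ only yields $Z(C_{G}(\mathfrak{a}))^{o}=T_{\mathfrak{a}}$, and you must still exclude torsion central elements off the torus, since the center of a possibly disconnected group has no a priori reason to be connected. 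Thus one of the three assertions of the theorem, that $T_{\mathfrak{a}}$ \emph{is} the center of $C_{G}(\mathfrak{a})$, remains unproved in your proposal; for that step one must supply the construction, as in \cite{Vinberg}, section 3.1.
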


\subsection{The critical set revisited}

Let $\mathfrak{a}$ be a Cartan subspace of $V$ ($=\mathfrak{g}_{\zeta}$). Let
$x\in\mathfrak{a}^{\prime}$. Since $Hx$ is closed there exists $y\in
Crit(V)\cap Hx$. Write $y=gx.$ We replace $x$ with $y$ and $\mathfrak{a}$ with
$g\mathfrak{a}$. Thus we may assume that $x\in\mathfrak{a}^{\prime}$ is
critical. Since $x$ is critical we have $[x,x^{\ast}]=0$. Noting that
\textrm{$Lie$}$(T_{\mathfrak{a}})_{\zeta^{-1}}$ is a Cartan subspace for the
Vinberg pair $(L_{|\mathfrak{g}_{\zeta^{-1}}},\mathfrak{g}_{\zeta^{-1}})$ we
see that $C_{\mathfrak{g}}(\mathfrak{a})_{\zeta^{-1}}=\mathrm{Lie}%
(T_{\mathfrak{a}})_{\zeta^{-1}}\oplus\mathfrak{u}$ with $\mathfrak{u}$
consisting of nilpotent elements. This implies that $x^{\ast}\in
\mathrm{Lie}(T_{\mathfrak{a}})\cap\mathfrak{g}_{\zeta^{-1}}$. Hence
Lie$(T_{\mathfrak{a}})\cap\mathfrak{g}_{\zeta^{-1}}$ is contained in the set
of semi-simple elements in the centralizer of $x^{\ast}$ in $\mathfrak{g}%
_{\zeta^{-1}}$ which is $\mathfrak{a}^{\ast}$. Recalling that $\dim
\mathrm{Lie}(T_{\mathfrak{a}})\cap\mathfrak{g}_{\zeta^{-1}}=\dim
\mathfrak{a}=\dim\mathfrak{a}^{\ast}$. We have proved

\begin{proposition}
We may choose a Cartan sub-algebra, $\mathfrak{a}\subset V$ such that
$[\mathfrak{a},\mathfrak{a}^{\ast}]=0$.
\end{proposition}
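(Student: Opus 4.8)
The plan is to arrange, after replacing $\mathfrak{a}$ by a suitable $H$--conjugate, that $\mathfrak{a}$ contains a regular critical element, and then to force the whole of $\mathfrak{a}^{\ast}$ into the abelian Lie algebra $\mathfrak{t}_{\mathfrak{a}}=\mathrm{Lie}(T_{\mathfrak{a}})$; commutativity $[\mathfrak{a},\mathfrak{a}^{\ast}]=0$ is then immediate since $\mathfrak{a}\subset\mathfrak{t}_{\mathfrak{a}}$ as well. First I would fix $x\in\mathfrak{a}^{\prime}$. Every element of a Cartan subspace is semi--simple, so by result 1 of Section \ref{Resuklts-1} the orbit $Hx$ is closed, and part 3 of the Kempf--Ness theorem yields a critical $y=gx\in Hx$. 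Replacing $x$ by $y$ and $\mathfrak{a}$ by the still--Cartan subspace $g\mathfrak{a}$ (regularity being preserved under the $H$--action), I may assume $x\in\mathfrak{a}^{\prime}$ is critical, i.e. $[x,x^{\ast}]=0$. Using the torus theorem I would then identify $\mathfrak{a}$ with $\mathfrak{t}_{\mathfrak{a}}\cap\mathfrak{g}_{\zeta}$: by construction $\mathfrak{a}\subset\mathfrak{t}_{\mathfrak{a}}$, both are Cartan subspaces of $V$, and result 3 forces equality by dimension; the same theorem gives that $\mathfrak{t}_{\mathfrak{a}}\cap\mathfrak{g}_{\zeta^{-1}}$ is a Cartan subspace of $(L_{|\mathfrak{g}_{\zeta^{-1}}},\mathfrak{g}_{\zeta^{-1}})$, of dimension $\dim\mathfrak{a}=\dim\mathfrak{a}^{\ast}$.

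The core step is to locate $x^{\ast}$ inside $\mathfrak{t}_{\mathfrak{a}}$. Since $x$ is regular, $C_{\mathfrak{g}}(x)=C_{\mathfrak{g}}(\mathfrak{a})$, and $[x,x^{\ast}]=0$ places $x^{\ast}\in C_{\mathfrak{g}}(\mathfrak{a})$; as $x^{\ast}\in\mathfrak{g}_{\zeta^{-1}}$, I would invoke the $\zeta^{-1}$--analogue of the decomposition $C_{\mathfrak{g}}(\mathfrak{a})\cap V=\mathfrak{a}\oplus\mathfrak{n}$, namely $C_{\mathfrak{g}}(\mathfrak{a})\cap\mathfrak{g}_{\zeta^{-1}}=(\mathfrak{t}_{\mathfrak{a}}\cap\mathfrak{g}_{\zeta^{-1}})\oplus\mathfrak{u}$ with $\mathfrak{u}$ consisting of nilpotents. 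Because $T_{\mathfrak{a}}$ is the center of $C_{G}(\mathfrak{a})$, the space $\mathfrak{t}_{\mathfrak{a}}$ commutes with all of $C_{\mathfrak{g}}(\mathfrak{a})$, so the expression of $x^{\ast}$ relative to this sum is a Jordan decomposition; since $x^{\ast}$ is semi--simple (the adjoint $\ast$ preserves semi--simplicity) its $\mathfrak{u}$--component must vanish, whence $x^{\ast}\in\mathfrak{t}_{\mathfrak{a}}\cap\mathfrak{g}_{\zeta^{-1}}$.

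Finally I would upgrade this from the single vector $x^{\ast}$ to all of $\mathfrak{a}^{\ast}$. Applying $\ast$ to $C_{\mathfrak{g}}(\mathfrak{a})\cap V=\mathfrak{a}\oplus\mathfrak{n}$ and using that the semi--simple elements of $C_{\mathfrak{g}}(\mathfrak{a})\cap V$ are exactly $\mathfrak{a}$ (by maximality in condition c): any semi--simple $z$ in this space commutes with $\mathfrak{a}$, so $\mathfrak{a}+\mathbb{C}z$ is an abelian space of semi--simple elements, forcing $z\in\mathfrak{a}$) I get that the semi--simple elements of $C_{\mathfrak{g}}(x^{\ast})\cap\mathfrak{g}_{\zeta^{-1}}=(C_{\mathfrak{g}}(\mathfrak{a})\cap V)^{\ast}$ are precisely $\mathfrak{a}^{\ast}$. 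On the other hand $\mathfrak{t}_{\mathfrak{a}}\cap\mathfrak{g}_{\zeta^{-1}}$ consists of semi--simple elements and, as $x^{\ast}\in\mathfrak{t}_{\mathfrak{a}}$ with $\mathfrak{t}_{\mathfrak{a}}$ abelian, lies in $C_{\mathfrak{g}}(x^{\ast})$; hence $\mathfrak{t}_{\mathfrak{a}}\cap\mathfrak{g}_{\zeta^{-1}}\subseteq\mathfrak{a}^{\ast}$, and the equality of dimensions forces $\mathfrak{a}^{\ast}=\mathfrak{t}_{\mathfrak{a}}\cap\mathfrak{g}_{\zeta^{-1}}$. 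Since $\mathfrak{a}$ and $\mathfrak{a}^{\ast}$ now both lie in the abelian $\mathfrak{t}_{\mathfrak{a}}$, we obtain $[\mathfrak{a},\mathfrak{a}^{\ast}]=0$.

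The hard part will be the middle step: translating the single scalar condition $[x,x^{\ast}]=0$ into structural control of $x^{\ast}$. This is where one must simultaneously exploit regularity ($C_{\mathfrak{g}}(x)=C_{\mathfrak{g}}(\mathfrak{a})$), the centrality of $\mathfrak{t}_{\mathfrak{a}}$ in $C_{\mathfrak{g}}(\mathfrak{a})$ (to read the decomposition as a Jordan decomposition), and the fact that $\ast$ interchanges $\mathfrak{g}_{\zeta}$ with $\mathfrak{g}_{\zeta^{-1}}$ while preserving semi--simplicity and nilpotency; the subsequent dimension count that promotes $x^{\ast}$ to the full space $\mathfrak{a}^{\ast}$ is then comparatively routine.
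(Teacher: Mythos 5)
Your argument is correct and is essentially the paper's own proof: pass via Kempf--Ness to a critical regular $x\in\mathfrak{a}^{\prime}$, use $C_{\mathfrak{g}}(x)=C_{\mathfrak{g}}(\mathfrak{a})$ together with the decomposition $C_{\mathfrak{g}}(\mathfrak{a})\cap\mathfrak{g}_{\zeta^{-1}}=(\mathfrak{t}_{\mathfrak{a}}\cap\mathfrak{g}_{\zeta^{-1}})\oplus\mathfrak{u}$ to place $x^{\ast}$ in $\mathfrak{t}_{\mathfrak{a}}$, and then identify $\mathfrak{t}_{\mathfrak{a}}\cap\mathfrak{g}_{\zeta^{-1}}$ with $\mathfrak{a}^{\ast}$ via the semisimple elements of the centralizer of $x^{\ast}$ and the dimension count. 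Your explicit Jordan-decomposition justification (using centrality of $\mathfrak{t}_{\mathfrak{a}}$ in $C_{\mathfrak{g}}(\mathfrak{a})$) and the maximality argument showing the semisimple elements of $C_{\mathfrak{g}}(x^{\ast})\cap\mathfrak{g}_{\zeta^{-1}}$ are exactly $\mathfrak{a}^{\ast}$ simply spell out steps the paper leaves implicit.
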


\begin{proposition}
$Crit(V)=K_{H}\mathfrak{a}$.
\end{proposition}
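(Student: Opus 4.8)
The plan is to prove the two inclusions separately, using the Cartan subspace $\mathfrak{a}$ furnished by the preceding proposition (so that $[\mathfrak{a},\mathfrak{a}^{\ast}]=0$), together with the characterization $\mathrm{Crit}(V)=\{x\in V\mid[x,x^{\ast}]=0\}$ and the four parts of the Kempf--Ness theorem.

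For the inclusion $K_{H}\mathfrak{a}\subseteq\mathrm{Crit}(V)$, I would first observe that every $h\in\mathfrak{a}$ is critical: since $h\in\mathfrak{a}$ and $h^{\ast}\in\mathfrak{a}^{\ast}$, the hypothesis $[\mathfrak{a},\mathfrak{a}^{\ast}]=0$ gives $[h,h^{\ast}]=0$, so $h\in\mathrm{Crit}(V)$ by the lemma characterizing the critical set. To extend this to $K_{H}\mathfrak{a}$, I would avoid computing $(kh)^{\ast}$ directly and instead argue with norms: for $k\in K_{H}$ the operator $k$ is unitary on $V$, so $\left\Vert kh\right\Vert =\left\Vert h\right\Vert $, while $k\in H$ gives $H(kh)=Hh$. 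Since $h$ is critical, part 1 of Kempf--Ness says $\left\Vert h\right\Vert $ is the minimal norm on the orbit $Hh$; hence $kh$ also attains this minimal norm on its orbit and is therefore critical, again by part 1. This yields $K_{H}\mathfrak{a}\subseteq\mathrm{Crit}(V)$.

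For the reverse inclusion, let $x\in\mathrm{Crit}(V)$. By part 4 of Kempf--Ness the orbit $Hx$ is closed, and by result 1 of Section \ref{Resuklts-1} this forces $x$ to be semisimple in $\mathfrak{g}$. A semisimple $x$ spans a one-dimensional abelian subspace of semisimple elements, which extends to a Cartan subspace; by the conjugacy of Cartan subspaces (result 3) there is $g\in H$ with $g^{-1}x\in\mathfrak{a}$. Now $g^{-1}x\in\mathfrak{a}\subseteq\mathrm{Crit}(V)$ by the first inclusion, so both $x$ and $g^{-1}x$ are critical elements of the single orbit $Hx$; by part 1 they both attain the minimal norm on $Hx$, whence $\left\Vert g^{-1}x\right\Vert =\left\Vert x\right\Vert $. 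Part 2 of Kempf--Ness then identifies the minimal-norm vectors in $Hx$ with the single orbit $K_{H}x$, so $g^{-1}x=kx$ for some $k\in K_{H}$, giving $x=k^{-1}(g^{-1}x)\in K_{H}\mathfrak{a}$.

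I expect the only delicate point to be the passage from $H$-conjugacy to $K_{H}$-conjugacy: Vinberg's conjugacy theorem only places $x$ in the same $H$-orbit as a point of $\mathfrak{a}$, and it is precisely the uniqueness statement in part 2 of Kempf--Ness---that the minimal-norm locus of a closed orbit is a single $K_{H}$-orbit---that upgrades this to the desired $K_{H}$-conjugacy. The enabling observation, that $g^{-1}x$ is again critical, is what lets me invoke part 2, and it rests on having chosen $\mathfrak{a}$ with $[\mathfrak{a},\mathfrak{a}^{\ast}]=0$.
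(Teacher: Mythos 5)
Your proof is correct and takes essentially the same route as the paper's: $\mathfrak{a}\subset Crit(V)$ via the choice with $[\mathfrak{a},\mathfrak{a}^{\ast}]=0$, then for critical $x$ the closedness of $Hx$ (Kempf--Ness part 4) yields $g\in H$ with $g^{-1}x\in\mathfrak{a}$, and equality of the minimal norms together with part 2 upgrades $H$-conjugacy to $K_{H}$-conjugacy. The only cosmetic differences are that you obtain the intersection of the closed orbit with $\mathfrak{a}$ from results 1 and 3 of Section \ref{Resuklts-1} (extending the semisimple element to a Cartan subspace) where the paper invokes result 4 ($H\mathfrak{a}$ is the union of the closed orbits) directly, and that you spell out the $K_{H}$-stability of $Crit(V)$, which the paper leaves implicit.
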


\begin{proof}
The above lemma implies that $\mathfrak{a}\subset Crit(V)$. Suppose that $x\in
Crit(V)$ then $Hx$ is closed. Thus there exists $g\in H$ such that
$gx=y\in\mathfrak{a}$. Thus $\left\Vert x\right\Vert =\left\Vert y\right\Vert
$ (since both are critical). Hence there exists $k\in K_{H}$ such that $ky=x$
by the Kempf-Ness theorem..
\end{proof}

\begin{proposition}
If $w\in W_{H}(\mathfrak{a})$ then there exists $k\in K_{H}$ such that
$k_{|\mathfrak{a}}=w$.
\end{proposition}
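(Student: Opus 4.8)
The plan is to produce the desired $k$ by a single application of the Kempf--Ness theorem and then to upgrade the resulting pointwise agreement on a generic vector into agreement on all of $\mathfrak{a}$. Write $w=g_{|\mathfrak{a}}$ for some $g\in N_{H}(\mathfrak{a})$, and recall from the proof of $Crit(V)=K_{H}\mathfrak{a}$ that $\mathfrak{a}\subset Crit(V)$ (this is where the choice $[\mathfrak{a},\mathfrak{a}^{\ast}]=0$ is used). First I would fix a vector $x\in\mathfrak{a}$ that is simultaneously regular, i.e. $x\in\mathfrak{a}'$, and avoids the fixed-point subspaces $\mathfrak{a}^{\sigma}$ of every non-identity $\sigma\in W_{H}(\mathfrak{a})$; such $x$ exists because $\mathfrak{a}'$ is the complement of finitely many root hyperplanes and, since $W_{H}(\mathfrak{a})$ acts faithfully on $\mathfrak{a}$ by construction, each $\mathfrak{a}^{\sigma}$ with $\sigma\neq e$ is a proper subspace, so the two bad loci are proper and their union is not all of $\mathfrak{a}$. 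Now $wx=gx\in\mathfrak{a}\subset Crit(V)$, and $gx\in Hx$. Since $x$ is critical, part~1 of the Kempf--Ness theorem gives $\left\Vert gx\right\Vert \geq\left\Vert x\right\Vert$; since $gx$ is critical and $x=g^{-1}(gx)$, the same part gives $\left\Vert x\right\Vert \geq\left\Vert gx\right\Vert$, so $\left\Vert gx\right\Vert =\left\Vert x\right\Vert$. Then part~2 produces $k\in K_{H}$ with $kx=gx=wx$.

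Next I would show that this $k$ normalizes $\mathfrak{a}$, so that $k\in N_{K_{H}}(\mathfrak{a})$ and $k_{|\mathfrak{a}}\in W_{H}(\mathfrak{a})$. Since $k$ is an automorphism of $\mathfrak{g}$, it carries $C_{\mathfrak{g}}(x)$ to $C_{\mathfrak{g}}(kx)$; but $x$ and $kx=wx$ both lie in $\mathfrak{a}'$ (as $w$ permutes $\Sigma(\mathfrak{a})$ and hence preserves $\mathfrak{a}'$), so $C_{\mathfrak{g}}(x)=C_{\mathfrak{g}}(\mathfrak{a})=C_{\mathfrak{g}}(kx)$ and $k$ normalizes $C_{\mathfrak{g}}(\mathfrak{a})$. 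Because $k\in H=L_{|V}$ preserves $V=\mathfrak{g}_{\zeta}$, it preserves $C_{\mathfrak{g}}(\mathfrak{a})\cap V=\mathfrak{a}\oplus\mathfrak{n}$ from the theorem of the preceding subsection. Finally I would identify the semisimple elements of $\mathfrak{a}\oplus\mathfrak{n}$ with $\mathfrak{a}$ itself: since $\mathfrak{a},\mathfrak{n}\subset C_{\mathfrak{g}}(\mathfrak{a})$ we have $[\mathfrak{a},\mathfrak{n}]=0$, so for $a\in\mathfrak{a}$ semisimple and $n\in\mathfrak{n}$ nilpotent the Jordan decomposition of $a+n$ has semisimple part $a$ and nilpotent part $n$; hence $a+n$ is semisimple exactly when $n=0$. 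As $k$ respects Jordan decomposition, $k\mathfrak{a}$ equals the set of semisimple elements of $k(\mathfrak{a}\oplus\mathfrak{n})=\mathfrak{a}\oplus\mathfrak{n}$, which is $\mathfrak{a}$.

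To conclude, set $\sigma=w^{-1}(k_{|\mathfrak{a}})\in W_{H}(\mathfrak{a})$; then $\sigma x=w^{-1}(kx)=w^{-1}(wx)=x$, so $\sigma$ fixes the point $x$, which by construction lies in no $\mathfrak{a}^{\sigma'}$ with $\sigma'\neq e$. Therefore $\sigma=e$, i.e. $k_{|\mathfrak{a}}=w$, which is exactly the assertion. I expect the main obstacle to be the normalization step: the delicate point is passing from the single identity $kx=wx$ to the statement $k\mathfrak{a}=\mathfrak{a}$, which forces one to invoke the structure theorem $C_{\mathfrak{g}}(\mathfrak{a})\cap V=\mathfrak{a}\oplus\mathfrak{n}$ together with the Jordan-decomposition characterization of $\mathfrak{a}$; once $k$ is known to normalize $\mathfrak{a}$, the rigidity at a generic regular point is automatic from faithfulness of the finite group $W_{H}(\mathfrak{a})$.
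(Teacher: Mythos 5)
Your proof is correct and follows essentially the same route as the paper: choose a generic $x\in\mathfrak{a}$, use parts 1 and 2 of the Kempf--Ness theorem to replace $h$ (with $h_{|\mathfrak{a}}=w$) by $k\in K_{H}$ satisfying $kx=wx$, deduce $k\mathfrak{a}=\mathfrak{a}$ from $C_{\mathfrak{g}}(x)=C_{\mathfrak{g}}(\mathfrak{a})=C_{\mathfrak{g}}(kx)$ together with $C_{\mathfrak{g}}(\mathfrak{a})\cap V=\mathfrak{a}\oplus\mathfrak{n}$, and finish by genericity of $x$. The only (harmless) variations are that the paper identifies $\mathfrak{a}$ inside $\mathfrak{a}\oplus\mathfrak{n}$ as $C_{\mathfrak{g}}(\mathfrak{a})\cap Crit(V)$ where you use the Jordan decomposition, and its genericity condition --- the values $\lambda(x)$ for $\lambda\in\Sigma(\mathfrak{a})\cup\{0\}$ pairwise distinct, yielding rigidity via the permutation action on $\Sigma(\mathfrak{a})$ --- where you instead avoid the fixed-point subspaces of the nontrivial elements of $W_{H}(\mathfrak{a})$.
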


\begin{proof}
Let $x\in\mathfrak{a}$ be such that if $\lambda,\mu\in\Sigma(\mathfrak{a}%
)\cup\{0\}$ then $\lambda(x)=\mu(x)$ implies $\lambda=\mu$. Such an
$x\in\mathfrak{a}$ exists. Indeed, define
\[
S=\{\lambda-\mu|\lambda,\mu\in\Sigma(\mathfrak{a})\cup\{0\},\lambda\neq\mu\}
\]
then $S$ is a finite set in $\mathfrak{a}^{\ast}$ (here the super script means
dual space) and $x$ is an element in $\mathfrak{a}$ such that $\xi(x)\neq0$
for $\xi\in S$. Let $h\in H$ be such that $h_{|\mathfrak{a}}=w$. Then $hx\in
Hx\cap Crit(V)=K_{H}x$. So $hx=kx$ for some $k\in K_{H}$. Now $w^{\ast}%
\Sigma(\mathfrak{a})=\Sigma(\mathfrak{a})$ thus
\[
C_{\mathfrak{g}}(x)\cap Crit(V)=C_{\mathfrak{g}}(\mathfrak{a})\cap
Crit(V)=\mathfrak{a.}%
\]
This implies that $k\mathfrak{a=a}$. Also $k^{-1}hx=x$. Thus the choice of $x$
implies that $k^{-1}h_{|\mathfrak{a}}$ is the identity.
\end{proof}

\subsection{The structure of $X_{h}$ for $h$ generic}

We maintain the notation of the previous subsection and we assume as we may
that $[\mathfrak{a,a}^{\ast}]=0$. The next result uses an argument in
\cite{GoodWall} 12.4.12 in its proof.

We note that since $W_{H}(\mathfrak{a})$ is a subgroup of $GL(\mathfrak{a})$
the set of $x\in\mathfrak{a}$ such that $|W_{H}(\mathfrak{a})x|=|W_{H}%
(\mathfrak{a})|$ is a Zariski open dense subset $\mathfrak{a}^{o}%
\subset\mathfrak{a}$. We note that if $m=2$ then $\mathfrak{a}^{o}%
=\mathfrak{a}^{\prime}$.

\begin{theorem}
If $h\in\mathfrak{a}^{\prime}\cap\mathfrak{a}^{o}$ then $\mathcal{I}_{h}$ is a
radical ideal hence prime.
\end{theorem}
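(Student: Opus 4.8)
The plan is to exhibit $X_{h}=V(\mathcal{I}_{h})$ as a complete intersection, conclude that $\mathcal{O}(V)/\mathcal{I}_{h}$ is Cohen--Macaulay, and then force reducedness from a single smoothness check at the point $h$ itself; the irreducibility of $X_{h}$, already in hand, will upgrade ``radical'' to ``prime.'' This follows the strategy of \cite{GoodWall} 12.4.12.

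First I would record that, by Vinberg's main theorem together with the Shephard--Todd theorem, $\mathcal{O}(V)^{H}$ is a polynomial algebra on $l=\dim\mathfrak{a}$ homogeneous generators $f_{1},\dots,f_{l}$; consequently $\mathcal{I}_{h}$ is generated by the $l$ elements $f_{1}-f_{1}(h),\dots,f_{l}-f_{l}(h)$, since any $f=P(f_{1},\dots,f_{l})$ satisfies $f-f(h)=P(f_{1},\dots,f_{l})-P(f_{1}(h),\dots,f_{l}(h))\in(f_{1}-f_{1}(h),\dots,f_{l}-f_{l}(h))$. By the dimension count of Section \ref{Resuklts-1} (item 5) we have $\dim X_{h}=\dim V-l$, so this ideal has height $l$, generated by $l$ elements; hence $f_{1}-f_{1}(h),\dots,f_{l}-f_{l}(h)$ is a regular sequence in the polynomial ring $\mathcal{O}(V)$ and $\mathcal{O}(V)/\mathcal{I}_{h}$ is Cohen--Macaulay. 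In particular it satisfies Serre's condition $S_{1}$ and has no embedded components. Moreover, since $h\in\mathfrak{a}^{\prime}$, the corollary giving $X_{h}=H(h+\mathfrak{n})$ shows that $X_{h}$ is irreducible.

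It then remains to prove that $\mathrm{Spec}(\mathcal{O}(V)/\mathcal{I}_{h})$ is reduced, for which, being Cohen--Macaulay, it suffices to verify generic reducedness ($R_{0}$), and I would do this by checking regularity at $h$ via the Jacobian criterion. Because $p_{\mathfrak{a}}^{\ast}:\mathcal{O}(V)^{H}\to\mathcal{O}(\mathfrak{a})^{W}$ is an isomorphism, the restrictions $f_{i}|_{\mathfrak{a}}$ form a system of basic invariants of the complex reflection group $W=W_{H}(\mathfrak{a})$. The classical computation of the Jacobian of such a system (it is a nonzero scalar multiple of a product of powers of the linear forms cutting out the reflecting hyperplanes) shows that this Jacobian vanishes exactly on the union of those hyperplanes, i.e. exactly off the full-orbit locus $\mathfrak{a}^{o}$. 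Thus for $h\in\mathfrak{a}^{o}$ the differentials $d(f_{i}|_{\mathfrak{a}})_{h}$ are linearly independent on $\mathfrak{a}$, and since $d(f_{i}|_{\mathfrak{a}})_{h}$ is the restriction to $\mathfrak{a}$ of $d(f_{i})_{h}\in V^{\ast}$, the $d(f_{i})_{h}$ are linearly independent on $V$. Hence the Jacobian matrix of $(f_{1},\dots,f_{l})$ has rank $l$ at $h$, so the Zariski tangent space of the scheme at $h$ has dimension $\dim V-l=\dim X_{h}$; the local ring at $h$ is therefore regular, in particular reduced. Generic reducedness together with the Cohen--Macaulay (hence $S_{1}$) property gives reducedness of $\mathcal{O}(V)/\mathcal{I}_{h}$, i.e. $\mathcal{I}_{h}$ is radical, and combined with the irreducibility of $X_{h}$ this makes $\mathcal{I}_{h}$ prime.

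The main obstacle is the smoothness check at $h$, and specifically the identification of the full-orbit locus $\mathfrak{a}^{o}$ with the nonvanishing locus of the Jacobian of the basic invariants. This is exactly where both hypotheses on $h$ enter essentially: $h\in\mathfrak{a}^{\prime}$ secures the irreducibility of $X_{h}$, while $h\in\mathfrak{a}^{o}$ (equivalently, $h$ lies off every reflecting hyperplane of $W$, by Steinberg's description of point stabilizers in reflection groups) secures the rank of the Jacobian. Everything else --- the regular-sequence/Cohen--Macaulay bookkeeping and the passage from $R_{0}+S_{1}$ to reducedness --- is formal.
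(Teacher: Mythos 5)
Your proposal is correct, and its skeleton is exactly the one the paper imports wholesale from \cite{GoodWall}, Proposition 12.4.12: the ideal $\mathcal{I}_{h}$ is generated by $f_{1}-f_{1}(h),\dots,f_{l}-f_{l}(h)$; by Vinberg's dimension statement (item 5 of Section 2) together with Krull's height theorem every component of $X_{h}$ has codimension exactly $l$, so the generators form a regular sequence and the quotient is Cohen--Macaulay (giving $S_{1}$); irreducibility comes from $X_{h}=H(h+\mathfrak{n})$ for $h\in\mathfrak{a}^{\prime}$; and reducedness then reduces to $R_{0}$, checked by the Jacobian criterion. The one point where you genuinely add something the paper leaves implicit is the crucial nonvanishing step: you pin down the nonvanishing locus of the Jacobian of the basic invariants $u_{i}=f_{i}|_{\mathfrak{a}}$ as exactly $\mathfrak{a}^{o}$, using Steinberg's fixed-point theorem (trivial $W$-stabilizer iff $h$ avoids all reflecting hyperplanes) together with the classical factorization of the Jacobian of a complex reflection group as a scalar times $\prod_{H}\ell_{H}^{e_{H}-1}$. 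The paper never asserts this equality: in the lemma following the theorem it only defines $\mathfrak{a}^{\prime\prime}$ as the subset of $\mathfrak{a}^{\prime}\cap\mathfrak{a}^{o}$ where the Jacobian is nonzero and observes that the Jacobian is not identically zero. So your argument in fact sharpens the paper slightly, showing $\mathfrak{a}^{\prime\prime}=\mathfrak{a}^{\prime}\cap\mathfrak{a}^{o}$, i.e.\ that $X_{h}$ is smooth for every $h\in\mathfrak{a}^{\prime}\cap\mathfrak{a}^{o}$. A second legitimate economy: you verify regularity only at the single point $h$, and pass to the generic point by localization (valid since $X_{h}$ is irreducible and $h\in X_{h}$), whereas the template argument checks the Jacobian rank along all of $X_{h}$ via the computation $df_{g(h+x)}(gv)=\frac{d}{dt}\big|_{t=0}f(h+tv)$; given Cohen--Macaulayness and irreducibility, your single-point check suffices. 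Both hypotheses on $h$ enter exactly where you say they do, and the bookkeeping ($R_{0}+S_{1}$ implies reduced, radical plus irreducible implies prime) is sound.
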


\begin{proof}
The part of the proof of Proposition 12.4.12 in \cite{GoodWall} that shows
that the ideal $\mathcal{I}_{h}$ (in the context of that book) is a radical
ideal that starts on line 10 on p. 603 and continues through line -11 on p.
604 left unchanged in this more general context (except that we must replace
$\mathfrak{a}^{\prime}$ by $\mathfrak{a}^{\prime}\cap\mathfrak{a}^{o}$) proves
the result.
\end{proof}

\begin{lemma}
The set of $h\in\mathfrak{a}^{\prime}\cap\mathfrak{a}^{o}$ such that $X_{h}$
is a smooth affine variety contains a Zariski open dense subset,
$\mathfrak{a}^{\prime\prime}$.
\end{lemma}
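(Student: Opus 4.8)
The plan is to reduce the statement to the classical generic smoothness theorem in characteristic zero, applied to the quotient morphism. First I would set $p\colon V\to V//H$ to be the quotient map and record two facts: the source $V$ is a smooth variety (it is a vector space), and, by 6.\ of Section \ref{Resuklts-1} together with Vinberg's main theorem, $V//H$ is isomorphic as an affine variety with $\mathfrak{a}/W$, where $W=W_{H}(\mathfrak{a})$ is generated by complex reflections; the Shephard--Todd theorem then gives $\mathcal{O}(\mathfrak{a})^{W}$ a polynomial algebra, so $\mathfrak{a}/W\cong\mathfrak{a}$ is an affine space and $V//H$ is smooth. The morphism $p$ is dominant (indeed surjective), so the hypotheses of generic smoothness are in place.

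Next I would invoke generic smoothness over $\mathbb{C}$ for the dominant morphism $p$ from the smooth variety $V$: there is a Zariski dense open subset $U\subset V//H$ such that $p\colon p^{-1}(U)\to U$ is a smooth morphism, whence every scheme-theoretic fiber $p^{-1}(y)$ with $y\in U$ is a smooth, and in particular reduced, affine variety. I would then transport $U$ to $\mathfrak{a}$. Identify $V//H$ with $\mathfrak{a}/W$ through the restriction isomorphism $p_{\mathfrak{a}}^{\ast}\colon\mathcal{O}(V)^{H}\xrightarrow{\ \sim\ }\mathcal{O}(\mathfrak{a})^{W}$, and let $\pi\colon\mathfrak{a}\to\mathfrak{a}/W$ be the quotient map, so that $p_{|\mathfrak{a}}=\pi$ under this identification. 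Since $\pi$ is finite and surjective, $\pi^{-1}(U)$ is Zariski dense open in $\mathfrak{a}$, and therefore
\[
\mathfrak{a}^{\prime\prime}=\pi^{-1}(U)\cap\mathfrak{a}^{\prime}\cap\mathfrak{a}^{o}
\]
is a Zariski dense open subset of $\mathfrak{a}$, being the intersection of finitely many dense open subsets of the irreducible variety $\mathfrak{a}$.

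Finally, for $h\in\mathfrak{a}^{\prime\prime}$ I would observe that $p(h)=\pi(h)\in U$ and that $X_{h}=p^{-1}(p(h))$ as a set. By the preceding theorem the ideal $\mathcal{I}_{h}$ is radical, so $X_{h}$ coincides with the reduced fiber $p^{-1}(p(h))$, which by the choice of $U$ is smooth; hence $X_{h}$ is a smooth affine variety for every $h\in\mathfrak{a}^{\prime\prime}$, proving the lemma. The substantive input is the generic smoothness theorem, which is exactly where the characteristic-zero hypothesis is used; the only points requiring care are the identification of $X_{h}$ with the scheme-theoretic fiber of $p$ (this is precisely where the prior radicality theorem enters, so that ``smooth fiber'' and ``smooth variety $X_h$'' agree) and the verification that the good locus $U$ pulls back under the finite map $\pi$ to a dense open subset of $\mathfrak{a}$ still meeting $\mathfrak{a}^{\prime}\cap\mathfrak{a}^{o}$. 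I expect the fiber identification to be the main conceptual obstacle, since without radicality one would only know that the reduced structure on $X_h$ agrees generically with the possibly non-reduced fiber.
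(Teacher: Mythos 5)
Your proposal is correct, but it takes a genuinely different route from the paper. The paper proves the lemma by an explicit Jacobian computation: using Vinberg's structure result $X_{h}=H(h+\mathfrak{n})$, every point of $X_{h}$ has the form $z=g(h+x)$ with $x\in\mathfrak{n}$ nilpotent and commuting with $\mathfrak{a}$, and for homogeneous algebraically independent generators $f_{1},\dots,f_{r}$ of $\mathcal{O}(V)^{H}$ one computes $df_{g(h+x)}(gv)=\frac{d}{dt}_{t=0}f(h+tv)$ for $v\in\mathfrak{a}$, so that $(df_{1_{z}}\wedge\cdots\wedge df_{r_{z}})(gv_{1},\dots,gv_{s})=\det\left(\frac{\partial u_{i}}{\partial x_{j}}(h)\right)$ with $u_{i}=f_{i|\mathfrak{a}}$; algebraic independence of the $u_{i}$ on $\mathfrak{a}$ makes this Jacobian not identically zero, and $\mathfrak{a}^{\prime\prime}$ is defined concretely as its nonvanishing locus inside $\mathfrak{a}^{\prime}\cap\mathfrak{a}^{o}$, giving $\dim T_{z}(X_{h})=\dim V-r$ at every point $z$. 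You instead invoke generic smoothness in characteristic zero for $p:V\rightarrow V//H$, which is legitimate: $V$ is smooth, $p$ is surjective, and the scheme-theoretic fiber over $p(h)$ is cut out exactly by $\mathcal{I}_{h}$, since the maximal ideal of $p(h)$ in $\mathcal{O}(V)^{H}=\mathbb{C}[f_{1},\dots,f_{r}]$ is generated by the $f_{i}-f_{i}(h)$ (a point you gloss over but which is immediate); your transport of the good locus through the finite surjection $\pi:\mathfrak{a}\rightarrow\mathfrak{a}/W$ is also fine. One small remark: your appeal to the preceding radicality theorem is actually superfluous for your argument, since a smooth scheme-theoretic fiber is automatically reduced, so on your $\mathfrak{a}^{\prime\prime}$ the identification of $X_{h}$ with the fiber comes for free; the radicality theorem matters in the paper because it holds on all of $\mathfrak{a}^{\prime}\cap\mathfrak{a}^{o}$, not merely generically, and is needed later in the multiplicity argument. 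The trade-off between the two proofs: the paper's computation yields an explicit $\mathfrak{a}^{\prime\prime}$ (the complement of a Jacobian hypersurface) and leans on the description $X_{h}=H(h+\mathfrak{n})$ that is reused immediately in Theorem \ref{bundle}, while yours is shorter and computation-free but non-constructive; since the downstream results use only that $\mathfrak{a}^{\prime\prime}$ is Zariski open dense in $\mathfrak{a}$ and contained in $\mathfrak{a}^{\prime}\cap\mathfrak{a}^{o}$ with $X_{h}$ smooth, your version suffices for the rest of the paper. Note finally that both arguments use characteristic zero essentially: generic smoothness fails in characteristic $p$, and so does the implication from algebraic independence to generic nonvanishing of the Jacobian, which is consistent with the paper's remark that extensions to positive characteristic would require deeper \'{e}tale-theoretic input.
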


\begin{proof}
We have seen that $X_{h}=H(h+\mathfrak{n})$. Let $f_{1},...,f_{r}$ be
algebraically independent homogeneous generators for $\mathcal{O}(V)^{H}$. We
note that if $g\in H,$ $h\in\mathfrak{a}^{\prime}$, $v\in\mathfrak{a,}$
$f\in\mathcal{O}(V)^{H}$ and $x\in\mathfrak{n}$ then
\[
df_{g(h+x)}(gv)=\frac{d}{dt_{t=0}}f(g(h+x+tv))=
\]%
\[
\frac{d}{dt_{t=0}}f(h+tv+x)=\frac{d}{dt_{t=0}}f(h+tv)
\]
since $h+tv$ is semi-simple and $x$ is nilpotent and $[x,h+tv]=0$. Thus if
$z=g(h+x),$ if $u_{j}=f_{j|\mathfrak{a}}$ and if $v_{1},...,v_{s}$ is a basis
of $\mathfrak{a}$ with corresponding linear coordinates $x_{1},...,x_{n}$ then
we we have
\[
(df_{1_{z}}\wedge\cdots\wedge df_{r_{z}})(gv_{1},...,gv_{s})=\det
(\frac{\partial u_{i}}{\partial x_{j}}(h)).
\]
Now $u_{1},...,u_{r}$ are algebraically independent on $\mathfrak{a}$ so the
Jacobian criterion implies that the polynomial $\det(\frac{\partial u_{i}%
}{\partial x_{j}})$ is not identically $0$ on $\mathfrak{a}$. Take
$\mathfrak{a}^{\prime\prime}=\{h\in\mathfrak{a}^{\prime}\cap\mathfrak{a}%
^{o}|\det(\frac{\partial u_{i}}{\partial x_{j}})(h)\neq0\}$. If $h\in
\mathfrak{a}^{\prime\prime}$ then
\[
\dim(T_{z}(X_{h})=\dim V-r
\]
for all $z\in X_{h}$.
\end{proof}

We note that if $m=2$ and if $h\in\mathfrak{a}^{\prime}$ then $X_{h}=Hh$ so
the lemma above is obvious in this case.

We set $M=C_{H}(\mathfrak{a})$ and define for $m\in M$, $g\in H$,
$x\in\mathfrak{n}$, $(g,x)m=(gm,m^{-1}x)$. Then $\left(  H\times
\mathfrak{n}\right)  /M$ is the vector bundle $H\times_{M}\mathfrak{n}$ over
$H/M.$

\begin{theorem}
\label{bundle}Fix $h\in\mathfrak{a}^{\prime\prime}$. If we define $\Psi
_{h}:H\times\mathfrak{n}\rightarrow X_{h}$ by $\Psi_{h}(g,x)=g(h+x)$ then
$\Psi_{h}(g,x)$ depends only on $(g,x)M$ and the induced map of $H\times
_{M}\mathfrak{n}$ to $X_{h}$ is an isomorphism of algebraic varieties.
\end{theorem}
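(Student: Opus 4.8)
The plan is to treat the two assertions in turn: the factorization of $\Psi_h$ through $H\times_M\mathfrak{n}$ is a formal check, while the bulk of the work is to show that the resulting morphism $\psi_h\colon H\times_M\mathfrak{n}\to X_h$ is an isomorphism, which I would do by proving it is a bijective morphism onto a smooth (hence normal) variety and then invoking Zariski's Main Theorem. For the factorization I would simply verify $M$-invariance: for $m\in M=C_H(\mathfrak{a})$ one has $mh=h$, so $\Psi_h(gm,m^{-1}x)=gm(h+m^{-1}x)=g(mh)+gx=g(h+x)=\Psi_h(g,x)$, and hence $\Psi_h$ descends to a morphism $\psi_h$ on $H\times_M\mathfrak{n}$.

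Surjectivity of $\psi_h$ is immediate from the corollary $X_h=H(h+\mathfrak{n})$ (valid because $h\in\mathfrak{a}'$). The heart of the argument is injectivity, which I would establish via Jordan decomposition. Suppose $g(h+x_1)=h+x_2$ with $g\in H$ and $x_1,x_2\in\mathfrak{n}$. Since $h$ is semisimple, each $x_i$ nilpotent, and $[h,x_i]=0$ (as $\mathfrak{n}\subset C_{\mathfrak{g}}(\mathfrak{a})$), the expression $h+x_i$ is the additive Jordan decomposition, with semisimple part $h$ and nilpotent part $x_i$; as $g$ preserves Jordan decompositions this forces $gh=h$ and $gx_1=x_2$. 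Because $h\in\mathfrak{a}'$ we have $C_{\mathfrak{g}}(h)=C_{\mathfrak{g}}(\mathfrak{a})$, so $g$ stabilizes $C_{\mathfrak{g}}(\mathfrak{a})\cap V=\mathfrak{a}\oplus\mathfrak{n}$; since $\mathfrak{a}$ is exactly the set of semisimple elements of this space, $g\mathfrak{a}=\mathfrak{a}$ and $w:=g_{|\mathfrak{a}}\in W_H(\mathfrak{a})$. Finally $gh=h$ together with $h\in\mathfrak{a}^o$ (on which $W_H(\mathfrak{a})$ acts freely) gives $w=1$, i.e.\ $g\in M$; a short rearrangement then shows the two representatives agree in $H\times_M\mathfrak{n}$.

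Once bijectivity is in hand I would finish as follows. The source $H\times_M\mathfrak{n}$ is irreducible (since $H$ is connected) and smooth (a vector bundle over the homogeneous space $H/M$), and the target $X_h$ is irreducible and, by the choice $h\in\mathfrak{a}''$, smooth and therefore normal. Over $\mathbb{C}$ a bijective dominant morphism of irreducible varieties has degree one, hence is birational; Zariski's Main Theorem applied to the quasi-finite birational morphism $\psi_h$ onto the normal variety $X_h$ then realizes it as an open immersion, and surjectivity upgrades this to an isomorphism. Alternatively, one could show directly that $d\psi_h$ is everywhere injective and compare dimensions using the tangent-space count $\dim X_h=\dim V-r$ of the previous lemma, concluding that $\psi_h$ is \'etale and bijective, hence an isomorphism.

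I expect the main obstacle to be the injectivity step, and within it the passage from ``$g$ fixes $h$'' to ``$g\in M$''. This is precisely where both genericity hypotheses are used: $h\in\mathfrak{a}'$ to identify $C_{\mathfrak{g}}(h)$ with $C_{\mathfrak{g}}(\mathfrak{a})$ and thereby force $g\mathfrak{a}=\mathfrak{a}$, and $h\in\mathfrak{a}^o$ to eliminate the residual element of $W_H(\mathfrak{a})$. The smoothness hypothesis $h\in\mathfrak{a}''$ enters only at the end, supplying the normality of $X_h$ that converts a bijective morphism into an isomorphism.
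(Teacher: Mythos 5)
Your proof is correct, and it reaches the isomorphism by a genuinely different route from the paper's. The factorization through $H\times_{M}\mathfrak{n}$ and the injectivity argument agree in substance with the paper (Jordan decomposition forcing $gh=h$ and $gx_{1}=x_{2}$), and you are in fact more careful than the paper at the delicate point: the paper passes from $g^{-1}g^{\prime}h=h$ to $g^{\prime}\in gM$ without comment, while you correctly isolate the two genericity inputs --- $h\in\mathfrak{a}^{\prime}$ to get $C_{\mathfrak{g}}(h)=C_{\mathfrak{g}}(\mathfrak{a})$ and hence $g\mathfrak{a}=\mathfrak{a}$ via the characterization of $\mathfrak{a}$ as the semisimple elements of $\mathfrak{a}\oplus\mathfrak{n}$, and $h\in\mathfrak{a}^{o}$ to kill the residual element of $W_{H}(\mathfrak{a})$ --- both available since $\mathfrak{a}^{\prime\prime}\subset\mathfrak{a}^{\prime}\cap\mathfrak{a}^{o}$ by construction. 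Where you diverge is the globalization. The paper computes the differential $(d\Psi_{h})_{g,x}(X,v)=g(X(h+x)+v)$ explicitly and shows via the Killing-form pairing with $\mathfrak{g}_{\zeta^{-1}}$ that its image has dimension $\dim V-\dim\mathfrak{a}$ everywhere (any $y$ annihilating the image satisfies $[h+x,y]=0$, so lies in $C_{\mathfrak{g}}(\mathfrak{a})_{\zeta^{-1}}$, and orthogonality to $\mathfrak{n}$ puts it in $\mathfrak{a}^{\ast}$); it then concludes that the map is biholomorphic by the inverse function theorem, proves birationality separately by matching stabilizers on the open orbits, and finishes with an auxiliary lemma (a regular, biholomorphic, birational map of smooth irreducible affine varieties is a regular isomorphism, resting on the fact that a rational function holomorphic at a smooth point is regular). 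You instead stay entirely algebraic: in characteristic zero bijectivity already gives degree one, hence birationality, and Zariski's Main Theorem plus normality of the smooth $X_{h}$ upgrades the bijective birational morphism to an open immersion, hence an isomorphism. Your route is shorter, avoids all complex-analytic input, and consumes the hypothesis $h\in\mathfrak{a}^{\prime\prime}$ only through normality of $X_{h}$; the paper's route costs the explicit rank computation but yields the tangent-space information directly --- which is precisely what your sketched \'{e}tale alternative would require, so that alternative would essentially reproduce the paper's computation.
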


\begin{proof}
Since $h\in\mathfrak{a}^{\prime\prime}$, $X_{h}$ is smooth hence it is a
complex manifold of dimension $n=\dim V-\dim\mathfrak{a}$. We also note that
$H\times_{M}\mathfrak{n}$ is also a smooth variety of the same dimension.
Suppose $\Psi_{h}(g,x)=\Psi_{h}(g^{\prime},x^{\prime})$ with $g,g^{\prime}\in
H$ and $x,x^{\prime}\in\mathfrak{n}$ then $g(h+x)=g^{\prime}(h+x^{\prime})$.
This implies (using The Jordan decomposition) that $gh=g^{\prime}h$. Thus
$g^{-1}g^{\prime}h=h$. So $g^{\prime}=gm$ with $m\in M$. Also $gx=g^{\prime
}x^{\prime}$. Thus $g^{\prime}(h+x^{\prime})=gm(h+m^{-1}x)$. This implies that%
\[
\Psi_{h}:H\times_{M}\mathfrak{n}\rightarrow X_{h}%
\]
is regular and bijective.

We calculate the differential of $\Psi_{h}$ at $g,x$ for $g\in H$ and
$x\in\mathfrak{n}$. Let $X\in\mathrm{Lie}(H)$ and $v\in\mathfrak{n}$. Then%
\[
\left(  d\Psi_{h}\right)  _{g,x}(X,v)=g(X(h+x)+v).
\]
We assert that the dimension of the image of $(d\Psi_{h})_{g,x}$ is $\dim
V-\dim\mathfrak{a}$ for all $g\in H$ and $x\in\mathfrak{n}$. It is clearly
enough to prove this for $g=I$. Let $y\in\mathfrak{g}_{\zeta^{-1}}$ be such
that $B(y,z)=0$ for all $z=X(h+x)+v$ as above. Then
\[
0=B(y,[X,h+x])=B([h+x,y],X)
\]
for all $X\in\mathrm{Lie}(H)$. But $[h+x,y]\in\mathrm{Lie}(H)$ so this implies
that $[h+x,y]=0$. This implies $y\in C_{\mathfrak{g}}(h)_{\zeta^{-1}%
}=C_{\mathfrak{g}}(\mathfrak{a)}_{\zeta^{-1}}$ since $(h+x)_{s}=h$ and
$h\in\mathfrak{a}^{\prime}$. Also $B(y,\mathfrak{n})=0$ implies that
$y\in\mathfrak{a}^{\ast}$. This implies the dimension estimate. We therefore
see that $\left(  d\Psi_{h}\right)  _{g,x}$ is of maximal rank for all $g\in
H,x\in\mathfrak{n}$ so the inverse function theorem implies that
\[
\Psi_{h}:H\times_{M}\mathfrak{n}\rightarrow X_{h}%
\]
is biholomorphic. We assert that $\Psi_{h}$ is also birational. Indeed, if
$x\in\mathfrak{n}$ is such that $H(h+x)$ is open in $X_{h}$ then if $g\in H$
is such that $g(h+x)=h+x$ then the uniqueness of the Jordan decomposition
implies that $gh=h$ and $gx=x.$ Thus $g\in C_{H}(\mathfrak{a})_{x}$. Thus the
open orbit is biregularly isomorphic with $H/C_{H}(\mathfrak{a})_{x}$. We now
consider the same $x$ and the orbit under $H$ of $(e,x)$ in $H\times
_{C_{H}(\mathfrak{a})}\mathfrak{n}$. The stabilizer is the set of $g\in H$
such that $g\in C_{H}(\mathfrak{a})$ and $gx=x$. Thus it is exactly the same.
Also $\dim X_{h}=\dim H\times_{H}\mathfrak{n}$ so the orbit of $(e,x)M$ is
$\mathrm{Zariski}$ open and is regularly isomorphic to the open orbit in
$V_{h}$ under the map $\Psi_{h}$. Thus $\Psi_{h}$ is a birational isomorphism.
The result now follows from the following lemma.
\end{proof}

\begin{lemma}
Let $X$ and $Y$ be smooth irreducible affine varieties of the same dimension
\[
F:X\rightarrow Y
\]
be regular, biholomorphic and birational then $F$ is a regular isomorphism of varieties.
\end{lemma}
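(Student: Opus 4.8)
The plan is to produce an honest regular inverse for $F$. Since $X$ and $Y$ are smooth varieties over $\mathbb{C}$ they carry the structure of complex manifolds, and the hypothesis that $F$ is biholomorphic means precisely that $F$ is a bijection whose set-theoretic inverse $G=F^{-1}:Y\to X$ is holomorphic. It therefore suffices to prove that this $G$ is in fact a morphism of varieties; bijectivity then automatically makes $F$ a regular isomorphism. So the whole problem is to promote the holomorphic map $G$ to a regular one.

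First I would reduce to a statement about scalar functions. Realize $X$ as a Zariski closed subvariety of some affine space $\mathbb{A}^{N}$ with coordinate functions $x_{1},\ldots,x_{N}$, and set $g_{i}=x_{i}\circ G$. Each $g_{i}:Y\to\mathbb{C}$ is holomorphic, and if every $g_{i}$ is a regular function on $Y$ then $G=(g_{1},\ldots,g_{N})$ is a regular map $Y\to\mathbb{A}^{N}$ whose image lies in the closed set $X$, hence a morphism $Y\to X$. Thus the lemma comes down to showing that each $g_{i}$ is everywhere regular. Here I would bring in birationality: because $F$ is birational it induces an isomorphism of function fields $F^{\ast}:\mathbb{C}(Y)\xrightarrow{\sim}\mathbb{C}(X)$, and there is a dense Zariski open $U\subset Y$ over which $G$ is already a morphism. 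Consequently each $g_{i}$ is a \emph{rational} function on $Y$ that is regular on $U$, and the only thing left to prove is that it is regular on all of $Y$.

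The decisive point, which I would isolate as the heart of the argument, is the following general fact: on a normal variety (and $Y$, being smooth, is normal) a rational function that is holomorphic at every point is everywhere regular. Granting this and applying it to each $g_{i}$, we conclude that $G$ is regular and the lemma follows. To prove the fact I would analyze the polar locus of a rational function $\phi$: by normality the set of points at which $\phi$ fails to be regular is either empty or pure of codimension one. If it were nonempty, I would pick a smooth point $y$ of a codimension-one component $D$; in analytic local coordinates at $y$ the function $\phi$ acquires a genuine pole along $D$ and is therefore unbounded near $y$, contradicting the assumed finiteness (holomorphicity) of $\phi$ at $y$. Hence the polar locus is empty and $\phi$ is regular.

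The main obstacle is exactly this last fact, and it is where the analytic hypothesis is spent: the delicate step is the passage between the algebraic description of the pole, via the normal (indeed locally factorial) local ring at the generic point of $D$, and its analytic manifestation as unboundedness. Once that comparison is in hand the argument is immediate. I note that one can instead sidestep the analytic reasoning altogether and invoke Zariski's Main Theorem: a birational, quasi-finite morphism onto a normal variety is an open immersion, and a surjective open immersion is an isomorphism. In our situation bijectivity of $F$ supplies both quasi-finiteness and surjectivity, while smoothness of $Y$ supplies normality, so this route gives the conclusion directly; I would present the analytic proof as the primary one since it uses the biholomorphicity that is actually furnished by the application in Theorem \ref{bundle}.
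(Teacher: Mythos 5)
Your proposal is correct and follows essentially the same route as the paper: both reduce to the coordinate functions of $F^{-1}$, which are simultaneously rational and holomorphic, and then invoke the fact that on a smooth variety a rational function holomorphic at a point is regular there --- the paper simply cites this as the Lemma on p.~177 of \cite{Shaf2} (proved via unique factorization in the local ring), whereas you prove it directly via normality and the codimension-one polar locus. Your closing remark that Zariski's Main Theorem gives the conclusion without any analytic input (birational $+$ bijective onto a normal target) is also sound, but it is an aside rather than the paper's argument.
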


\begin{proof}
Let $F^{-1}:Y\rightarrow X$ then $F^{-1}$ is a rational map that is also
holomorphic. We may assume that $X\subset\mathbb{C}^{n}$ as a
$\mathrm{Zariski}$ closed subset. Then $F^{-1}=(\phi_{1},...,\phi_{n})$ with
$\phi_{j},j=1,...,n$ both rational and holomorphic on $Y$. This implies that
if $p\in Y$ then the germ at $p$ of each $\phi_{j}$ is in $\mathcal{O}_{X,p}$
(see the Lemma on p.177 in \cite{Shaf2} which follows from the fact that since
$X$ is smooth $\mathcal{O}_{X,p}$ is a unique factorization domain). Thus each
$\phi_{j}$ is regular and so $F^{-1}$ is regular.
\end{proof}

\subsection{The multiplicity formula}

We consider the representation of $H$ on the harmonics $\mathcal{H}$ (see
Corollary \ref{Harmonics}). Our generalization of the Kostant-Rallis
decomposition of the harmonics is

\begin{theorem}
If $U$ is an irreducible regular $H$--module then%
\[
\dim\mathrm{Hom}_{H}(U,\mathcal{H})=\dim\mathrm{Hom}_{M}(U,\mathcal{O}%
(\mathfrak{n})).
\]

\end{theorem}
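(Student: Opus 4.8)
The plan is to compute both sides of the multiplicity identity as dimensions of spaces of sections of an appropriate vector bundle, using the geometric description of the generic fiber $X_h$ established in Theorem~\ref{bundle}. The key structural input is the freeness result (Corollary~\ref{Harmonics}): as an $H$-module, $\mathcal{H}$ is isomorphic to $\mathcal{O}(V)/(\mathcal{O}(V)\mathcal{O}_+(V)^H)$, which is the coordinate ring of the scheme-theoretic null cone $X_0$. The strategy is to relate the $H$-module structure of $\mathcal{O}(X_0)$ to the $H$-module structure of $\mathcal{O}(X_h)$ for $h$ generic, and then to unwind the latter using the bundle isomorphism $H\times_M\mathfrak{n}\cong X_h$.

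\medskip

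First I would argue that the $H$-module structure of $\mathcal{H}$ is \emph{independent of the fiber}, i.e. that $\mathrm{Hom}_H(U,\mathcal{O}(X_0))=\mathrm{Hom}_H(U,\mathcal{O}(X_h))$ for $h\in\mathfrak{a}''$. The freeness theorem gives a graded $H$-isomorphism $\mathcal{O}(V)\cong\mathcal{O}(V)^H\otimes\mathcal{H}$, so $\mathcal{H}$ carries the graded $H$-multiplicities of $\mathcal{O}(V)$ divided out by the (trivial) action on invariants. Since the $f_1,\dots,f_r$ form a regular sequence and $\mathcal{O}(V)$ is free over $\mathcal{O}(V)^H=\mathbb{C}[f_1,\dots,f_r]$, specializing the $f_j$ to the values $f_j(h)$ produces, for each $h$, a fiber $\mathcal{O}(X_h)$ that is $H$-isomorphic to $\mathcal{H}$ (the associated graded of the filtration by degree recovers $\mathcal{H}$, and as an $H$-module a filtered module and its associated graded coincide). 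Thus it suffices to prove
\[
\dim\mathrm{Hom}_H(U,\mathcal{O}(X_h))=\dim\mathrm{Hom}_M(U,\mathcal{O}(\mathfrak{n}))
\]
for a single generic $h\in\mathfrak{a}''$.

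\medskip

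Next I would exploit Theorem~\ref{bundle}, which gives an $H$-equivariant isomorphism of varieties $X_h\cong H\times_M\mathfrak{n}$, where $M=C_H(\mathfrak{a})$. Pulling back functions, $\mathcal{O}(X_h)\cong\mathcal{O}(H\times_M\mathfrak{n})$ as $H$-modules. The coordinate ring of the associated bundle $H\times_M\mathfrak{n}$ is the ring of $M$-invariants
\[
\mathcal{O}(H\times_M\mathfrak{n})=\bigl(\mathcal{O}(H)\otimes\mathcal{O}(\mathfrak{n})\bigr)^M,
\]
with $H$ acting by left translation on the $\mathcal{O}(H)$ factor and $M$ acting by right translation on $\mathcal{O}(H)$ combined with its given action on $\mathcal{O}(\mathfrak{n})$. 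Now I would apply Frobenius reciprocity in its algebraic (Peter--Weyl) form: for a reductive group $H$ with reductive subgroup $M$ and an $M$-module $E$, one has $\bigl(\mathcal{O}(H)\otimes E\bigr)^M\cong\mathrm{Ind}_M^H(E)$, whose $U$-isotypic multiplicity is $\dim\mathrm{Hom}_M(U|_M,E)$. Taking $E=\mathcal{O}(\mathfrak{n})$ yields
\[
\dim\mathrm{Hom}_H(U,\mathcal{O}(X_h))=\dim\mathrm{Hom}_H\bigl(U,(\mathcal{O}(H)\otimes\mathcal{O}(\mathfrak{n}))^M\bigr)=\dim\mathrm{Hom}_M(U,\mathcal{O}(\mathfrak{n})),
\]
which is exactly the desired formula.

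\medskip

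\textbf{The main obstacle} I anticipate is the first step: justifying that the generic fiber $\mathcal{O}(X_h)$ is genuinely $H$-isomorphic to $\mathcal{H}$, and not merely equal to it in Euler-characteristic or leading-term sense. This requires that the $H$-module $\mathcal{O}(V)$ be flat (equivalently free) over $\mathcal{O}(V)^H$ so that the fiber dimensions of each $U$-isotypic component are constant in $h$; here the freeness Proposition is the essential ingredient, together with the fact that for $h\in\mathfrak{a}''$ the ideal $\mathcal{I}_h$ is radical and $X_h$ is smooth (the Theorem on radical ideals and the smoothness Lemma), so that $\mathcal{O}(X_h)$ is a reduced ring computing honest sections. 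The degeneration $h\to 0$ is what links the smooth generic fiber back to the (possibly non-reduced) null cone, but the $H$-module structure is preserved throughout because the grading deformation is $H$-equivariant. Once constancy of isotypic multiplicities is in hand, the bundle computation and Frobenius reciprocity are routine.
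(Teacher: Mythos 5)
Your proposal is correct and follows essentially the same route as the paper: you identify $\mathcal{H}$ with $\mathcal{O}(X_h)$ for generic $h\in\mathfrak{a}''$ using the freeness of $\mathcal{O}(V)$ over $\mathcal{O}(V)^H$, the radicality of $\mathcal{I}_h$, and the degree-filtration/associated-graded comparison (this is exactly the paper's Lemma, which quotes Lemma 12.4.9 of \cite{GoodWall}), and then conclude via the bundle isomorphism $X_h\cong H\times_M\mathfrak{n}$ of Theorem \ref{bundle} together with $\mathcal{O}(H\times_M\mathfrak{n})=\left(\mathcal{O}(H)\otimes\mathcal{O}(\mathfrak{n})\right)^M\cong\mathrm{Ind}_M^H(\mathcal{O}(\mathfrak{n}))$ and Frobenius reciprocity. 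The only cosmetic difference is that the paper first decomposes $\mathcal{O}(\mathfrak{n})$ into its graded pieces $\mathcal{O}^j(\mathfrak{n})$ so that the algebraic induction is applied to finite-dimensional $M$-modules, a technicality your argument absorbs into the same reciprocity step.
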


We will call the Vinberg pair tame if $\mathfrak{n}=\{0\}$. In particular, if
$\theta^{2}=1$ or the pair is regular ($T_{\mathfrak{a}}$ is a maximal torus
in $G$) then the pair is tame. Thus the theorem in this context is an exact
generalization to the multiplicity theorem of Kostant-Rallis.

\begin{corollary}
If the pair is tame and if $U$ is an irreducible regular $H$--module then%
\[
\dim\mathrm{Hom}_{H}(U,\mathcal{H})=\dim\mathrm{Hom}_{M}(U,\mathbb{C}).
\]

\end{corollary}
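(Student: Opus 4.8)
The plan is to reduce the Corollary to the multiplicity Theorem stated just above it and to prove that Theorem by identifying the harmonics $\mathcal{H}$, as an $H$--module, with the coordinate ring of a generic fibre $X_h$; the bundle description $X_h\cong H\times_M\mathfrak{n}$ of Theorem \ref{bundle} then turns the computation into an instance of Frobenius reciprocity. Granting the Theorem, the Corollary is immediate: if the pair is tame then $\mathfrak{n}=\{0\}$, so $\mathcal{O}(\mathfrak{n})=\mathbb{C}$ with trivial $M$--action, and hence $\dim\mathrm{Hom}_M(U,\mathcal{O}(\mathfrak{n}))=\dim\mathrm{Hom}_M(U,\mathbb{C})$. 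So essentially all the work sits in the Theorem.

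First I would produce the $H$--module isomorphism $\mathcal{H}\cong\mathcal{O}(X_h)$ for $h\in\mathfrak{a}''$. The freeness statement (Corollary \ref{Harmonics}) gives an $H$--module decomposition $\mathcal{O}(V)\cong\mathcal{O}(V)^H\otimes\mathcal{H}$ in which $H$ acts trivially on the invariant factor and $\mathcal{O}(V)^H$ acts only on it. Let $q=p(h)\in V/\!/H$ and let $\mathfrak{m}_q\subset\mathcal{O}(V)^H$ be its maximal ideal. Tensoring the decomposition down along the evaluation $\mathcal{O}(V)^H\to\mathbb{C}_q$ collapses the invariant factor to a scalar, so that
\[
\mathcal{O}(V)/\mathfrak{m}_q\mathcal{O}(V)\cong\mathcal{H}\quad\text{as }H\text{--modules.}
\]
For $h\in\mathfrak{a}'\cap\mathfrak{a}^o$ the radicality theorem ensures that the ideal $\mathfrak{m}_q\mathcal{O}(V)$ cutting out the scheme--theoretic fibre is radical, hence equals the vanishing ideal $\mathcal{I}_h$ of $X_h$; therefore the quotient above is exactly $\mathcal{O}(X_h)$, and we obtain $\mathcal{O}(X_h)\cong\mathcal{H}$.

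Next I would compute $\mathrm{Hom}_H(U,\mathcal{O}(X_h))$ using $X_h\cong H\times_M\mathfrak{n}$. The coordinate ring of the associated bundle is
\[
\mathcal{O}(H\times_M\mathfrak{n})=\bigl(\mathcal{O}(H)\otimes\mathcal{O}(\mathfrak{n})\bigr)^M,
\]
with $M$ acting by right translation on $\mathcal{O}(H)$ and linearly on $\mathcal{O}(\mathfrak{n})$, and with the $H$--structure coming from left translation. Decomposing $\mathcal{O}(H)$ by the algebraic Peter--Weyl theorem and taking the right $M$--invariants identifies this with the induced module $\mathrm{Ind}_M^H\mathcal{O}(\mathfrak{n})$, so Frobenius reciprocity yields
\[
\dim\mathrm{Hom}_H(U,\mathcal{O}(X_h))=\dim\mathrm{Hom}_M(U,\mathcal{O}(\mathfrak{n})).
\]
Combining this with $\mathcal{H}\cong\mathcal{O}(X_h)$ gives the Theorem, and the tame specialization $\mathfrak{n}=\{0\}$ gives the Corollary.

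The hard part will be the reduced/flat bookkeeping in the first step: one must be sure that restricting to a single generic point of $V/\!/H$ really reads off $\mathcal{H}$ with its full $H$--module structure. This needs the tensor decomposition of $\mathcal{O}(V)$ over $\mathcal{O}(V)^H$ (supplied by Corollary \ref{Harmonics}) together with reducedness of the generic fibre (supplied by the radicality theorem, valid precisely on $\mathfrak{a}'\cap\mathfrak{a}^o$); both load--bearing inputs are already in hand, so the remaining identifications are formal. I would also double--check the conventions in the Peter--Weyl/Frobenius step---which factor carries the $H$--action and where the duals sit---since a convention slip there would interchange $U$ and $U^{\ast}$ without affecting the final dimension count.
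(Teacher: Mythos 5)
Your proposal is correct and follows essentially the paper's own route: the corollary is deduced from the multiplicity theorem by observing that tameness gives $\mathfrak{n}=\{0\}$, hence $\mathcal{O}(\mathfrak{n})=\mathbb{C}$, and the theorem itself is proved exactly as in the paper by identifying $\mathcal{H}\cong\mathcal{O}(X_{h})$ for $h\in\mathfrak{a}^{\prime\prime}$ (freeness from Corollary \ref{Harmonics} plus radicality of $\mathcal{I}_{h}$) and then combining Theorem \ref{bundle} with $\mathcal{O}(H\times_{M}\mathfrak{n})\cong\mathrm{Ind}_{M}^{H}\mathcal{O}(\mathfrak{n})$ and Frobenius reciprocity. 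The only (harmless) variation is that you establish $\mathcal{O}(V)/\mathcal{I}_{h}\cong\mathcal{H}$ by direct base change of the tensor decomposition along evaluation at $p(h)$, where the paper instead passes through the associated graded module via Lemma 12.4.9 of \cite{GoodWall}; both hinge on the same freeness statement.
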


Note that the corollary follows directly from the above theorem and Theorem
12.4.13 in \cite{GoodWall} .

We will devote the rest of this subsection to the prove of this theorem. First
we need

\begin{lemma}
The $H$--module $\mathcal{H}$ is equivalent to $\mathcal{O}(V)/\mathcal{I}%
_{h}$ for any $h\in\mathfrak{a}$.
\end{lemma}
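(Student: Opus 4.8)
The plan is to realize $\mathcal{O}(V)/\mathcal{I}_h$ as the reduction of the \emph{free} $\mathcal{O}(V)^H$--module $\mathcal{O}(V)$ modulo the maximal ideal of the point $p(h)\in V//H$, and to observe that this reduction carries the $H$--module structure of $\mathcal{H}$ regardless of $h$. In effect this is the standard ``constancy in a flat (here free) family'' principle, with the freeness supplied by Corollary \ref{Harmonics}.

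First I would pin down the ideal. Let $\mathfrak{m}_h=\{f\in\mathcal{O}(V)^H\mid f(h)=0\}$ be the maximal ideal of $\mathcal{O}(V)^H$ corresponding to $p(h)$. Since $\mathcal{I}_h$ is generated by the elements $f-f(h)$ with $f\in\mathcal{O}(V)^H$, and every element of $\mathfrak{m}_h$ is of exactly this form, the generating set coincides with $\mathfrak{m}_h$, so
\[
\mathcal{I}_h=\mathcal{O}(V)\,\mathfrak{m}_h,\qquad
\mathcal{O}(V)/\mathcal{I}_h=\mathcal{O}(V)\otimes_{\mathcal{O}(V)^H}\left(\mathcal{O}(V)^H/\mathfrak{m}_h\right).
\]
Write $\mathbb{C}_h=\mathcal{O}(V)^H/\mathfrak{m}_h\cong\mathbb{C}$ for the residue field at $p(h)$.

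The decisive input is Corollary \ref{Harmonics}: the multiplication map $\mathcal{O}(V)^H\otimes\mathcal{H}\to\mathcal{O}(V)$ is a linear bijection, which says precisely that $\mathcal{O}(V)$ is a free $\mathcal{O}(V)^H$--module on any linear basis of $\mathcal{H}$. This map is moreover $H$--equivariant, since $H$ acts trivially on $\mathcal{O}(V)^H$: for $g\in H$, $f\in\mathcal{O}(V)^H$, $\phi\in\mathcal{H}$ we have $g\cdot(f\phi)=(g\cdot f)(g\cdot\phi)=f\,(g\cdot\phi)$. Base changing the isomorphism $\mathcal{O}(V)\cong\mathcal{O}(V)^H\otimes\mathcal{H}$ along $\mathcal{O}(V)^H\to\mathbb{C}_h$ then yields an $H$--module isomorphism
\[
\mathcal{O}(V)/\mathcal{I}_h\;\cong\;\mathbb{C}_h\otimes\mathcal{H}\;\cong\;\mathcal{H}.
\]
The right-hand side is independent of $h$, so the statement holds for every $h\in\mathfrak{a}$; for $h=0$ it specializes back to Corollary \ref{Harmonics}.

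I do not expect a genuine obstacle here, as the argument is purely module-theoretic once freeness is granted. The one point that deserves care is $H$--equivariance of the base change, and in particular that the residue field $\mathbb{C}_h$ carries the \emph{trivial} $H$--action; this is immediate because $\mathfrak{m}_h$, being an ideal of the invariant ring $\mathcal{O}(V)^H$, is automatically $H$--stable. With that noted, the $H$--equivariance of the multiplication isomorphism propagates formally through the tensor product, and crucially no geometric input about $X_h$ (such as the radicality of $\mathcal{I}_h$ established in the preceding theorem for generic $h$) is required: the $H$--module type of the fiber coordinate ring is constant in the family even when the ideal fails to be reduced.
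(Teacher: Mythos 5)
Your argument is correct, and it takes a genuinely different route from the paper's. The paper proves the lemma by degeneration: it puts the degree filtration on $\mathcal{O}(V)/\mathcal{I}_{h}$ and cites Lemma 12.4.9 of \cite{GoodWall} to get $Gr(\mathcal{O}(V)/\mathcal{I}_{h})\cong\mathcal{O}(V)/\mathcal{I}_{0}=\mathcal{O}(V)/\mathcal{O}(V)\mathcal{O}_{+}(V)^{H}\cong\mathcal{H}$ as $H$--modules, which implicitly also uses that a locally finite filtered module over the reductive group $H$ is $H$--isomorphic to its associated graded. You instead use only the freeness in Corollary \ref{Harmonics} directly: identifying $\mathcal{I}_{h}=\mathcal{O}(V)\mathfrak{m}_{h}$ (correct, since every $f-f(h)$ lies in $\mathfrak{m}_{h}$ and every element of $\mathfrak{m}_{h}$ has that form) and base changing the $H$--equivariant $\mathcal{O}(V)^{H}$--module isomorphism $\mathcal{O}(V)^{H}\otimes\mathcal{H}\rightarrow\mathcal{O}(V)$ along evaluation at $p(h)$. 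Unwound, your argument says the canonical $H$--map $\mathcal{H}\hookrightarrow\mathcal{O}(V)\rightarrow\mathcal{O}(V)/\mathcal{I}_{h}$ is bijective: surjective because $u\phi\equiv u(h)\phi\bmod\mathcal{I}_{h}$ for $u\in\mathcal{O}(V)^{H}$, injective because the coefficients in a free-basis expansion of an element of $\mathcal{I}_{h}$ lie in $\mathfrak{m}_{h}$ and so vanish at $h$ if they are constants. Your observation that the residue field carries the trivial $H$--action, and hence that equivariance propagates through the tensor product, is exactly the point that needs stating, and you state it. What each approach buys: yours is self-contained (no appeal to \cite{GoodWall} beyond Corollary \ref{Harmonics}), yields the isomorphism canonically and uniformly in $h$, and makes transparent that radicality of $\mathcal{I}_{h}$ -- established only for generic $h$ in the preceding theorem -- is irrelevant here; the paper's route keeps the degree filtration explicitly in view, which matches the way $Gr(\mathcal{O}(X_{h}))$ is invoked immediately afterwards, and lets the author reuse a lemma already proved in \cite{GoodWall} verbatim. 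Both proofs ultimately rest on the same freeness statement, so neither is more general, but yours is the more economical of the two.
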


\begin{proof}
We put the natural filtration by degree on $\mathcal{O}(V)/\mathcal{I}_{h}$.
Then Lemma 12.4.9 in \cite{GoodWall} immediately implies that $Gr(\mathcal{O}%
(V)/\mathcal{I}_{h})$ is isomorphic with $\mathcal{O}(V)/\mathcal{I}%
_{0}=\mathcal{O}(V)/\mathcal{O}(V)\mathcal{O}_{+}(V)^{H}$. which we have seen
is isomorphic with $\mathcal{H}$ as an $H$ module.
\end{proof}

We recall that if $h\in\mathfrak{a}^{\prime\prime}\mathcal{\ }$then
$\mathcal{I}_{h}$ is prime. So the lemma above implies that $\mathcal{H}$ is
isomorphic with $Gr(\mathcal{O}(X_{h}))$ as a representation of $H$ for any
$h\in\mathfrak{a}^{\prime\prime}$ (see the previous section). The theorem now
follows from

\begin{proposition}
Let $h\in\mathfrak{a}^{\prime\prime}$. If $U$ is an irreducible regular $H
$--module then%
\[
\dim\mathrm{Hom}_{H}(U,\mathcal{O}(X_{h}))=\dim\mathrm{Hom}_{M}(U,\mathcal{O}%
(\mathfrak{n})).
\]

\end{proposition}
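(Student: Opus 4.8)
The plan is to exploit the isomorphism $H\times_{M}\mathfrak{n}\xrightarrow{\sim}X_{h}$ from Theorem \ref{bundle} to reduce the computation of $H$-multiplicities in $\mathcal{O}(X_{h})$ to an induced-representation (Frobenius-reciprocity) calculation. Since $h\in\mathfrak{a}^{\prime\prime}\subset\mathfrak{a}^{\prime}$, we have $X_{h}=H(h+\mathfrak{n})\cong H\times_{M}\mathfrak{n}$ as an $H$-variety, where $M=C_H(\mathfrak a)$ acts on $\mathfrak n$ and $H$ acts on the left factor. Pulling back regular functions, $\mathcal{O}(X_h)\cong\mathcal{O}(H\times_M\mathfrak n)$ as $H$-modules. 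The key structural fact is that regular functions on an associated bundle $H\times_M\mathfrak n$ are exactly the $M$-equivariant regular functions from $H$ into $\mathcal{O}(\mathfrak n)$; more precisely, as an $H$-module,
\[
\mathcal{O}(H\times_M\mathfrak n)\;\cong\;\bigl(\mathcal{O}(H)\otimes\mathcal{O}(\mathfrak n)\bigr)^{M},
\]
the $M$-invariants for the action $m\cdot(f\otimes\varphi)=(R_m f)\otimes(m\varphi)$, with $H$ acting only through left translation on the $\mathcal{O}(H)$ factor.

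With this identification in hand, the second step is the Frobenius-reciprocity computation. For an irreducible regular $H$-module $U$, by the algebraic Peter-Weyl theorem $\mathcal{O}(H)\cong\bigoplus_{\pi}\pi^*\otimes\pi$ as an $H\times H$-module (left and right regular actions), so that as a left $H$-module, $\mathrm{Hom}_H(U,\mathcal{O}(H)\otimes W)\cong(U^*\otimes W)$ with the residual right-$H$/$M$-structure. Taking $W=\mathcal{O}(\mathfrak n)$ and then extracting $M$-invariants, one obtains the chain
\[
\mathrm{Hom}_H\bigl(U,\,(\mathcal{O}(H)\otimes\mathcal{O}(\mathfrak n))^M\bigr)
\;\cong\;\mathrm{Hom}_M\bigl(U,\,\mathcal{O}(\mathfrak n)\bigr),
\]
which is precisely the asserted equality of dimensions. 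Concretely this is the standard statement that $\mathcal{O}(H\times_M\mathfrak n)$ is the induced module $\mathrm{Ind}_M^H\mathcal{O}(\mathfrak n)$ in the algebraic (regular) category, and that $H$-multiplicities in an induced module are computed by Frobenius reciprocity.

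The main obstacle, and the step requiring the most care, is the passage through infinite-dimensional modules: $\mathcal{O}(\mathfrak n)$ and $\mathcal{O}(X_h)$ are not finite dimensional, so the Peter-Weyl decomposition and the interchange of $\mathrm{Hom}_H$ with taking $M$-invariants must be justified in a setting where everything decomposes as a direct sum (not a completed sum) of finite-dimensional $H$-modules. This is legitimate because $\mathcal{O}(X_h)$ is a \emph{locally finite, algebraic} (rational) $H$-module, so it splits as a direct sum of its isotypic components and each isotypic component involves the finite-dimensional $U$ in a finite multiplicity; one can therefore argue one isotypic component at a time and avoid any topological completion. I would verify that the $M$-action on $\mathfrak n$, and hence on $\mathcal{O}(\mathfrak n)$, is rational (it is, being the restriction of the $C_H(\mathfrak a)$-action on the Vinberg pair $(C_H(\mathfrak a)_{|\mathfrak n},\mathfrak n)$), so that $\mathrm{Hom}_M(U,\mathcal{O}(\mathfrak n))$ is the honest, finite-dimensional space of $M$-maps. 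Granting the bundle isomorphism of Theorem \ref{bundle} and the rationality of all actions, the remainder is the formal Frobenius-reciprocity identity for regular induction, and the proof concludes.
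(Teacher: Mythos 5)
Your proposal is correct and follows essentially the same route as the paper: Theorem \ref{bundle} gives $\mathcal{O}(X_{h})\cong\mathcal{O}(H\times_{M}\mathfrak{n})=\bigl(\mathcal{O}(H)\otimes\mathcal{O}(\mathfrak{n})\bigr)^{M}\cong\mathrm{Ind}_{M}^{H}(\mathcal{O}(\mathfrak{n}))$, and Frobenius reciprocity finishes. The only (harmless) difference is that where you invoke local finiteness and argue isotypic component by isotypic component, the paper sidesteps the infinite-dimensionality by using the $M$-stable grading $\mathcal{O}(H\times\mathfrak{n})^{M}=\bigoplus_{j\geq0}\bigl(\mathcal{O}(H)\otimes\mathcal{O}^{j}(\mathfrak{n})\bigr)^{M}$ and inducing each finite-dimensional piece $\mathcal{O}^{j}(\mathfrak{n})$ separately.
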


\begin{proof}
In light of Theorem \ref{bundle} may replace $\mathcal{O}(X_{h})$ for
$h\in\mathfrak{a}^{\prime\prime}$ with $\mathcal{O}(H\times_{M}\mathfrak{n})$.
By our definition $H\times_{M}\mathfrak{n}=(H\times\mathfrak{n})/M$ using the
right action above, $\mathcal{O}((H\times\mathfrak{n)}/M)=\mathcal{O}%
(H\times\mathfrak{n})^{M}$. Here $M$ acts on $\mathcal{O}(H\times
\mathfrak{n})$ by $mf(g,x)=f(gm,m^{-1}x)$ for $g\in H,x\in\mathfrak{n}$ and
$m\in M$. Now%
\[
\mathcal{O}(H\times\mathfrak{n})\cong\mathcal{O}(H)\otimes\mathcal{O}%
(\mathfrak{n})
\]
(under the map $\left(  u\otimes v\right)  (g,x)=u(g)v(x)$, $u\in
\mathcal{O}(H),v\in\mathcal{O}(\mathfrak{n})$) and the action of $M$ is just
the tensor product action relative to the right action on $H$ and the left
action on $\mathfrak{n}$. Thus $M$ leaves invariant the grade on
$\mathcal{O}(\mathfrak{n})$. So
\[
\mathcal{O}(H\times\mathfrak{n})^{M}=\bigoplus_{j\geq0}\left(  \mathcal{O}%
(H)\otimes\mathcal{O}^{j}(\mathfrak{n})\right)  ^{M}.
\]
If $f\in\left(  \mathcal{O}(H)\otimes\mathcal{O}^{j}(\mathfrak{n})\right)
^{M}$ (as a subspace of $\left(  \mathcal{O}(H)\otimes\mathcal{O}%
(\mathfrak{n})\right)  ^{M}$) then we define $\mathbf{f}(g)(x)=f(g,x)$ for
$g\in H$ and $x\in\mathfrak{n}.$Then
\[
\mathbf{f}:H\rightarrow\mathcal{O}^{j}(\mathfrak{n})
\]
is regular and $\mathbf{f}(gm)=m^{-1}\mathbf{f}(g)$. That is, as an
$H$--module,
\[
\left(  \mathcal{O}(H)\otimes\mathcal{O}^{j}(\mathfrak{n})\right)  ^{M}%
\cong\mathrm{Ind}_{M}^{H}(\mathcal{O}^{j}(\mathfrak{n})).
\]
The theorem now follows from Frobenius reciprocity. (See e.g. \cite{GoodWall}
section 12.1.2 for the undefined terms and the reciprocity.)
\end{proof}

\section{Examples for E$_{6}$ and E$_{8}$}

The full details of this discussion can be found in \cite{Wallach} also most
of the preliminaries to the actual multiplicity formula can be found in
\cite{Elesh-Vin}. However the interested reader can take the unproved
assertions in this paper to be exercises.

\subsection{An E$_{6}$ example}

We take $\mathfrak{g}$ to be simple of type $E_{6}$. Fix a Cartan subalgebra
$\mathfrak{h}$ and a system of positive roots. The simple roots are
$\alpha_{1},...,\alpha_{6}$ and the extended Dynkin diagram in the Bourbaki
ordering is
\[%
\begin{array}
[c]{ccccccccc}
&  &  &  & \circ & -\beta &  &  & \\
&  &  &  & | &  &  &  & \\
&  &  &  & \circ & \alpha_{2} &  &  & \\
&  &  &  & | &  &  &  & \\
\circ & \frac{\qquad}{{}} & \circ & \frac{\qquad}{{}} & \circ & \frac{\qquad
}{{}} & \circ & \frac{\qquad}{{}} & \circ\\
\alpha_{1} &  & \alpha_{3} &  & \alpha_{4} &  & \alpha_{5} &  & \alpha_{6}%
\end{array}
\]
Let $H_{1},...,H_{6}$ be the dual basis of $\mathfrak{h}$ to the simple roots
(i.e. $\alpha_{i}(H_{j})=\delta_{ij}$). Then the automorphism $\mathfrak{g}$
given by $\theta=\exp(\frac{2\pi i}{3}\mathrm{ad}H_{4})$ is of order $3$ since
the coefficient of $\alpha_{4}$ in the expansion of the highest root, $\beta$,
is $3$. In this case we see that we have $H$ is locally isomorphic with
$SL(3,\mathbb{C})\times SL(3,\mathbb{C})\times SL(3,\mathbb{C})$ (since its
Dynkin diagram is gotten by deleting the node labeled $\alpha_{4}$) and
$\mathfrak{g}$ is the direct sum of $\mathrm{Lie}(H)$ and a direct sum of
$H$--modules
\[
\mathbb{C}^{3}\otimes\mathbb{C}^{3}\otimes\mathbb{C}^{3}\oplus\left(
\mathbb{C}^{3}\otimes\mathbb{C}^{3}\otimes\mathbb{C}^{3}\right)  ^{\ast}%
\]
The corresponding Vinberg pair is
\[
(SL(3,\mathbb{C})\otimes SL(3,\mathbb{C})\otimes SL(3,\mathbb{C}%
),\mathbb{C}^{3}\otimes\mathbb{C}^{3}\otimes\mathbb{C}^{3}).
\]
(Here the indicated group is the set of elements $g_{1}\otimes g_{2}\otimes
g_{3}$ with $g_{i}\in SL(3,\mathbb{C})$.) Let $e_{1},e_{2},e_{3}$ denote the
standard basis of $\mathbb{C}^{3}$. One can show that%
\[
v_{1}=e_{1}\otimes e_{1}\otimes e_{1}+e_{2}\otimes e_{2}\otimes e_{2}%
+e_{3}\otimes e_{3}\otimes e_{3},
\]%
\[
v_{2}=e_{1}\otimes e_{2}\otimes e_{3}+e_{3}\otimes e_{1}\otimes e_{2}%
+e_{2}\otimes e_{3}\otimes e_{1},
\]%
\[
v_{3}=e_{3}\otimes e_{2}\otimes e_{1}+e_{1}\otimes e_{3}\otimes e_{2}%
+e_{2}\otimes e_{1}\otimes e_{3}%
\]
then $v_{1},v_{2},v_{3}$ is a basis of a Cartan subspace, $\mathfrak{a}$, of
$V$. To see this we note that if $T$ is the product of the diagonal Cartan
subgroups then the weights of $V$ are all of multiplicity one. This implies
that $\wedge^{2}V$ is multiplicity free and since $V^{\ast}$occurs in
$\wedge^{2}V$ we see that the bracket of $\mathfrak{g}$ restricted to $V$ is
up to scalar multiple given by
\[
\lbrack x_{1}\otimes x_{2}\otimes x_{3},y_{1}\otimes y_{2}\otimes y_{3}%
]=x_{1}\wedge y_{1}\otimes x_{2}\wedge y_{2}\otimes x_{3}\wedge y_{3}%
\]
with $\wedge^{2}\mathbb{C}^{3}$ identified with $\left(  \mathbb{C}%
^{3}\right)  ^{\ast}$. Observe that this implies that $[v_{i},v_{j}]=0$ all
$i,j$. Also a direct calculation shows that
\[
\left\langle E_{rs}v_{i},v_{j}\right\rangle =0
\]
and
\[
\left\langle \left(  E_{rr}-E_{ss}\right)  v_{i},v_{j}\right\rangle =0
\]
if $r\neq s$ for all $i,j$. Thus the span of the $v_{i}$ is abelian and
consists of critical. hence semi--simple, elements. Finally since
$\varphi(3)=2$ and rank $E_{6}$ is $6$ we see that since a Cartan subspace is
at most of dimension $\frac{\mathrm{rank}(\mathfrak{g)}}{\varphi(m)}=3$ this
span, indeed a Cartan subspace. We also note that

1. $C_{\mathfrak{g}}(\mathfrak{a})\cap V=\mathfrak{a}$. \smallskip

2. The group $M$ is the set of triples of matrices $M_{1}\cup M_{2}\cup
M_{3}\cup M_{4}$ with $\alpha,\beta,\delta,\mu$ third roots of $1.$
\[
M_{1}=\left\{  \left(  \alpha\left[
\begin{array}
[c]{ccc}%
1 & 0 & 0\\
0 & 1 & 0\\
0 & 0 & 1
\end{array}
\right]  ,\beta\left[
\begin{array}
[c]{ccc}%
1 & 0 & 0\\
0 & 1 & 0\\
0 & 0 & 1
\end{array}
\right]  ,\frac{1}{\alpha\beta}\left[
\begin{array}
[c]{ccc}%
1 & 0 & 0\\
0 & 1 & 0\\
0 & 0 & 1
\end{array}
\right]  \right)  \right\}  ,
\]%
\[
M_{2}=\left\{  \left(  \alpha\left[
\begin{array}
[c]{ccc}%
\delta & 0 & 0\\
0 & \mu & 0\\
0 & 0 & \frac{1}{\delta\mu}%
\end{array}
\right]  ,\beta\left[
\begin{array}
[c]{ccc}%
\delta & 0 & 0\\
0 & \mu & 0\\
0 & 0 & \frac{1}{\delta\mu}%
\end{array}
\right]  ,\frac{1}{\alpha\beta}\left[
\begin{array}
[c]{ccc}%
\delta & 0 & 0\\
0 & \mu & 0\\
0 & 0 & \frac{1}{\delta\mu}%
\end{array}
\right]  \right)  |\delta\neq\mu\right\}  ,
\]%
\[
M_{3}=\left\{  \left(  \alpha\left[
\begin{array}
[c]{ccc}%
0 & \delta & 0\\
0 & 0 & \mu\\
\frac{1}{\delta\mu} & 0 & 0
\end{array}
\right]  ,\beta\left[
\begin{array}
[c]{ccc}%
0 & \delta & 0\\
0 & 0 & \mu\\
\frac{1}{\delta\mu} & 0 & 0
\end{array}
\right]  ,\frac{1}{\alpha\beta}\left[
\begin{array}
[c]{ccc}%
0 & \delta & 0\\
0 & 0 & \mu\\
\frac{1}{\delta\mu} & 0 & 0
\end{array}
\right]  \right)  \right\}  ,
\]%
\[
M_{4}=\left\{  \left(  \alpha\left[
\begin{array}
[c]{ccc}%
0 & 0 & \delta\\
\mu & 0 & 0\\
0 & \frac{1}{\delta\mu} & 0
\end{array}
\right]  ,\beta\left[
\begin{array}
[c]{ccc}%
0 & 0 & \delta\\
\mu & 0 & 0\\
0 & \frac{1}{\delta\mu} & 0
\end{array}
\right]  ,\frac{1}{\alpha\beta}\left[
\begin{array}
[c]{ccc}%
0 & 0 & \delta\\
\mu & 0 & 0\\
0 & \frac{1}{\delta\mu} & 0
\end{array}
\right]  \right)  \right\}  .
\]

3. The order of $M$ is $81$

4. Every element of $M_{i}$ for $i>1$ is conjugate to
\[
\left(  \left[
\begin{array}
[c]{ccc}%
\zeta^{2} & 0 & 0\\
0 & \zeta & 0\\
0 & 0 & 1
\end{array}
\right]  ,\left[
\begin{array}
[c]{ccc}%
\zeta^{2} & 0 & 0\\
0 & \zeta & 0\\
0 & 0 & 1
\end{array}
\right]  ,\left[
\begin{array}
[c]{ccc}%
\zeta^{2} & 0 & 0\\
0 & \zeta & 0\\
0 & 0 & 1
\end{array}
\right]  \right)
\]
in $H$ with $\zeta=e^{\frac{2\pi i}{3}}$.\smallskip

We parametrize the irreducible regular representations of $SL(3,\mathbb{C})$
by pairs of integers $m\geq n\geq0$ as the restrictions of the irreducible
representation of $GL(3,\mathbb{C})$ corresponding to $m\geq n\geq0$ (c.f.
\cite{GoodWall}Theorem 5.5.22). This parametrization is by the highest weight
$m\varepsilon_{1}+n\varepsilon_{2}$ restricted to the diagonal matrices of
trace $0$. We write the representation as $F^{m,n}$. Thus the irreducible
regular representations of $H$ are of the form $F^{m_{1},n_{1}}\otimes
F^{m_{2},n_{2}}\otimes F^{m_{3},n_{3}}$.\smallskip

We have

6. $F^{m_{1},n_{1}}\otimes F^{m_{2},n_{2}}\otimes F^{m_{3},n_{3}}$ has a fixed
vector for the group $M_{1}$ above if and only if $m_{1}+n_{1}\equiv
m_{2}+n_{2}\equiv n_{3}+m_{3}\operatorname{mod}3$.

\begin{proposition}
If the condition of Exercise 4 is not satisfied then $\mathrm{Hom}%
_{H}(F^{m_{1},n_{1}}\otimes F^{m_{2},n_{2}}\otimes F^{m_{3},n_{3}}%
,\mathcal{H})=\{0\}$. If it is satisfied then
\[
\dim F^{m_{1},n_{1}}\otimes F^{m_{2},n_{2}}\otimes F^{m_{3},n_{3}%
}\operatorname{mod}9\in\{0,1,8\}.
\]
Set $\varepsilon(F^{m_{1},n_{1}}\otimes F^{m_{2},n_{2}}\otimes F^{m_{3},n_{3}%
})=0,8,-8$ respectively if the congruence modulo $9$ is $0,1$ or $8$. Then
\[
\mathrm{Hom}_{H}(F^{m_{1},n_{1}}\otimes F^{m_{2},n_{2}}\otimes F^{m_{3},n_{3}%
},\mathcal{H})=
\]%
\[
\frac{\dim F^{m_{1},n_{1}}\otimes F^{m_{2},n_{2}}\otimes F^{m_{3},n_{3}%
}+\varepsilon(F^{m_{1},n_{1}}\otimes F^{m_{2},n_{2}}\otimes F^{m_{3},n_{3}}%
)}{9}
\]

\end{proposition}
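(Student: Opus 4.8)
The plan is to reduce the computation of $\dim\mathrm{Hom}_H(U,\mathcal H)$ for $U=F^{m_1,n_1}\otimes F^{m_2,n_2}\otimes F^{m_3,n_3}$ to a character computation on the finite group $M=C_H(\mathfrak a)$ and then to evaluate three Schur functions at the cube roots of unity. First I would observe that this pair is tame: the listed fact $C_{\mathfrak g}(\mathfrak a)\cap V=\mathfrak a$ says that in the structure theorem $C_{\mathfrak g}(\mathfrak a)\cap V=\mathfrak a\oplus\mathfrak n$ one has $\mathfrak n=\{0\}$. Hence the tame-case Corollary gives $\dim\mathrm{Hom}_H(U,\mathcal H)=\dim\mathrm{Hom}_M(U,\mathbb C)=\dim U^{M}$, the last equality because the trivial representation is self-dual and $M$ is finite. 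From here everything is the representation theory of the order-$81$ group $M$.

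For the vanishing statement, note that $M_1\subset M$ forces $U^{M}\subset U^{M_1}$, and the enumerated fact about $M_1$-fixed vectors says $U^{M_1}\ne\{0\}$ exactly when $m_1+n_1\equiv m_2+n_2\equiv m_3+n_3\pmod 3$. Thus if that congruence fails, $\dim U^{M}=0$, which is the first assertion. Now assume the congruence holds and apply the averaging formula $\dim U^M=\frac1{81}\sum_{g\in M}\chi_U(g)$, splitting $M=M_1\sqcup M_2\sqcup M_3\sqcup M_4$. On $M_1$ a scalar $\alpha I\in SL(3,\mathbb C)$ acts on $F^{m,n}$ by $\alpha^{m+n}$, so $\chi_U(\alpha I,\beta I,(\alpha\beta)^{-1}I)=\alpha^{m_1+n_1-(m_3+n_3)}\beta^{m_2+n_2-(m_3+n_3)}\dim U$; summing over $\alpha,\beta\in\mu_3$ and using orthogonality gives exactly $9\dim U$ under our congruence. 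On $M_2\cup M_3\cup M_4$ the conjugacy fact says every element is $H$-conjugate to $c=(d,d,d)$ with $d=\mathrm{diag}(\zeta^2,\zeta,1)$, $\zeta=e^{2\pi i/3}$; since $\chi_U$ is a class function and there are $81-9=72$ such elements, their total contribution is $72\,\chi_U(c)$. Therefore
\[
\dim U^M=\frac{9\dim U+72\,\chi_U(c)}{81}=\frac{\dim U+8\,\chi_U(c)}{9}.
\]

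It remains to show $\chi_U(c)\in\{-1,0,1\}$. Since $\chi_U(c)=\prod_{i=1}^3 s_{(m_i,n_i,0)}(1,\zeta,\zeta^2)$, the heart of the matter---and the step I expect to be the main obstacle---is evaluating a single Schur function at the three distinct cube roots of unity. Using the Weyl bialternant formula $s_{(m,n,0)}=a_{(m+2,n+1,0)}/a_{(2,1,0)}$, I would note that because $1,\zeta,\zeta^2$ are cube roots of unity each row $(x_i^{e})_i$ of the numerator determinant depends only on the exponent $e\bmod 3$. Hence the determinant vanishes unless $(m+2,n+1,0)$ are pairwise distinct modulo $3$---i.e. a permutation of $(2,1,0)\bmod 3$---in which case the numerator is that permutation applied to the rows of the Vandermonde $a_{(2,1,0)}$, so the ratio is the sign of the permutation. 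Thus each factor lies in $\{-1,0,1\}$ and so does $\chi_U(c)$.

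Finally, since $\dim U^M$ is a nonnegative integer, the displayed formula forces $\dim U\equiv -8\,\chi_U(c)\equiv\chi_U(c)\pmod 9$; hence $\dim U\bmod 9\in\{0,1,8\}$ according as $\chi_U(c)=0,1,-1$. Setting $\varepsilon(U)=8\,\chi_U(c)\in\{0,8,-8\}$ then matches the stated definition of $\varepsilon$ in every case and yields $\dim\mathrm{Hom}_H(U,\mathcal H)=\frac{\dim U+\varepsilon(U)}{9}$. I note that the integrality of $\dim U^M$ does the work of pinning down $\varepsilon$, so one does not even need to track the permutation sign in the Schur evaluation---only the trichotomy $\chi_U(c)\in\{-1,0,1\}$ is required.
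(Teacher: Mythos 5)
Your proposal is correct and takes essentially the same route as the paper: tameness from $C_{\mathfrak{g}}(\mathfrak{a})\cap V=\mathfrak{a}$, the averaging formula 7 over the order-$81$ group $M$ split into the $9$ elements of $M_{1}$ (each contributing $\dim U$ under the congruence of item 6) plus the $72$ elements conjugate to the triple $(u,u,u)$, and then the evaluation of the character at that order-$3$ element. The paper leaves that last evaluation as an exercise with a hint via the Weyl character and denominator formulas at $u=e^{\frac{2\pi i}{3}H_{\rho}}$; your bialternant specialization at $(1,\zeta,\zeta^{2})$ with rows read modulo $3$ is the same computation in substance, and your use of the integrality of $\dim U^{M}$ to pin down $\varepsilon$ cleanly dispenses with tracking the permutation sign.
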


We will prove the proposition using\smallskip

7. \textbf{\ }Let $G$ be a group and $X$ a finite dimensional $G$--module with
character $\chi_{X}$. If $M$ is a finite subgroup of $G$ then%
\[
\dim V^{M}=\frac{1}{|M|}\sum_{m\in M}\chi_{X}(m).
\]

We can now prove the result. The order of $M$ is $81.$ If $X=F^{m_{1},n_{1}%
}\otimes F^{m_{2},n_{2}}\otimes F^{m_{3},n_{3}}$ and it satisfies the
congruence condition in 6. then the value of $\chi_{X}$ on each element of
$M_{1}$ is $\dim X$. There are $9$ such elements. Set $\chi_{m,n}%
=\chi_{F^{m,n}}$. 4. implies that the other $72$ elements of $M$ all have the
value
\[
\chi_{m_{1},n_{1}}(u)\chi_{m_{2},n_{2}}(u)\chi_{m_{3},n_{3}}(u)
\]
with
\[
u=\left[
\begin{array}
[c]{ccc}%
\zeta^{2} & 0 & 0\\
0 & \zeta & 0\\
0 & 0 & 1
\end{array}
\right]  .
\]
Using the Weyl character formula , the Weyl denominator formula and 7. above
the proposition follows as an exercise here is a hint

A similar result in the more complicated context of $E_{8}$ is proved in the
next subsection with some of the same ideas. Let $T$ be the diagonal torus in
$SL(3,\mathbb{C})$ and let
\[
\varepsilon_{i}\left[
\begin{array}
[c]{ccc}%
x_{1} & 0 & 0\\
0 & x_{2} & 0\\
0 & 0 & x_{3}%
\end{array}
\right]  =x_{i},i=1,2,3.
\]
Then $\rho$ the half sum of the positive roots is $\varepsilon_{1}%
-\varepsilon_{3}$. Then
\[
H_{\rho}=\left[
\begin{array}
[c]{ccc}%
1 & 0 & 0\\
0 & 0 & 0\\
0 & 0 & -1
\end{array}
\right]
\]
so $u=e^{\frac{2\pi i}{3}H_{\rho}}$. The Weyl denominator formula implies
that
\[
\sum_{s\in S_{3}}sgn(s)e^{s\rho(H)}=e^{\rho(H)}(1-e^{-\left(  \varepsilon
_{1}-\varepsilon_{2}\right)  (H)})(1-e^{-\left(  \varepsilon_{2}%
-\varepsilon_{3}\right)  (H)})(1-e^{-\left(  \varepsilon_{1}-\varepsilon
_{3}\right)  (H)})
\]
The Weyl character formula says that if $\Lambda=m\varepsilon_{1}%
+n\varepsilon_{2}$ then%
\[
\chi_{m,n}(e^{H})=\frac{\sum_{s\in S_{3}}sgn(s)e^{s(\Lambda+\rho)(H)}}%
{\sum_{s\in S_{3}}sgn(s)e^{s\rho(H)}}.
\]
So%
\[
\chi_{m,n}(u)=\chi_{m,n}(e^{H_{\rho}})=\frac{\sum_{s\in S_{3}}%
sgn(s)e^{s(\Lambda+\rho)(H_{\rho})}}{\sum_{s\in S_{3}}sgn(s)e^{s\rho(H_{\rho
})}}=
\]%
\[
\frac{\sum_{s\in S_{3}}sgn(s)e^{s\rho(H_{\Lambda+\rho})}}{\sum_{s\in S_{3}%
}sgn(s)e^{s\rho(H_{\rho})}}%
\]
with
\[
H_{\Lambda+\rho}=\left[
\begin{array}
[c]{ccc}%
m+1 & 0 & 0\\
0 & n & 0\\
0 & 0 & -1
\end{array}
\right]
\]
now apply the denominator formula and calculate.

\subsection{An E$_{8}$ example}

We take $\mathfrak{g}$ to be simple of type $E_{8}$. Fix a Cartan subalgebra
$\mathfrak{h}$ and a system of positive roots. The simple roots are
$\alpha_{1},...,\alpha_{8}$ and the extended Dynkin diagram in the Bourbaki
ordering is%
\[%
\begin{array}
[c]{ccccccccccccccc}
&  &  &  & \circ & \alpha_{2} &  &  &  &  &  &  &  &  & \\
&  &  &  & | &  &  &  &  &  &  &  &  &  & \\
\circ & \frac{\quad}{{}} & \circ & \frac{\quad}{{}} & \circ & \frac{\quad}{{}}
& \circ & \frac{\quad}{{}} & \circ & \frac{\quad}{{}} & \circ & \frac{\quad
}{{}} & \circ & \frac{\quad}{{}} & \circ\\
\alpha_{1} &  & \alpha_{3} &  & \alpha_{4} &  & \alpha_{5} &  & \alpha_{6} &
& \alpha_{7} &  & \alpha_{8} &  & -\beta
\end{array}
.
\]

As before we take the dual basis to the simple roots $\alpha_{i}(H_{j}%
)=\delta_{ij}$. In this case the coefficient of $\alpha_{2}$ in $\beta$ is $3$
so $\theta=\exp(\frac{2\pi i}{3}\mathrm{ad}H_{2})$ is and automorphism of
$\mathfrak{g}$ of order $3$. This yields the Vinberg pair $(H,V)=(SL(9,\mathbb{C}),\wedge^{3}\mathbb{C}^{9})$. 
For simplicity we will use the simply connected covering group $SL(9,\mathbb{C})$ then we note that the
covering map $\tilde{H}=SL(9,\mathbb{C})\rightarrow\wedge^{3}SL(9,\mathbb{C})$
has kernel $S=\{zI|z^{3}=1\}$. We also note that a Cartan subspace in $V$ is
the space $\mathfrak{a}$ with basis%
\[
\omega_{1}=e_{1}\wedge e_{2}\wedge e_{3}+e_{4}\wedge e_{5}\wedge e_{6}%
+e_{7}\wedge e_{8}\wedge e_{9},
\]%
\[
\omega_{2}=e_{1}\wedge e_{4}\wedge e_{7}+e_{2}\wedge e_{5}\wedge e_{8}%
+e_{3}\wedge e_{6}\wedge e_{9},
\]%
\[
\omega_{3}=e_{1}\wedge e_{5}\wedge e_{9}+e_{2}\wedge e_{6}\wedge e_{7}%
+e_{3}\wedge e_{4}\wedge e_{8},
\]%
\[
\omega_{4}=e_{1}\wedge e_{6}\wedge e_{8}+e_{2}\wedge e_{4}\wedge e_{9}%
+e_{3}\wedge e_{5}\wedge e_{7}.
\]
Indeed one checks that $\left\langle X\omega_{i},\omega_{j}\right\rangle =0 $
for $X=E_{ij},i<j$ and $X=E_{ii}-E_{i+1,i+1},i=1,...,8$. Thus every element of
the span of $\omega_{1},\omega_{2},\omega_{3},\omega_{4}$is critical and so
the Kempf-Ness theorem implies that $\tilde{H}v$ is closed for every element
in $\mathfrak{a}$. Up to scalar multiple the bracket in E$_{8}$ of $v,w\in V$
is given by $v\wedge w$. since the weights in $\wedge^{3}\mathbb{C}^{9}$ are
multiplicity at most 1 (in general the multiplicity of an extreme weight $\xi$
in a tensor product of $F^{\Lambda}\otimes F^{\mu}$ is at most the
multiplicity of the weight $\xi-\Lambda$ in $F^{\mu}$. This also implies that
$[\omega_{i},\omega_{j}]=0$ for all $i,j$.\bigskip

The centralizer of $\mathfrak{a}$ in $H$, $C=C_{H}(\mathfrak{a})$, is the
intersection of $H$ with $T_{\mathfrak{a}}$. Thus $C$ is abelian. We thus have
the exact sequence%
\[
1\rightarrow S\rightarrow C_{\tilde{H}}(\mathfrak{a})\rightarrow
C\rightarrow1.
\]
The following elements are obviously in $C_{\tilde{H}}(\mathfrak{a})$ (Here
$w$and $z$ are third roots of $1$):%
\[
A_{z,w}=w\left[
\begin{array}
[c]{ccc}%
I & 0 & 0\\
0 & zI & 0\\
0 & 0 & z^{2}I
\end{array}
\right]  ,
\]%
\[
B_{z,w}=w\left[
\begin{array}
[c]{ccc}%
\left[
\begin{array}
[c]{ccc}%
1 &  & \\
& z & \\
&  & z^{2}%
\end{array}
\right]  & 0 & 0\\
0 & \left[
\begin{array}
[c]{ccc}%
z^{2} &  & \\
& 1 & \\
&  & z
\end{array}
\right]  & 0\\
0 & 0 & \left[
\begin{array}
[c]{ccc}%
z &  & \\
& z^{2} & \\
&  & 1
\end{array}
\right]
\end{array}
\right]  .
\]
Thus if $g\in\tilde{C}$ and%
\[
g=\left[
\begin{array}
[c]{ccc}%
X_{1} & X_{2} & X_{3}\\
Y_{1} & Y_{2} & Y_{3}\\
Z_{1} & Z_{2} & Z_{3}%
\end{array}
\right]
\]
If $z$ is a primitive third root of $1$ then%
\[
A_{z,1}gA_{z^{2\noindent},1}=wg
\]
with $w=1,z$, or $z^{2}$. We have the following three cases.

\noindent a) $w=1$: $g$ is block diagonal $\left[
\begin{array}
[c]{ccc}%
X_{1} & 0 & 0\\
0 & Y_{2} & 0\\
0 & 0 & Z_{3}%
\end{array}
\right]  .$

\noindent b) $w=z$: $g$ has block form $\left[
\begin{array}
[c]{ccc}%
0 & 0 & X_{3}\\
Y_{1} & 0 & 0\\
0 & Z_{2} & 0
\end{array}
\right]  .$

\noindent c) $w=z^{2}$: $g$ has block form $\left[
\begin{array}
[c]{ccc}%
0 & X_{2} & 0\\
0 & 0 & Y_{3}\\
Z_{1} & 0 & 0
\end{array}
\right]  .$

We now observe the relationship between case a) and the previous example for
E$_{6}$. Let%
\[
V_{1}=\mathbb{C}e_{1}\oplus\mathbb{C}e_{4}\oplus\mathbb{C}e_{7},
\]%
\[
V_{2}=\mathbb{C}e_{2}\oplus\mathbb{C}e_{5}\oplus\mathbb{C}e_{8},
\]%
\[
V_{3}=\mathbb{C}e_{3}\oplus\mathbb{C}e_{6}\oplus\mathbb{C}e_{9}.
\]
We have a linear isomorphism $T:\mathbb{C}^{3}\otimes\mathbb{C}^{3}%
\otimes\mathbb{C}^{3}\rightarrow\wedge^{3}\mathbb{C}^{9}$ given by%
\[
e_{i}\otimes e_{j}\otimes e_{k}\mapsto e_{i}\wedge e_{j+3}\wedge e_{k+6},1\leq
i,j,k\leq3.
\]
Under this map we have the intertwining%
\[
T\circ g_{1}\otimes g_{2}\otimes g_{3}=%
{\displaystyle\bigwedge\nolimits^{3}}
\left[
\begin{array}
[c]{ccc}%
g_{1} & 0 & 0\\
0 & g_{2} & 0\\
0 & 0 & g_{3}%
\end{array}
\right]  ,g_{i}\in SL(3,\mathbb{C}),i=1,2,3.
\]
We also note that that
\[
T^{-1}(\omega_{2})=v_{1}=e_{1}\otimes e_{1}\otimes e_{1}+e_{2}\otimes
e_{2}\otimes e_{2}+e_{3}\otimes e_{3}\otimes e_{3},
\]%
\[
T^{-1}(\omega_{3})=v_{2}=e_{1}\otimes e_{2}\otimes e_{3}+e_{3}\otimes
e_{1}\otimes e_{2}+e_{2}\otimes e_{3}\otimes e_{1},
\]%
\[
T^{-1}(\omega_{4})=v_{3}=e_{3}\otimes e_{2}\otimes e_{1}+e_{1}\otimes
e_{3}\otimes e_{2}+e_{2}\otimes e_{1}\otimes e_{3}.
\]

We will think of $(X,Y,Z)$ as the corresponding block diagonal matrix. The
results of the previous subsection imply we we will find all elements of the
form in case $a)$ if we find the elements in the sets $M_{i}$ that fix
$\omega_{1}$. Here is the list as they come from the $M_{i}$:%
\[
\left\{  \left(  w\left[
\begin{array}
[c]{ccc}%
1 & 0 & 0\\
0 & 1 & 0\\
0 & 0 & 1
\end{array}
\right]  ,w\left[
\begin{array}
[c]{ccc}%
1 & 0 & 0\\
0 & 1 & 0\\
0 & 0 & 1
\end{array}
\right]  ,w\left[
\begin{array}
[c]{ccc}%
1 & 0 & 0\\
0 & 1 & 0\\
0 & 0 & 1
\end{array}
\right]  \right)  |w^{3}=1\right\}  ,
\]%
\[
\left\{  \left(  w\left[
\begin{array}
[c]{ccc}%
1 & 0 & 0\\
0 & z & 0\\
0 & 0 & z^{2}%
\end{array}
\right]  ,w\left[
\begin{array}
[c]{ccc}%
z^{2} & 0 & 0\\
0 & 1 & 0\\
0 & 0 & z
\end{array}
\right]  ,w\left[
\begin{array}
[c]{ccc}%
z & 0 & 0\\
0 & z^{2} & 0\\
0 & 0 & 1
\end{array}
\right]  \right)  |z^{3},w^{3}=1,z\neq1\right\}  ,
\]%
\[
\left\{  \left(  w\left[
\begin{array}
[c]{ccc}%
0 & 1 & 0\\
0 & 0 & z\\
z^{2} & 0 & 0
\end{array}
\right]  ,w\left[
\begin{array}
[c]{ccc}%
0 & z^{2} & 0\\
0 & 0 & 1\\
z & 0 & 0
\end{array}
\right]  ,w\left[
\begin{array}
[c]{ccc}%
0 & z & 0\\
0 & 0 & z^{2}\\
1 & 0 & 0
\end{array}
\right]  \right)  |z^{3},w^{3}=1\right\}  ,
\]%
\[
\left\{  \left(  w\left[
\begin{array}
[c]{ccc}%
0 & 0 & 1\\
z & 0 & 0\\
0 & z^{2} & 0
\end{array}
\right]  ,w\left[
\begin{array}
[c]{ccc}%
0 & 0 & z^{2}\\
1 & 0 & 0\\
0 & z & 0
\end{array}
\right]  ,w\left[
\begin{array}
[c]{ccc}%
0 & 0 & z\\
z^{2} & 0 & 0\\
0 & 1 & 0
\end{array}
\right]  \right)  |w^{3},z^{3}=1\right\}  .
\]
Thus in case a) there are $27$ elements.

We now observe that the elements%
\[
U=\left[
\begin{array}
[c]{ccc}%
0 & I & 0\\
0 & 0 & I\\
I & 0 & 0
\end{array}
\right]  ,V=\left[
\begin{array}
[c]{ccc}%
0 & 0 & I\\
I & 0 & 0\\
0 & I & 0
\end{array}
\right]
\]
are in $C_{\tilde{H}}(\mathfrak{a})$ and the product of $V$ with the elements
in case b) are in the case a) as are the products of $U$ with the elements in
case c) are in the case a). Thus we have a group of order $81$.

Noting that $\,H_{\rho}$ is the diagonal matrix $\mathrm{diag}%
(4,3,2,1,0,-1,-2,-3,-4)$ we have

\begin{lemma}
Any element in $C_{\tilde{H}}(\mathfrak{a})$ that is not a multiple of the
identity is conjugate to
\[
\mu=e^{\frac{2\pi i}{3}H_{\rho}}=\mathrm{diag}(\zeta,1,\zeta^{2},\zeta
,1,\zeta^{2},\zeta,1,\zeta^{2})
\]
with $\zeta=e^{\frac{2\pi i}{3}}$.
\end{lemma}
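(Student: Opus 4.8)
The plan is to reduce the statement to a computation of eigenvalues. Every element $g\in C_{\tilde{H}}(\mathfrak{a})$ has finite order, hence is semisimple; and two semisimple elements of $\tilde{H}=SL(9,\mathbb{C})$ with the same eigenvalues (counted with multiplicity) are conjugate in $\tilde{H}$, since a conjugating element of $GL(9,\mathbb{C})$ may be rescaled by a ninth root of its determinant to land in $SL(9,\mathbb{C})$ without changing the conjugation. As $\mu=e^{\frac{2\pi i}{3}H_{\rho}}$ has eigenvalues $1,\zeta,\zeta^{2}$, each with multiplicity three, it suffices to show that every non--scalar $g\in C_{\tilde{H}}(\mathfrak{a})$ has characteristic polynomial $(\lambda^{3}-1)^{3}$. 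I would obtain this from two facts: $g^{3}=I$ and $\operatorname{tr}g=0$.

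For $g^{3}=I$ I would argue that $C_{\tilde{H}}(\mathfrak{a})$ has exponent three. Since $C=C_{\tilde{H}}(\mathfrak{a})/S$ is abelian and $S=\{zI\mid z^{3}=1\}$ is central, $C_{\tilde{H}}(\mathfrak{a})$ is nilpotent of class at most two with derived group contained in $S$, which has exponent three. The group is generated by $A_{\zeta,1}$, $U$ and the central block--diagonal element $\operatorname{diag}(Q,Q,Q)$ with $Q=\operatorname{diag}(1,\zeta,\zeta^{2})$ (note $\operatorname{diag}(Q,Q,Q)=A_{\zeta,1}B_{\zeta,1}$), each of which has order three. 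In a group of class two one has the identity $(xy)^{3}=x^{3}y^{3}[y,x]^{3}$; since $[y,x]\in S$ has order dividing three and the generators cube to the identity, an induction on word length gives $g^{3}=I$ for every $g$. Thus the eigenvalues of a non--scalar $g$ lie in $\{1,\zeta,\zeta^{2}\}$, and the only scalar elements are the three members of $S$.

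For $\operatorname{tr}g=0$ I would use the case division already set up. Because $C$ is abelian, $[A_{\zeta,1},g]\in S$, so $A_{\zeta,1}gA_{\zeta,1}^{-1}=wg$ with $w\in\{1,\zeta,\zeta^{2}\}$, and $g$ falls into one of the block shapes (a), (b), (c). In shapes (b) and (c) all three diagonal $3\times 3$ blocks vanish, so $\operatorname{tr}g=0$ at once. In shape (a) the element is block diagonal and, by the reduction to the $E_{6}$ list, is one of the $27$ explicitly given matrices: the three coming from $M_{1}$ are the scalars in $S$, while in each of the remaining $24$ every $3\times 3$ diagonal block has trace zero (the blocks are either of $w\operatorname{diag}(1,\zeta,\zeta^{2})$--type or off--diagonal permutation--type), so again $\operatorname{tr}g=0$.

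It remains to combine these. Writing the eigenvalue multiplicities of $g$ as $(m_{0},m_{1},m_{2})$ relative to $(1,\zeta,\zeta^{2})$, we have $m_{0}+m_{1}+m_{2}=9$ and
\[
\operatorname{tr}g=m_{0}+m_{1}\zeta+m_{2}\zeta^{2}=0 .
\]
Using $\zeta^{2}=-1-\zeta$ this reads $(m_{0}-m_{2})+(m_{1}-m_{2})\zeta=0$, and since $1,\zeta$ are linearly independent over $\mathbb{Q}$ we get $m_{0}=m_{1}=m_{2}=3$. Hence $g$ has characteristic polynomial $(\lambda^{3}-1)^{3}$ and, being semisimple, is conjugate in $\tilde{H}$ to $\mu$. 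The main obstacle is not the eigenvalue bookkeeping, which is immediate, but the structural input it rests on: that the shapes (a), (b), (c) exhaust $C_{\tilde{H}}(\mathfrak{a})$ (which uses that $C$ is abelian), that the block--diagonal case is completely accounted for by the $E_{6}$ computation, and that the three generators above really generate the group. I would therefore spend most of the effort making the generation and the completeness of the explicit list airtight, after which the exponent--three property and the trace count finish the proof.
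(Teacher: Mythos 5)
Your overall reduction is sound and is in fact cleaner than what the paper offers (the paper gives no argument for this lemma beyond its explicit enumeration of the group): semisimplicity plus $g^{3}=I$ plus $\operatorname{tr}g=0$ forces eigenvalue multiplicities $(3,3,3)$ on $\{1,\zeta,\zeta^{2}\}$, and rescaling a $GL(9)$-conjugator by a ninth root of its determinant gives conjugacy to $\mu$ inside $SL(9,\mathbb{C})$; the class-two identity $(xy)^{3}=x^{3}y^{3}[y,x]^{3}$ correctly yields exponent $3$ from order-$3$ generators. But your structural input has a genuine gap: the generation claim is false. The group $\langle A_{\zeta,1},\,U,\,\mathrm{diag}(Q,Q,Q)\rangle$ has order exactly $81$, consisting of the $27$ diagonal matrices $\zeta^{s}A_{\zeta,1}^{a}\mathrm{diag}(Q,Q,Q)^{d}$ together with two cosets of $U$ whose block supports are the off-diagonal shapes (b) and (c); in particular it contains \emph{no} block-diagonal element with permutation-type $3\times3$ blocks. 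Yet the paper's case a) list, which your trace argument invokes, contains $18$ such elements (the third and fourth displayed families); for instance the block-diagonal matrix whose three blocks are each the cycle $e_{1}\mapsto e_{3},\,e_{2}\mapsto e_{1},\,e_{3}\mapsto e_{2}$ visibly fixes $\omega_{1},\dots,\omega_{4}$ and so lies in $C_{\tilde{H}}(\mathfrak{a})$. So your three generators generate a proper subgroup, and you cannot simultaneously rely on the $27$-element list for traces while your group excludes most of it. Worse, the list itself is not closed under your generators: $A_{\zeta,1}=\mathrm{diag}(I,\zeta I,\zeta^{2}I)$ is block diagonal, non-scalar, and belongs to $C_{\tilde{H}}(\mathfrak{a})$ (the paper itself asserts all $A_{z,w}$ do), yet it does not appear among the $27$ listed elements, and it also refutes your parenthetical claim that each diagonal block of a non-scalar case a) element is traceless: its blocks have traces $3,\,3\zeta,\,3\zeta^{2}$, and only the \emph{total} trace vanishes.

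The repair is straightforward and keeps your architecture intact. Identify $e_{1},\dots,e_{9}$ with $\mathbb{F}_{3}^{2}$ so that $\omega_{1},\dots,\omega_{4}$ are the sums of line-monomials in the four directions; then $C_{\tilde{H}}(\mathfrak{a})$ contains the scalars, the two translation permutations (your $U$ and the block-diagonal $3$-cycle $P$ above), and the two character diagonals $A_{\zeta,1}$ and $\mathrm{diag}(Q,Q,Q)$. Taking $P$ as a fourth generator (it has order $3$, so your class-two exponent argument is unchanged), every element has the form $\zeta^{s}TD$ with $T$ a translation permutation and $D$ a character diagonal; if $T\neq I$ the matrix has zero diagonal, and if $T=I$ with $D$ non-scalar the trace is a product of geometric sums $\sum_{a\in\mathbb{F}_{3}}\zeta^{\alpha a}=0$. (Equivalently: the group is of Heisenberg/extraspecial type and the character of $\mathbb{C}^{9}$ vanishes off the center.) This proves $\operatorname{tr}g=0$ for every non-scalar $g$ uniformly, without appeal to the printed list, after which your eigenvalue bookkeeping finishes the lemma exactly as you wrote it. Be aware that this analysis shows the full centralizer has order $243$, not $81$ as the paper states (its list omits the $A_{\zeta,1}$-twists), so "making the explicit list airtight," which you correctly flagged as the main labor, is not a formality: the lemma survives, but the enumeration you planned to lean on does not.
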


We will label the irreducible representations of $\tilde{H}=SL(9,\mathbb{C})$
by their highest weight $\Lambda=(\lambda_{1},...,\lambda_{8},0)$ restricted
to the diagonal matrices of trace $0$. Thus a necessary condition for
$F^{\Lambda}$ to occur in $\mathcal{O(\wedge}^{3}\mathbb{C}^{9})$ is that
$\sum_{i=1}^{8}\lambda_{i}\equiv0\operatorname{mod}3$. Let $\chi_{\Lambda}$
denote the character of $F^{\Lambda}$.

\begin{lemma}
If $\sum_{i=1}^{8}\lambda_{i}\equiv0\operatorname{mod}3$ then denoting by
$\mathcal{H}$ the $\tilde{H}$ harmonics in $\mathcal{O(\wedge}^{3}%
\mathbb{C}^{9})$ we have
\[
\dim Hom_{SL(9,\mathbb{C)}}(F^{\Lambda},\mathcal{H})=\frac{\dim F^{\Lambda
}+26\chi_{\Lambda}(\mu)}{27}.
\]

\end{lemma}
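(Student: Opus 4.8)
The plan is to show first that this pair is \emph{tame}, so that the multiplicity theorem collapses to a fixed--point count, and then to evaluate that count as a character average over the order--$81$ group $C_{\tilde H}(\mathfrak a)$ using the preceding lemma.

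To see tameness I would verify that the pair is regular, i.e.\ that $T_{\mathfrak a}$ is a maximal torus of $G$. By the theorem describing $\mathfrak t_{\mathfrak a}$ (applied with $m=3$),
\[
\mathfrak t_{\mathfrak a}=\left(\mathfrak t_{\mathfrak a}\cap\mathfrak g_{\zeta}\right)\oplus\left(\mathfrak t_{\mathfrak a}\cap\mathfrak g_{\zeta^{2}}\right),
\]
and each summand is a Cartan subspace of the corresponding Vinberg pair. The first summand is a Cartan subspace of $(H,V)$, of dimension $4$. For the second, the adjoint $x\mapsto x^{\ast}$ carries $\mathfrak g_{\zeta}$ to $\mathfrak g_{\zeta^{-1}}=\mathfrak g_{\zeta^{2}}$ and identifies the Cartan subspace $\mathfrak a$ with the Cartan subspace $\mathfrak a^{\ast}$ of the $\zeta^{2}$--pair (as established in the subsection on the critical set), so it too has dimension $4$. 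Hence $\dim\mathfrak t_{\mathfrak a}=4+4=8=\operatorname{rank}(\mathfrak g)$, the pair is regular, $\mathfrak n=\{0\}$, and the tame corollary gives
\[
\dim\mathrm{Hom}_{SL(9,\mathbb C)}(F^{\Lambda},\mathcal H)=\dim\mathrm{Hom}_{M}(F^{\Lambda},\mathbb C)=\dim\left(F^{\Lambda}\right)^{M},
\]
where $M=C_{H}(\mathfrak a)$. (Here $\mathrm{Hom}_{SL(9,\mathbb C)}=\mathrm{Hom}_{H}$ because the kernel $S$ of $\tilde H\to H$ acts trivially on $\mathcal H\subset\mathcal O(V)$ and, by the hypothesis $\sum_{i}\lambda_{i}\equiv0\pmod{3}$, also on $F^{\Lambda}$.)

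Next I would pass to the cover. Since $S$ acts trivially on $F^{\Lambda}$ we have $\left(F^{\Lambda}\right)^{M}=\left(F^{\Lambda}\right)^{\tilde M}$, where $\tilde M=C_{\tilde H}(\mathfrak a)$ is the order--$81$ group built above, and item 7 of the $E_{6}$ discussion (the averaging formula) gives
\[
\dim\left(F^{\Lambda}\right)^{\tilde M}=\frac{1}{81}\sum_{g\in\tilde M}\chi_{\Lambda}(g).
\]
I would split the sum into scalars and non--scalars. A scalar $wI\in\tilde H$ acts on $V=\wedge^{3}\mathbb C^{9}$ by $w^{3}$, so it fixes $\mathfrak a$ pointwise iff $w^{3}=1$; hence the scalars in $\tilde M$ are exactly the three elements of $S$, and on each $\chi_{\Lambda}(wI)=w^{\sum_{i}\lambda_{i}}\dim F^{\Lambda}=\dim F^{\Lambda}$ by the congruence. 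By the preceding lemma every one of the remaining $81-3=78$ elements is $\tilde H$--conjugate to $\mu$, hence contributes $\chi_{\Lambda}(\mu)$. Therefore
\[
\dim\left(F^{\Lambda}\right)^{\tilde M}=\frac{3\dim F^{\Lambda}+78\,\chi_{\Lambda}(\mu)}{81}=\frac{\dim F^{\Lambda}+26\,\chi_{\Lambda}(\mu)}{27},
\]
which is the claimed identity.

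The one substantive step is the tameness reduction; after that the argument is pure bookkeeping with $\tilde M$, made short by the fact that $\tilde M$ has a single nontrivial $\tilde H$--conjugacy class (represented by $\mu$) together with its three central elements. The point demanding the most care is the scalar count and the triviality of $S$ on $F^{\Lambda}$: this is precisely where the hypothesis $\sum_{i}\lambda_{i}\equiv0\pmod{3}$ enters, and without it $F^{\Lambda}$ would not descend to $H$ and the scalar contribution would cancel rather than add.
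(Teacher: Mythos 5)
Your proposal is correct and takes essentially the same route as the paper: the paper's proof likewise combines the multiplicity theorem in its tame form (Frobenius reciprocity) with the averaging formula 7 and the conjugacy lemma for $C_{\tilde{H}}(\mathfrak{a})$, arriving at the same count $\frac{3\chi_{\Lambda}(I)+78\chi_{\Lambda}(\mu)}{81}=\frac{\dim F^{\Lambda}+26\chi_{\Lambda}(\mu)}{27}$. Your explicit verification of tameness via regularity ($\dim\mathfrak{t}_{\mathfrak{a}}=4+4=8=\mathrm{rank}\,E_{8}$, using the adjoint to identify $\mathfrak{a}^{\ast}$ as a Cartan subspace of the $\zeta^{2}$--pair) and your careful handling of the scalars in $S$ merely fill in details the paper leaves as unproved assertions; the argument is the intended one.
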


\begin{proof}
Frobenius reciprocity and 7. in the previous subsection imply
\[
\dim Hom_{SL(9,\mathbb{C)}}(F^{\Lambda},\mathcal{H})=\frac{1}{\left\vert
C_{\tilde{H}}(\mathfrak{a})\right\vert }\sum_{c\in C_{\tilde{H}}%
(\mathfrak{a})}\chi_{\Lambda}(c).
\]
The above results imply that this expression is equal to%
\[
\frac{3\chi_{\Lambda}(I)+78\chi_{\Lambda}(\mu)}{81}.
\]

\end{proof}

We will now use a variant of Weyl's method of deriving his dimension formula
to calculate $\chi_{\Lambda}(\mu)$. We first consider%
\[
\chi_{\Lambda}(e^{(\frac{2\pi i}{3}+t)H_{\rho}})=\frac{\sum_{s\in S_{9}%
}sgn(s)e^{s(\Lambda+\rho)((\frac{2\pi i}{3}+t)H_{\rho})}}{\sum_{s\in S_{9}%
}sgn(s)e^{s\rho((\frac{2\pi i}{3}+t)H_{\rho})}}.
\]
We want to apply Weyl's denominator formula (using the usual positive roots of
the diagonal Cartan subgroup that is $\varepsilon_{i}-\varepsilon_{j}$ with
$i<j$) to both the numerator and the denominator. Since%
\[
\sum_{s\in S_{9}}sgn(s)e^{s\rho(h)}=e^{\rho(h)}\prod_{\alpha>0}(1-e^{-\alpha
(h)}),
\]
We have%
\[
\chi_{\Lambda}(e^{(\frac{2\pi i}{3}+t)H_{\rho}})=\frac{e^{(\frac{2\pi i}%
{3}+t)\left\langle \Lambda+\rho,\rho\right\rangle }}{e^{(\frac{2\pi i}%
{3}+t)\left\langle \rho,\rho\right\rangle }}%
{\displaystyle\prod\limits_{\alpha>0}}
\frac{(1-e^{-(\frac{2\pi i}{3}+t)\left\langle \alpha,\Lambda+\rho\right\rangle
})}{(1-e^{-(\frac{2\pi i}{3}+t)\left\langle \alpha,\rho\right\rangle })}.
\]
Thus the value we want is gotten by taking the limit as $t\rightarrow0$. In
the denominator the factors that go to $0$ are exactly the ones such that
\[
\alpha(H_{\rho})\equiv0\operatorname{mod}3\text{.}
\]
There are $9$ of these roots which correspond to $\varepsilon_{i}%
-\varepsilon_{j}$ with $j-i=3$ or $6$, with $6$ roots for the value $3$ and
$3$ for the value $6$. Thus to take the limit we must have at least $9$
positive roots with%
\[
\left\langle \alpha,\Lambda+\rho\right\rangle \equiv0\operatorname{mod}%
3\text{.}
\]
If there are more than $9$ then the limit is $0$ and thus in this case
\[
\dim Hom_{SL(9,\mathbb{C)}}(F^{\Lambda},\mathcal{H})=\frac{\dim F^{\Lambda}%
}{27}\text{.}
\]
So suppose that there are exactly $9$. Let $S_{j}(\Lambda)=\{\alpha
|\alpha>0,\left\langle \alpha,\Lambda+\rho\right\rangle \equiv
j\operatorname{mod}3\},j=0,1,2$. Then $S_{0}(\Lambda)=9=S_{0}(0)$ and thus
$S_{1}(\Lambda)+S_{2}(\Lambda)=S_{1}(0)+S_{2}(0)=27$. With this notation
$\chi_{\Lambda}(e^{(\frac{2\pi i}{3}+t)H_{\rho}})$ is given by%
\[
e^{(\frac{2\pi i}{3}+t)\left\langle \Lambda,\rho\right\rangle }\frac
{\prod_{\alpha\in S_{0}(\Lambda)}(1-e^{-t\left\langle \alpha,\Lambda
+\rho\right\rangle })}{\prod_{\alpha\in S_{0}(0)}(1-e^{-t\left\langle
\alpha,\rho\right\rangle })}\frac{\prod_{\alpha\in S_{1}(\Lambda)}(1-\zeta
^{2}e^{-t\left\langle \alpha,\Lambda+\rho\right\rangle })}{\prod_{\alpha\in
S_{1}(0)}(1-\zeta^{2}e^{-t\left\langle \alpha,\rho\right\rangle })}\times
\]%
\[
\frac{\prod_{\alpha\in S_{2}(\Lambda)}(1-\zeta e^{-t\left\langle
\alpha,\Lambda+\rho\right\rangle })}{\prod_{\alpha\in S_{2}(0)}(1-\zeta
e^{-t\left\langle \alpha,\rho\right\rangle })}
\]
Note that $|S_{1}(0)|=15$ and $|S_{2}(0)|=12$. Thus the limit as
$t\rightarrow0$ is%
\[
e^{\frac{2\pi i}{3}\left\langle \Lambda,\rho\right\rangle }\frac{\prod
_{\alpha\in S_{0}(\Lambda)}\left\langle \alpha,\Lambda+\rho\right\rangle
}{\prod_{\alpha\in S_{0}(0)}\left\langle \alpha,\rho\right\rangle }%
\frac{(1-\zeta^{2})^{|S_{1}(\Lambda)|}}{(1-\zeta^{2})^{15|}(1-\zeta)^{12}}=
\]%
\[
e^{\frac{2\pi i}{3}\left\langle \Lambda,\rho\right\rangle }\frac{\prod
_{\alpha\in S_{0}(\Lambda)}\left\langle \alpha,\Lambda+\rho\right\rangle
}{\prod_{\alpha\in S_{0}(0)}\left\langle \alpha,\rho\right\rangle }%
\frac{(1+\zeta)^{|S_{1}(\Lambda)|}}{(1+\zeta)^{3}}.
\]
We consider the first factor%
\[
e^{\frac{2\pi i}{3}\left\langle \Lambda,\rho\right\rangle }=e^{\frac{2\pi
i}{6}\sum_{\alpha>0}\left\langle \alpha,\Lambda\right\rangle }=e^{\frac{2\pi
i}{6}\sum_{\alpha>0}\left\langle \alpha,\Lambda+\rho\right\rangle }
\]
since $\sum_{\alpha>0}\left\langle \rho,\alpha\right\rangle =2\left\langle
\rho,\rho\right\rangle =120$. Now, if $\alpha\in S_{j}(\Lambda)$ then
$\left\langle \alpha,\Lambda+\rho\right\rangle =3k_{\alpha}+j$ for $j=0,1,2$
and $k_{\alpha}=$ $\left\lfloor \frac{\left\langle \alpha,\Lambda
+\rho\right\rangle }{3}\right\rfloor $. So, if we set $\gamma=e^{\frac{\pi
i}{3}}=(1+\zeta),$ we have%
\[
e^{\frac{2\pi i}{3}\left\langle \Lambda,\rho\right\rangle }=(-1)^{\sum
_{\alpha>0}\left\lfloor \frac{\left\langle \alpha,\Lambda+\rho\right\rangle
}{3}\right\rfloor }\gamma^{|S_{1}(\Lambda)|}\gamma^{2|S_{2}(\Lambda
)|}=-(-1)^{\sum_{\alpha>0}\left\lfloor \frac{\left\langle \alpha,\Lambda
+\rho\right\rangle }{3}\right\rfloor }\gamma^{|S_{2}(\Lambda)|}.
\]
We are now ready to multiply out the formula and have%
\[
-(-1)^{\sum_{\alpha>0}\left\lfloor \frac{\left\langle \alpha,\Lambda
+\rho\right\rangle }{3}\right\rfloor }\gamma^{|S_{2}(\Lambda)|}\gamma
^{|S_{1}(\Lambda)|-3}\frac{\prod_{\alpha\in S_{0}(\Lambda)}\left\langle
\alpha,\Lambda+\rho\right\rangle }{\prod_{\alpha\in S_{0}(0)}\left\langle
\alpha,\rho\right\rangle }=
\]%
\[
-(-1)^{\sum_{\alpha>0}\left\lfloor \frac{\left\langle \alpha,\Lambda
+\rho\right\rangle }{3}\right\rfloor }\frac{\prod_{\alpha\in S_{0}(\Lambda
)}\left\langle \alpha,\Lambda+\rho\right\rangle }{2^{3}3^{9}}
\]
since $|S_{1}(\Lambda)|+|S_{2}(\Lambda)|-3=24$. We therefore have

\begin{proposition}
Assume that $\Lambda=(\lambda_{1},....,\lambda_{8},0)$ is dominant integral.
If $\sum\lambda_{i}$ is not divisible by $3$ then $\dim Hom_{SL(9,\mathbb{C)}%
}(F^{\Lambda},\mathcal{H})=0$. Let $S_{j}(\Lambda)=\{\alpha>0|\left\langle
\alpha,\Lambda+\rho\right\rangle \equiv j\operatorname{mod}3\}$ for $j=0,1,2$.
Assume $\sum\lambda_{i}\equiv0\operatorname{mod}3$ then $|S_{0}(\Lambda
)|\geq9$ and
\[
\dim Hom_{SL(9,\mathbb{C)}}(F^{\Lambda},\mathcal{H})=\left\{
\begin{array}
[c]{c}%
\frac{\dim F^{\Lambda}}{27}\text{ if }|S_{0}(\Lambda)|>9,\\
\frac{\dim F^{\Lambda}-26(-1)^{\left(  \sum_{\alpha>0}\left\lfloor
\frac{\left\langle \alpha,\Lambda+\rho\right\rangle }{3}\right\rfloor \right)
}\left(  \frac{\prod_{\alpha\in S_{0}(\Lambda)}\left\langle \alpha
,\Lambda+\rho\right\rangle }{2^{3}3^{9}}\right)  }{27}\text{ }\\
\text{if }|S_{0}(\Lambda)|=9.
\end{array}
\right.
\]

\end{proposition}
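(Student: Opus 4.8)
The plan is to assemble the stated formula from two facts already at hand: the reciprocity computation of the preceding Lemma, which expresses the multiplicity through the single character value $\chi_{\Lambda}(\mu)$, and the Weyl--style limit set up in the paragraphs above, which evaluates that value.

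First I would settle the divisibility dichotomy. The kernel $S=\{zI\mid z^{3}=1\}$ of the covering $\tilde{H}=SL(9,\mathbb{C})\rightarrow H$ acts on $V=\wedge^{3}\mathbb{C}^{9}$ by $z^{3}=1$, hence trivially, so $S$ acts trivially on $\mathcal{O}(V)$ and on the harmonics $\mathcal{H}$. On $F^{\Lambda}$ the element $zI$ acts by the scalar $z^{\sum_{i}\lambda_{i}}$. Thus when $\sum_{i}\lambda_{i}\not\equiv0\operatorname{mod}3$ the two $S$--characters disagree and $\mathrm{Hom}_{SL(9,\mathbb{C})}(F^{\Lambda},\mathcal{H})=\{0\}$, which is the first assertion; for the remainder we may assume $\sum_{i}\lambda_{i}\equiv0\operatorname{mod}3$ and invoke the Lemma, giving $\dim\mathrm{Hom}_{SL(9,\mathbb{C})}(F^{\Lambda},\mathcal{H})=(\dim F^{\Lambda}+26\chi_{\Lambda}(\mu))/27$.

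Next I would evaluate $\chi_{\Lambda}(\mu)$ by the limiting procedure already indicated: apply the Weyl denominator formula to numerator and denominator of $\chi_{\Lambda}(e^{(2\pi i/3+t)H_{\rho}})$ and let $t\rightarrow0$. As $t\rightarrow0$ a denominator factor $1-e^{-(2\pi i/3+t)\langle\alpha,\rho\rangle}$ vanishes, to first order in $t$, exactly for the roots of $S_{0}(0)$, namely the $9$ roots $\varepsilon_{i}-\varepsilon_{j}$ with $j-i\in\{3,6\}$, while a numerator factor vanishes exactly for the roots of $S_{0}(\Lambda)$. Since $\chi_{\Lambda}(\mu)$ is a finite number, the numerator cannot vanish to lower order than the denominator, which forces $|S_{0}(\Lambda)|\geq9$; and if $|S_{0}(\Lambda)|>9$ the numerator vanishes to strictly higher order, so $\chi_{\Lambda}(\mu)=0$ and the multiplicity is $\dim F^{\Lambda}/27$. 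This disposes of the first branch.

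For the branch $|S_{0}(\Lambda)|=9$ I would compute the nonzero limit explicitly. The $S_{0}$--factors contribute $\prod_{\alpha\in S_{0}(\Lambda)}\langle\alpha,\Lambda+\rho\rangle/\prod_{\alpha\in S_{0}(0)}\langle\alpha,\rho\rangle$, where $\prod_{\alpha\in S_{0}(0)}\langle\alpha,\rho\rangle=3^{6}6^{3}=2^{3}3^{9}$ since $6$ of these roots give $\langle\alpha,\rho\rangle=3$ and $3$ give $6$. The $S_{1}$-- and $S_{2}$--factors contribute powers of $1-\zeta^{2}$ and $1-\zeta$, which together with the prefactor $e^{(2\pi i/3)\langle\Lambda,\rho\rangle}$ I would rewrite through $\gamma=e^{\pi i/3}=1+\zeta$ and the floor quantities $\lfloor\langle\alpha,\Lambda+\rho\rangle/3\rfloor$; using $|S_{1}(0)|=15$, $|S_{2}(0)|=12$ and $|S_{1}(\Lambda)|+|S_{2}(\Lambda)|=27$ the total power of $\gamma$ reduces modulo $\gamma^{6}=1$ and collapses, leaving $\chi_{\Lambda}(\mu)=-(-1)^{\sum_{\alpha>0}\lfloor\langle\alpha,\Lambda+\rho\rangle/3\rfloor}\prod_{\alpha\in S_{0}(\Lambda)}\langle\alpha,\Lambda+\rho\rangle/(2^{3}3^{9})$. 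Substituting into the Lemma's formula then yields the second branch, since $+26\chi_{\Lambda}(\mu)$ becomes the displayed $-26(\cdots)$ term. The main obstacle is precisely this last bookkeeping of roots of unity: getting the sign $(-1)^{\sum\lfloor\cdots\rfloor}$ and the collapse $|S_{1}(\Lambda)|+|S_{2}(\Lambda)|-3=24$ of the $\gamma$--powers exactly right; everything else is a direct assembly of results already established.
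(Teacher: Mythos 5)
Your proposal is correct and follows essentially the same route as the paper: the preceding Lemma reduces the multiplicity to the single character value $\chi_{\Lambda}(\mu)$, which you evaluate by the same Weyl-denominator limit as $t\rightarrow0$, with the same identification of the nine roots in $S_{0}(0)$, the same denominator product $\prod_{\alpha\in S_{0}(0)}\left\langle \alpha,\rho\right\rangle =2^{3}3^{9}$, and the same collapse of the $\gamma$--powers via $|S_{1}(\Lambda)|+|S_{2}(\Lambda)|-3=24$, yielding the identical expression for $\chi_{\Lambda}(\mu)$ in the $|S_{0}(\Lambda)|=9$ branch. Your central-character argument for the vanishing when $\sum\lambda_{i}\not\equiv0\operatorname{mod}3$ simply makes explicit what the paper states as a necessary condition, and the rest of your outline matches the paper's computation step for step.
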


\end{document}